\newcommand{\M}{\mathfrak M}
\newcommand{\nn}{\|\hspace{-0.45mm}|}
\newcommand{\cno}{\textcircled{\small 1}}
\newcommand{\cnt}{\textcircled{\small 2}}
\newcommand{\suppm}{\operatorname{supp}}
\newcommand{\suppmj}{\operatorname{supp}(\mu_j)}
\newcommand{\suppmthe}{\operatorname{supp}(\mu_\theta)}
\newcommand{\wbo}{\widetilde{B_1}}
\newcommand{\wbt}{\widetilde{B_2}}
\newcommand{\wbj}{\widetilde{B_j}}
\newcommand{\wb}{\widetilde{B}}
\newcommand{\Lip}{\operatorname{Lip}}
\newcommand{\lip}{\operatorname{lip}_{\alpha}}
\newcommand{\Log}{\operatorname{Log}}
\newcommand{\ch}{\operatorname{Ch}}
\newtheorem{theorem}{Theorem}
\newtheorem{lemma}[theorem]{Lemma}
\newtheorem{cor}[theorem]{Corollary}
\newtheorem{prop}[theorem]{Proposition}
\theoremstyle{definition}
\newtheorem{definition}[theorem]{Definition}
\newtheorem{example}[theorem]{Example}
\theoremstyle{remark}
\begin{document}
\author{
Osamu~Hatori
}
\address{
Department of Mathematics, Faculty of Science,
Niigata University, Niigata 950-2181, Japan
}
\email{hatori@math.sc.niigata-u.ac.jp
}

\author{
Shiho Oi
}
\address{
Niigata Prefectural Nagaoka High School, 3-14-1 Gakko-cho, Nagaoka City, Niigata Prefecture 940-0041, Japan.
}
\email{shiho.oi.pmfn20@gmail.com
}

\dedicatory
{Dedicated to Hiroyuki Takagi}

\title[Isometries on Banach algebras]
{Isometries on Banach algebras of vector-valued maps}

\keywords{isometries, vector-valued maps, admissible quadruples, vector-valued Lipschitz algebras, continuously differentiable maps
}

\subjclass[2010]{
46E40,46B04,46J10,46J15
}


\begin{abstract}
We propose a unified approach to the study of isometries on algebras of vector-valued Lipschitz maps and those of continuously differentiable maps by means of the notion of admissible quadruples. We describe isometries on function spaces of some admissible quadruples that take values in unital commutative $C^*$-algebras. As a consequence we confirm the statement of \cite[Example 8]{jp} on Lipschitz algebras and show that isometries on such algebras indeed take the canonical form.
\end{abstract}
\maketitle
\section{Introduction}
A long tradition of inquiry seeks sufficient sets of conditions on a linear map $U$ between Banach spaces in order that $U$ preserves the distance of elements in the spaces. The most prominent result along these lines is the Banach--Stone theorem on a linear map on the space $C(Y)$ (resp. $C_{\mathbb R}(Y)$) of complex-valued (resp. real-valued) continuous functions on a compact Hausdorff space $Y$.
Researchers have derived extensions of the Banach--Stone theorem for several different settings. 
We refer the reader to \cite{fj1,fj2} for a survey of the topic. In this paper an isometry means a complex-linear isometry.

de Leeuw \cite{dl} probably initiated the study of isometries on the algebra of Lipschitz functions on the real line. Roy \cite{roy} studied isometries on the Banach space $\Lip(X)$ of Lipschitz functions on a compact metric space $X$, equipped with the norm $\|f\|=\max\{\|f\|_\infty, L(f)\}$, where $L(f)$ denotes the Lipschitz constant. 
Cambern \cite{c} has considered isometries on spaces of scalar-valued continuously differentiable functions $C^1([0,1])$ with norm given by $\|f\|=\max_{x\in [0,1]}\{|f(x)|+|f'(x)|\}$ for $f\in C^1([0,1])$ and determined a representation for the surjective isometries supported by such spaces. Jim\'enez-Vargas and Villegas-Vallecillos in \cite{amPAMS} have considered isometries of spaces of vector-valued Lipschitz maps on a compact metric space taking values in a strictly convex Banach space, equipped with the norm $\|f\|=\max\{\|f\|_\infty, L(f)\}$, see also \cite{amHouston}. Botelho and Jamison \cite{bjStudia2009} studied isometries on $C^1([0,1],E)$ with $\max_{x\in [0,1]}\{\|f(x)\|_E+\|f'(x)\|_E\}$.
See also \cite{rr,mw,amy,araduba,bfj,kos,bjz,rm,bjPositivity17,mt,kawar,kc1,kc12,lcmw,kkm,lpww,jlp}

From now on, and unless otherwise mentioned, $\alpha$ will be a real scalar in $(0,1)$. 
Jarosz and Pathak \cite{jp} studied a problem when an isometry on a space of continuous functions is a weighted composition operator. They provided a unified approach for certain function spaces including $C^1(X)$, $\Lip(X)$, $\lip(X)$ and $AC[0,1]$. On the other hand, 
 isometries on algebras of Lipschitz maps and continuously differentiable maps have often been studied independently. 
We propose a unified approach to the study of isometries on algebras  $\Lip(X,C(Y))$, $\lip(X,C(Y))$ and $C^1(K,C(Y))$, where $X$ is a compact metric space, $K=[0,1]$ or $\mathbb{T}$ (in this paper $\mathbb{T}$ denotes the unit circle on the complex plane),  and $Y$ is a compact Hausdorff space. We define an admissible quadruple of type L (see Definition \ref{aqL}) as a  common abstraction of Lipschitz algebras and algebras of continuously differentiable maps. We prove that a surjective isometry between admissible quadruple of type L is canonical (Theorem \ref{main}), in the sense that it is represented as a weighted composition operator. As corollaries we describe isometries on $\Lip(X,C(Y))$, $\lip(X,C(Y))$ and $C^1(K,C(Y))$ respectively (Corollaries \ref{isoLip}, \ref{c101}, \ref{c1t}). There is a variety of norms on $\Lip(X,C(Y))$, $\lip(X,C(Y))$ and $C^1(K,C(Y))$. In this paper we consider the norm of $\ell^1$-type; $\|F\|_{\infty(X\times Y)}+L(F)$ for $F\in \Lip(X,C(Y))$, $\|F\|_{\infty(X\times Y)}+L_\alpha(F)$ for $F\in \lip(X,C(Y))$ and $\|F\|_{\infty(K\times Y)}+\|F'\|_{\infty(K \times Y)}$ for $F\in C^1(K,C(Y))$. With these norms $\Lip(X,C(Y))$, $\lip(X,C(Y))$ and $C^1(K,C(Y))$ are commutative Banach algebras respectively. 

Jarosz and Pathak exhibited in \cite[Example 8]{jp} that a surjective isometry on $\Lip(X)$ and $\lip(X)$ of a compact metric space $X$ with respect to the norm $\|\cdot\|_\infty+ L_\alpha(\cdot)$ is canonical. 
There seem to be a confusion of the status of the result and it would be appropriate to clarify the current situation. After the publication of \cite{jp} some authors expressed their suspicion about the argument there and the validity of the statement there had not been confirmed when the authors of \cite{lpww} pointed out a gap by referring the comment of Weaver \cite[p. 243]{wea}. While Weaver in \cite{wea} pointed out that the argument of \cite{jp} failed on p.200 in which the norm $\max\{\|\cdot\|_\infty, L(\cdot)\}$ was studied, he did not seem to have stated explicitly that the argument in the Example 8 contained a flaw.

The authors of the present paper find it difficult to follow the argument given in the Example 8. Besides non-substantial typos, the well-definedness of the map $\Psi_\vartheta:\operatorname{ext}B^*\to \operatorname{ext}B^*$ (\cite[p. 205, line 8]{jp}), where  $\operatorname{ext}B^*$ is the set of all extreme points in the closed unit ball of the dual space of $B=\Lip_{\alpha'}(Y)$ given by $\Psi_\vartheta(\gamma \delta_{(y,\omega,\beta)})=\gamma \delta_{(y,\omega, e^{i\vartheta}\beta)}$ (note that the formula on the line 9 of \cite[p. 205]{jp} reads  in this way) seems to require further explanation. On the other hand Corollary \ref{JPOK} of this paper confirms the statement of \cite[Example 8]{jp}. Our proof uses a similar but slightly different vein than that of Jarosz-Pathak's argument.

The main result in this paper is Theorem \ref{main}, which gives the form of a surjective isometry $U$ between admissible quadruples of type L. The proof of the necessity of the isometry in Theorem \ref{main} comprises several steps. We give an outline of the proof. The crucial part of the proof of Theorem \ref{main} is to prove that $U(1)=1\otimes h$ for an $h\in C(Y_2)$ with $|h|=1$ on $Y_2$ (Proposition \ref{absolute value 1}). To prove Proposition \ref{absolute value 1} we apply Choquet's theory with measure theoretic arguments (Lemmata \ref{1},\ref{2}). By Proposition \ref{absolute value 1} we have that $U_0=(1\otimes \bar{h})U$ is a surjective isometry fixing the unit. Then by applying a theorem of Jarosz \cite{ja} we see that $U_0$ is also an isometry with respect to the supremum norm. By the Banach--Stone theorem $U_0$ is an algebra isomorphism and applying \cite{hots} we see that $U_0$ is a composition operator of type BJ. 
\section{Preliminaries with Definitions and Basic Results
}
\subsection{Algebras of Lipschitz maps and  continuously differentiable maps}
Let $Y$ be a compact Hausdorff space. 
Let $E$ be a complex Banach space. The space of all $E$-valued  continuous maps on $Y$ is denoted by $C(Y,E)$. When $E={\mathbb C}$, $C(Y,E)$ is  abbreviated $C(Y)$. The space of all real-valued continuous functions on $Y$ is denoted by $C_{\mathbb R}(Y)$.
For a subset $K$ of $Y$, the supremum norm on $K$ is  $\|F\|_{\infty(K)}=\sup_{x\in K}\|F(x)\|_E$ for $F\in C(Y,E)$. 
When no confusion will result we omit the subscript $K$ and write only $\|\cdot\|_{\infty}$. 
Let $X$ be a compact metric space and $0<\alpha\le 1$. For $F\in C(X,E)$, put 
\[
L_\alpha(F)=\sup_{x\ne y}\frac{\|F(x)-F(y)\|_E}{d(x,y)^\alpha},
\]
which is called an $\alpha$-Lipschitz number of $F$, or just a Lipschitz number of $F$. When $\alpha=1$ we  omit the subscript $\alpha$ and write only $L(F)$. The space of all $F\in C(X,E)$ such that $L_\alpha(F)<\infty$ is denoted by $\Lip_\alpha(X,E)$. When $\alpha=1$ the subscript is omitted and it is written as $\Lip(X,E)$. 

When $0<\alpha<1$ the closed subspace
\begin{multline*}
\lip(X,E)
\\
=\{F\in \Lip_\alpha(X,E):\text{$\lim_{x\to x_0}\frac{\|f(x_0)-f(x)\|_E}{d(x_0,x)^\alpha}=0$ for every $x_0\in X$}\}
\end{multline*}
of $\Lip_\alpha(X,E)$ is called a little Lipschitz space.  In this paper the norm $\|\cdot\|$ of $\Lip_\alpha(X,E)$ (resp. $\lip(X,E)$) is defined by 
\[
\|F\|=\|F\|_{\infty(X)}+L_\alpha(F), \quad F\in \Lip_\alpha(X,E)\,\, \text{(resp. $\lip(X,E)$)}.
\]
Note that if $d(\cdot,\cdot)$ is a metric, then so is $d(\cdot,\cdot)^\alpha$, and is denoted by $d^\alpha$ which is called a H\"older metric. 
For a compact metric space $(X,d)$, 
$\Lip_\alpha((X,d), E)$ is isometrically isomorphic to $\Lip((X,d^\alpha),E)$. 
In this paper we mainly concern with $E=C(Y)$. In this case $\Lip_\alpha(X,C(Y))$ and $\lip(X,C(Y))$ are unital semisimple commutative Banach algebras with $\|\cdot\|$.  When $E={\mathbb C}$ $\Lip(X,{\mathbb C})$ (resp. $\lip(X,{\mathbb C})$) is abbreviated to $\Lip(X)$ (resp. $\lip(X)$).
There are a variety of complete norms other than $\|\cdot\|$. For example $\|\cdot\|_{\max}=\max\{\|\cdot\|_{\infty}, L_\alpha(\cdot)\}$ is such a norm, but  it fails to be submultiplicative. Hence $\Lip_\alpha(X,C(Y))$ and $\lip(X,C(Y))$ need not be Banach algebras with respect to the norm $\|\cdot\|_{\max}$.

Let $F\in C(K,C(Y))$ for $K=[0,1]$ or ${\mathbb T}$. We say that $F$ is continuously differentiable if there exists $G\in C(K,C(Y))$ such that
\[
\lim_{K\ni t\to t_0}\left\|\frac{F(t_0)-F(t)}{t_0-t}-G(t_0)\right\|_{\infty(Y)}=0
\]
for every $t_0\in K$. We denote $F'=G$. 
Put $C^1(K,C(Y))=\{F\in C(K,C(Y)):\text{$F$ is continuously differentiable}\}$. Then $C^1(K,C(Y))$ with norm $\|F\|=\|F\|_\infty+\|F'\|_\infty$ is a unital semisimple commutative Banach algebra. 
If $Y$ is singleton we may suppose that $C(Y)$ is isometrically isomorphic to ${\mathbb C}$ and we abbreviate $C^1(K,C(Y))$ by $C^1(K)$.

By identifying $C(X, C(Y))$ with $C(X\times Y)$ we may assume that $\Lip(X,C(Y))$ is a subalgebra of $C(X\times Y)$ by the correspondence
\[
F\in \Lip(X,C(Y)) \leftrightarrow ((x,y)\mapsto (F(x))(y))\in C(X\times Y).
\]
Throughout the paper we may assume that 
\begin{equation*}
\begin{split}
&\Lip(X,C(Y))\subset C(X\times Y), \\
&\lip(X,C(Y))\subset C(X\times Y), \\
&C^1(K,C(Y))\subset C(K\times Y).
\end{split}
\end{equation*}
We say that a subset $S$ of $C(Y)$ is point separating if $S$ separates the points of $Y$.
Suppose that $B$ is a unital point separating subalgebra of $C(Y)$ equipped with a Banach algebra norm. Then $B$ is semisimple because 
$\{f\in B:f(x)=0\}$ is a maximal ideal of $B$ for every $x\in X$ and the Jacobson radical of $B$ vanishes. 
 The unit of $B$ is denoted by $1_B$. When no confusion will result we omit the subscript $B$ and write simply as $1$. The maximal ideal space of $B$ is denoted by $M_B$. 
\begin{definition}
We say that $B$ is inverse-closed if $f\in B$ with $f(y)\ne 0$ for every $y\in Y$ implies $f^{-1}\in B$. 
We say that $B$ is natural if the map $e:Y\to M_B$ defined by $y\mapsto \phi_y$, where $\phi_y(f)=f(y)$ for every $f\in B$, is bijective. We say that $B$ is self-adjoint if $B$ is natural and satisfies that $f\in B$ implies that $\bar{f}\in B$ for every $f\in B$, where $\bar{\cdot}$ denotes the complex conjugation on $Y=M_B$. 
\end{definition}
Note that conjugate closedness of $B$ (that is $f\in B$ implies $\bar{f}\in B$) needs not imply the self-adjointness of $B$. 
\begin{prop}\label{el}
Let $Y$ be a compact Hausdorff space. Suppose that $B$ is a unital point separating subalgebra of $C(Y)$ equipped with a Banach algebra norm. If $B$ is dense in $C(Y)$ and inverse-closed, then $B$ is natural.
\end{prop}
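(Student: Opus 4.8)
The plan is to establish bijectivity of $e\colon Y\to M_B$ by handling injectivity and surjectivity separately. Injectivity is immediate: if $y_1\ne y_2$, then since $B$ is point separating there is $f\in B$ with $f(y_1)\ne f(y_2)$, so $\phi_{y_1}\ne\phi_{y_2}$. (That $e$ is continuous into $M_B$ with its weak-$*$ topology, and that each $\phi_y$ is indeed a nonzero character, are routine.) The substantial part is surjectivity: every $\phi\in M_B$ must be a point evaluation, and this is where inverse-closedness and density enter.

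The key preliminary step I would prove first is the sup-norm estimate
\[
|\phi(f)|\le \|f\|_{\infty(Y)},\qquad f\in B,\ \phi\in M_B.
\]
Indeed $\phi(f)\in\sigma_B(f)$, the spectrum of $f$ in $B$, because $f-\phi(f)1$ lies in the proper ideal $\ker\phi$ and hence is not invertible in $B$. And I claim $\sigma_B(f)=f(Y)$: the inclusion $f(Y)\subseteq\sigma_B(f)$ is automatic, since for $\lambda\in f(Y)$ the element $f-\lambda 1$ vanishes somewhere on $Y$, hence is not invertible even in $C(Y)$; conversely, if $\lambda\notin f(Y)$ then $f-\lambda 1$ is a nowhere-vanishing element of $B$, so by inverse-closedness $(f-\lambda 1)^{-1}\in B$, i.e. $\lambda\notin\sigma_B(f)$. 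Thus the spectral radius of $f$ in $B$ equals $\|f\|_{\infty(Y)}$, which gives the estimate. This is precisely the point at which inverse-closedness is used; notably, no self-adjointness or conjugate-closedness of $B$ is needed.

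Given the estimate, the rest is a density argument. Since $B$ is $\|\cdot\|_{\infty(Y)}$-dense in $C(Y)$ and $\phi$ is $\|\cdot\|_{\infty(Y)}$-contractive on $B$ with $\phi(1)=1$, $\phi$ extends uniquely to a bounded linear functional $\tilde\phi$ on $C(Y)$ with $\tilde\phi(1)=1$. Approximating $f,g\in C(Y)$ by sequences in $B$ and using continuity of multiplication in $(C(Y),\|\cdot\|_{\infty(Y)})$ together with multiplicativity of $\phi$ on $B$ shows $\tilde\phi(fg)=\tilde\phi(f)\tilde\phi(g)$, so $\tilde\phi$ is a nonzero character of $C(Y)$. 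By the Gelfand--Naimark theorem (equivalently, the scalar Banach--Stone theorem) there is $y_0\in Y$ with $\tilde\phi=\phi_{y_0}$ on $C(Y)$; restricting to $B$ gives $\phi(f)=f(y_0)$ for all $f\in B$, i.e. $\phi=e(y_0)$. Hence $e$ is surjective and $B$ is natural.

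I expect the only delicate point to be organizing the spectral identity $\sigma_B(f)=f(Y)$ cleanly; injectivity, the extension of $\phi$, and the transfer of multiplicativity are all standard. If one prefers to avoid passing through $C(Y)$, the same ingredients yield a direct argument: if $\ker\phi$ had no common zero on $Y$, compactness would produce $f_1,\dots,f_n\in\ker\phi$ with no common zero, and density would let one form $H=\sum_i b_i f_i\in\ker\phi$ with $b_i\in B$ and $\|H-1\|_{\infty(Y)}<1$; then $H$ is nowhere zero on $Y$, so $H^{-1}\in B$ by inverse-closedness, forcing $1\in\ker\phi$, a contradiction. A common zero $y_0$ then gives $\ker\phi=\{f\in B:f(y_0)=0\}$ by maximality, whence $\phi=e(y_0)$.
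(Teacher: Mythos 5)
Your argument is correct, and your primary route is genuinely different from the paper's. The paper proves surjectivity of $e$ directly by contradiction: if $\phi\in M_B$ is not a point evaluation, then for each $y\in Y$ one normalizes to get $f_y\in\ker\phi$ with $f_y(y)=1$, extracts by compactness finitely many $f_1,\dots,f_n\in\ker\phi$ with $\sum_j|f_j|^2>0$, uses uniform density of $B$ to find $g_j\in B$ with $\sum_j f_jg_j>0$ on $Y$, and then inverse-closedness puts an invertible element inside $\ker\phi$, giving $0=\phi(1)=1$. Your main argument instead isolates the spectral identity $\sigma_B(f)=f(Y)$ (this is exactly where inverse-closedness enters), deduces $|\phi(f)|\le\|f\|_{\infty(Y)}$, extends $\phi$ by sup-norm density to a character of $C(Y)$, and quotes the standard identification of the characters of $C(Y)$ with point evaluations (this is the Gelfand--Kolmogorov description of $M_{C(Y)}$ rather than ``Banach--Stone,'' but the fact is correct and standard). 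What your route buys is modularity and a useful byproduct --- spectral permanence and the automatic sup-norm contractivity of characters of $B$ --- at the cost of invoking the known structure of $M_{C(Y)}$, whose usual proof is essentially the same compactness/partition argument the paper carries out by hand; the paper's proof is thus more self-contained. Your closing ``direct'' alternative (no common zero of $\ker\phi$ $\Rightarrow$ compactness $\Rightarrow$ density gives $H\in\ker\phi$ with $\|H-1\|_\infty<1$ $\Rightarrow$ $H$ invertible, contradiction; a common zero $y_0$ plus maximality gives $\phi=\phi_{y_0}$) is essentially the paper's proof, phrased slightly more cleanly via the dichotomy on common zeros. Your handling of injectivity via point separation matches what the paper leaves implicit.
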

\begin{proof}
Suppose that $e:Y\to M_B$ is not surjective. Then there exists $\phi\in M_B$ such that for every $y\in Y$ there exists $f_y\in B$ with $\phi(f_y)=0$ such that $f_y(y)=1$. As $Y$ is compact, there exists a finite number of $f_1,\dots, f_n\in B$ with $\phi(f_j)=0$ for $j=1,\dots, n$ such that $\sum_{j=1}^n|f_j|^2>0$ on $Y$. Since $B$ is uniformly dense in $C(Y)$ there exist $g_1,\dots, g_n\in B$ such that $\sum_{j=1}^nf_jg_j>0$ on $Y$. As $B$ is inverse-closed, there exists $h\in B$ such that $h\sum_{j=1}^nf_jg_j=1_B$. As $\phi(f_j)=0$ for $j=1,\dots, n$ we have $0=\phi(h\sum_{j=1}^nf_jg_j)=\phi(1_B)=1$, which is a contradiction.
\end{proof}
\begin{cor}\label{eell}
The unital Banach algebras $\Lip(X)$ and $\Lip(X,C(Y))$ with $\|\cdot\|_\infty+L(\cdot)$ are point separating and self-adjoint. For $0<\alpha<1$ the unital Banach algebras $\lip(X)$ with $\|\cdot\|_\infty+L_\alpha(\cdot)$ and $\lip(X,C(Y))$ with $\|\cdot\|_\infty+L_\alpha(\cdot)$ are point separating and self-adjoint. For $K=[0,1]$ and ${\mathbb T}$, the unital Banach algebras $C^1(K)$ with $\|\cdot\|_\infty+\|\cdot'\|_\infty$ and $C^1(K, C(Y))$ with $\|\cdot\|_\infty+\|\cdot'\|_\infty$ are point separating and self-adjoint.
\end{cor}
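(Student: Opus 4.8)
The plan is to verify, for each algebra $B$ appearing in the statement, the three properties: that $B$ separates the points of its underlying compact space, that $f\in B$ implies $\bar f\in B$, and that $B$ is natural; by the definitions, point separation together with the last two properties is exactly what is claimed. The first two I would check directly, while for naturality I intend to invoke Proposition \ref{el}, so the real content of the argument is to show that each $B$ is uniformly dense in the ambient algebra of continuous functions and is inverse-closed.

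For point separation and conjugation-closedness I argue by hand. Regarding $\Lip(X,C(Y))$ and $\lip(X,C(Y))$ as subalgebras of $C(X\times Y)$, take distinct points $(x_1,y_1)$ and $(x_2,y_2)$: if $x_1\ne x_2$, then $(x,y)\mapsto d(x,x_1)$ separates them, and it lies even in $\lip(X,C(Y))$ because, by the triangle inequality, its $\alpha$-Lipschitz quotient at a pair $x,x'$ is at most $d(x,x')^{1-\alpha}$, which is bounded and tends to $0$ as $x'\to x$; if $x_1=x_2$, then $y_1\ne y_2$ and $1\otimes g$ separates them for any $g\in C(Y)$ with $g(y_1)\ne g(y_2)$. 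For $C^1(K,C(Y))$ one replaces $d(\cdot,x_1)$ by the identity function on $K=[0,1]$, or by $z\mapsto z$ on $K=\mathbb T$, which is continuously differentiable in the sense used here. The scalar algebras are the special case of a singleton $Y$. Closure under conjugation is immediate: $L_\alpha(\bar F)=L_\alpha(F)$ in the Lipschitz cases, and in the $C^1$ cases a short computation shows $\bar F$ is again continuously differentiable (with $(\bar F)'=\overline{F'}$ when $K=[0,1]$).

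For naturality, I would first observe that each $B$ is uniformly dense in the corresponding $C(Z)$ (with $Z$ equal to $X$, $X\times Y$, $K$, or $K\times Y$): $B$ is a subalgebra of $C(Z)$ containing the constants which, by the previous paragraph, separates the points of $Z$ and is conjugation-closed, so the Stone--Weierstrass theorem applies. For inverse-closedness, let $f\in B$ vanish nowhere on $Z$; compactness gives $\delta:=\inf_Z|f|>0$, so $1/f\in C(Z)$, and from $1/f(z_1)-1/f(z_2)=(f(z_2)-f(z_1))/(f(z_1)f(z_2))$ together with $|f(z_1)f(z_2)|\ge\delta^2$ one gets $L_\alpha(1/f)\le\delta^{-2}L_\alpha(f)<\infty$, and in the same way the little-Lipschitz condition passes from $f$ to $1/f$ when $\alpha<1$; in the $C^1$ cases the quotient rule gives $(1/f)'=-f'/f^2\in C(K,C(Y))$ and the required difference-quotient limit, again using $\delta$. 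Hence $1/f\in B$, so Proposition \ref{el} gives that $B$ is natural, and with conjugation-closedness this yields self-adjointness, point separation having already been recorded. The only step that calls for any care is the little Lipschitz one — confirming that $d(\cdot,x_1)\in\lip(X)$ (respectively $\lip(X,C(Y))$) and that these algebras really are uniformly dense — but this rests entirely on the elementary estimate $d(x,x')^{1-\alpha}\to 0$, after which Stone--Weierstrass and Proposition \ref{el} apply verbatim.
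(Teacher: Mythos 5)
Your proposal is correct and follows essentially the same route as the paper: establish point separation and conjugation-closedness directly, get uniform density from Stone--Weierstrass, and invoke Proposition \ref{el} to obtain naturality and hence self-adjointness. The only difference is that you spell out the inverse-closedness hypothesis of Proposition \ref{el} (the $\delta^{-2}$ estimate and the quotient rule), which the paper leaves implicit; this is a welcome, if standard, detail rather than a different argument.
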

\begin{proof}
The Lipschitz algebra $\Lip(X)$ is a unital point separating subalgebra of $C(X)$ equipped with a Banach algebra norm $\|\cdot\|_\infty+L(\cdot)$. As $\Lip(X)$ is conjugate closed, the Stone-Weierstrass theorem asserts that $\Lip(X)$ is uniformly dense in $C(X)$. Thus it is natural by Proposition \ref{el}, and, self-adjoint. 
In a similar way to that for $\Lip(X)$ we infer that $\Lip(X,C(Y))$ is self-adjoint. 

Suppose that $0<\alpha<1$. Then we see that $\lip(X)$ separates the points of $X$. (Let $x,y$ be different points in $X$. Put $f:X\to {\mathbb C}$ by $f(\cdot)=d(\cdot,y)$. By a simple calculation we infer that $f\in \lip(X)$ and $f(x)\ne f(y)$.)  In the same way as above we see that $\lip(X)$ and $\lip(X,C(Y))$ are natural, hence self-adjoint.

Let $K=[0,1]$ or $K={\mathbb T}$.
In the same way as above we see that $C^1(K)$ is self-adjoint.  In the same way as above $C^1(K,C(Y))$ is self-adjoint.
\end{proof}
\subsection{Admissible quadruples of type L} 
An admissible quadruple was defined by Nikou and O'Farrell in \cite{no} (see also a comment just after Definition 2.2 in  \cite{hots}). 
The definition is  little complicated and we adopt a simpler definition that is sufficient for our purpose. For a detailed account of admissible quadruples see 
\cite{no} and \cite{hots}. 
Let $X$ and $Y$ be compact Hausdorff spaces. 
For functions $f\in C(X)$ and $g\in C(Y)$, let $f\otimes g\in C(X\times Y)$ be the function defined by $f\otimes g(x,y)=f(x)g(y)$, and for a subspace $E_X$ of $C(X)$ and a subspace $E_Y$ of $C(Y)$, let
\[
E_X\otimes E_Y=\left\{\sum_{j=1}^nf_j\otimes g_j: n\in {\mathbb N},\,\,f_j\in E_X,\,\,g_j\in E_Y\right\}.
\]
An admissible quadruple $(X, C(Y), B, \widetilde{B})$  in this paper is defined as follows. 
\begin{definition}\label{aqL}
Let $X$ and $Y$ be compact Hausdorff spaces. 
Let $B$ and $\wb$ be  unital point separating subalgebras of $C(X)$ and  $C(X\times Y)$ equipped with Banach algebra norms respectively which satisfy
\[
B\otimes C(Y)\subset \widetilde{B},\,\,\{F(\cdot, y):F\in \widetilde{B},\,\,y\in Y\}\subset B.
\]
We say that $(X,C(Y),B,\widetilde{B})$ is an admissible quadruple of type L if the following conditions are satisfied.
\begin{itemize}
\item[$\cno$]
The algebras $B$ and  $\widetilde{B}$ are self-adjoint.
\item[$\cnt$]
There exists a compact Hausdorff space $\mathfrak{M}$ and a complex-linear operator $D:\wb\to C(\mathfrak{M})$ such that 
\[D(\widetilde{B}\cap C_{\mathbb R}(X\times Y))\subset C_{\mathbb R}(\mathfrak{M})
\]
and also
\begin{itemize}
\item[(1)]
the norm $\|\cdot\|$ on $\widetilde{B}$ satisfies
\[
\|F\|=\|F\|_{\infty(X\times Y)}+\|D(F)\|_{\infty(\mathfrak{M})},\quad F\in \widetilde{B},
\]
\item[(2)]
$\operatorname{ker}D=1_B\otimes C(Y)$,
\item[(3)]
$\|D((1_B\otimes g)F)\|_{\infty(\mathfrak{M})}=\|D(F)\|_{\infty(\mathfrak{M})}$ for every $F\in \wb$ and $g\in C(Y)$ such that $|g|=1$ on $Y$.
\end{itemize}
\end{itemize}
\end{definition}
It will be appropriate to make a few comments on the above definition. 
First we do not assume that $D(\widetilde{B})$ is point separating.
Next $B$ and $\widetilde{B}$ are semisimple since they are point separating. 
For a point $x\in X$ define $e_x:\widetilde{B}\to C(Y)$ by $e_x(F)=F(x,\cdot)$ for every $F\in \widetilde{B}$. 
A theorem of \v Silov (see \cite[Theorem 3.1.11]{pal}) states that the map $e_x:\widetilde{B}\to C(Y)$ is automatically continuous for every $x\in X$ since $C(Y)$ is semisimple.
Hence it is straightforward to check that an admissible quadruple of type L is in fact  an admissible quadruple defined by Nikou and O'Farrell in \cite{no} (see also \cite{hots}). In particular if $X$ is a compact metric space, then $(X, C(Y), \Lip(X), \Lip(X,C(Y)))$, $(X,C(Y), \lip(X), \lip(X,C(Y)))$ and $(K, C(Y), C^1(K), C^1(K,C(Y)))$ for $K=[0,1],{\mathbb T}$ are admissible quadruples of type L. See Section \ref{example}.

We define a seminorm $\nn\cdot\nn$ on $\wb$ by $\nn F\nn=\|D(F)\|_{\infty(\mathfrak{M})}$ for $F\in \wb$. 
Note that $\nn\cdot\nn$ is one-invariant in the sense of Jarosz \cite{ja} ($\nn F\nn=\nn F+1_{\wb}\nn$ for every $F\in \wb$) since $1_{\wb}=1_B\otimes 1_{C(Y)}$ and $D(1_{\wb})=0$. 
The norm $\|\cdot\|=\|\cdot\|_\infty+\nn\cdot\nn$ is a $p$-norm (see \cite[p.67]{ja}).
\subsection{Preliminaries on measures} 
We recall some basic properties of regular Borel measures for the convenience of the readers. As the authors could not find appropriate references, we exhibit the properties in Lemmata \ref{0.1}, \ref{0.2} and \ref{0.3}. 
In Lemmata \ref{0.1} and \ref{0.2}, $X$ is a compact Hausdorff space and $\mu$ is a Borel probability measure (a positive measure on the $\sigma$-algebra of Borel sets whose total measure is $1$). For a non-empty Borel subset $S$ of $X$, $\mu|S$ denotes the measure on $S$ which is the restriction of $\mu$; $\mu|S(E)=\mu(E)$ for a Borel set $E\subset S$. Recall that the support of $\mu$ is the set defined by 
\[
\suppm \mu=\{x\in X: \text{$\mu(U)>0$ for every open neighborhood $U$ of $x$}\}.
\]
\begin{lemma}\label{0.1}
Let $K$ be a non-empty compact subset of $X$ and $f\in C(X)$. Assume that $f\le c$ on $K$ for a constant $c>0$. If
\[
\int_K f d\mu=c\mu(K),
\]
then $\suppm(\mu|K)\subset f^{-1}(c)\cap K$.
\end{lemma}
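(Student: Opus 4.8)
The plan is to argue by contradiction, exploiting that a point outside $f^{-1}(c)$ forces a strict inequality $f < c$ on a neighborhood, and that the support of the measure charges every such neighborhood. First I would suppose that $\suppm(\mu|K) \not\subset f^{-1}(c)\cap K$, so there is a point $x_0 \in \suppm(\mu|K)$ with $x_0 \in K$ but $f(x_0) < c$. Fix $\varepsilon > 0$ with $f(x_0) < c - \varepsilon$; by continuity of $f$ there is an open neighborhood $U$ of $x_0$ in $X$ with $f < c - \varepsilon$ on $U \cap K$. Since $x_0 \in \suppm(\mu|K)$, we have $\mu(U \cap K) = (\mu|K)(U) > 0$.

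Next I would split the integral over $K$ into the part over $U \cap K$ and the part over $K \setminus U$, and estimate each using $f \le c$ on all of $K$ together with the sharper bound $f < c - \varepsilon$ on $U \cap K$:
\[
\int_K f\, d\mu = \int_{U\cap K} f\, d\mu + \int_{K\setminus U} f\, d\mu \le (c-\varepsilon)\mu(U\cap K) + c\,\mu(K\setminus U) = c\,\mu(K) - \varepsilon\,\mu(U\cap K).
\]
Since $\mu(U\cap K) > 0$, this gives $\int_K f\, d\mu < c\,\mu(K)$, contradicting the hypothesis $\int_K f\, d\mu = c\,\mu(K)$. Hence no such $x_0$ exists, and $\suppm(\mu|K) \subset f^{-1}(c)\cap K$.

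This argument is essentially routine; the only point requiring a little care is the measurability and the bookkeeping in the splitting step — one should note $U \cap K$ is a Borel (indeed relatively open) subset of $K$ so that $\mu|K$ assigns it a well-defined value, and that $f$ is bounded on the compact set $K$ so all integrals are finite. There is no substantial obstacle here; the lemma is a standard fact recorded for the reader's convenience, as the authors indicate.
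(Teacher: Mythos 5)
Your proof is correct and follows essentially the same route as the paper: take a point of $\suppm(\mu|K)$ where $f<c$, use continuity to bound $f$ strictly below $c$ on a relatively open neighborhood of positive $\mu|K$-measure, split the integral over that neighborhood and its complement in $K$, and contradict $\int_K f\,d\mu=c\mu(K)$. The only cosmetic difference is that the paper uses the threshold $(f(x_0)+c)/2$ instead of your $c-\varepsilon$.
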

\begin{proof}
Let $x\in \suppm(\mu|K)$. Then $x\in K$ by the definition of the support of $\mu|K$. Suppose that $f(x)\ne c$. As $f\le c$ on $K$, we have $f(x)<c$. Since $f|K$ is continuous on $K$, there exists an open neighborhood $U$ of $x$ relative to $K$ such that $f<(f(x)+c)/2$ on $U$. As $x\in \suppm(\mu|K)$ we have that $\mu(U)>0$. Then
\begin{equation*}
\begin{split}
\int_Kfd\mu
&=\int_Ufd\mu + \int_{K\setminus U}fd\mu \\
&\le \frac{f(x)+c}{2}\mu(U) +c\mu(K\setminus U) \\
&= c\mu(K)-\frac{c-f(x)}{2}\mu(U)<c\mu(K),
\end{split}
\end{equation*}
which is a contradiction proving that $f(x)=c$. Thus we conclude that $\suppm(\mu|K)\subset f^{-1}(c)\cap K$.
\end{proof}
\begin{lemma}\label{0.2}
Suppose that $K_1$ and $K_2$ are non-empty compact subsets of $X$. Then
\[
\suppm (\mu|K_1) \cup \suppm (\mu|K_2) =\suppm( \mu|(K_1\cup K_2)).
\]
\end{lemma}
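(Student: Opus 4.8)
The plan is to prove the set equality $\suppm(\mu|K_1)\cup\suppm(\mu|K_2)=\suppm(\mu|(K_1\cup K_2))$ by two inclusions, working directly from the definition of the support as the set of points all of whose open neighborhoods have positive measure, and being careful about the distinction between open sets in $X$ and open sets relative to the compact subsets $K_i$ and $K_1\cup K_2$.

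For the inclusion $\subset$, suppose $x\in\suppm(\mu|K_1)$; then $x\in K_1\subset K_1\cup K_2$, and for every open neighborhood $U$ of $x$ in $X$ we have $\mu(U\cap K_1)>0$. Since $U\cap K_1\subset U\cap(K_1\cup K_2)$ and $\mu$ is monotone, $\mu(U\cap(K_1\cup K_2))\ge\mu(U\cap K_1)>0$, so $x\in\suppm(\mu|(K_1\cup K_2))$. The case $x\in\suppm(\mu|K_2)$ is identical, so $\suppm(\mu|K_1)\cup\suppm(\mu|K_2)\subset\suppm(\mu|(K_1\cup K_2))$.

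For the reverse inclusion, I would argue by contraposition. Suppose $x\notin\suppm(\mu|K_1)\cup\suppm(\mu|K_2)$. If $x\notin K_1\cup K_2$ then, since $K_1\cup K_2$ is closed, $X\setminus(K_1\cup K_2)$ is an open neighborhood $U$ of $x$ with $\mu(U\cap(K_1\cup K_2))=0$, so $x\notin\suppm(\mu|(K_1\cup K_2))$. Otherwise $x\in K_1\cup K_2$; we may assume $x\in K_1$ (the other case is symmetric), and then $x\notin\suppm(\mu|K_1)$ forces the existence of an open neighborhood $U_1$ of $x$ in $X$ with $\mu(U_1\cap K_1)=0$. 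If also $x\in K_2$, then likewise there is an open neighborhood $U_2$ of $x$ with $\mu(U_2\cap K_2)=0$; if $x\notin K_2$, then since $K_2$ is closed we may take $U_2=X\setminus K_2$, which still satisfies $\mu(U_2\cap K_2)=0$. Put $U=U_1\cap U_2$, an open neighborhood of $x$ in $X$. Then $U\cap(K_1\cup K_2)=(U\cap K_1)\cup(U\cap K_2)\subset(U_1\cap K_1)\cup(U_2\cap K_2)$, so $\mu(U\cap(K_1\cup K_2))\le\mu(U_1\cap K_1)+\mu(U_2\cap K_2)=0$, whence $x\notin\suppm(\mu|(K_1\cup K_2))$. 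This completes the contrapositive and hence the inclusion $\suppm(\mu|(K_1\cup K_2))\subset\suppm(\mu|K_1)\cup\suppm(\mu|K_2)$.

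I do not expect any serious obstacle here; the argument is purely point-set measure theory. The only point requiring a little care is the bookkeeping when $x$ lies in one of the $K_i$ but not the other, where one should not try to use the failure of the support condition for the $K_i$ not containing $x$ (that condition is vacuous or ill-posed) but instead exploit that $K_i$ is closed to produce an open set disjoint from it; and similarly the trivial case $x\notin K_1\cup K_2$ must be handled separately. Monotonicity and finite subadditivity of $\mu$ are all that is used beyond that.
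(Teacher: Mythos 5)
Your proof is correct and follows essentially the same route as the paper: the easy inclusion by monotonicity, and the reverse inclusion by intersecting a null neighborhood for $K_1$ with one for $K_2$ (or with $X\setminus K_2$ when $x\notin K_2$) and using subadditivity; the only cosmetic differences are that you argue by contraposition rather than contradiction and phrase supports via open sets of $X$ instead of relative open sets.
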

\begin{proof}
Suppose that $x\in \suppm(\mu|K_1)$. Let $G$ be an arbitrary open neighborhood of $x$ relative to $K_1\cup K_2$. Then there is an open set $\tilde G$ in $X$ with $\tilde{G}\cap (K_1\cup K_2)=G$. Then $\tilde{G}\cap K_1$ is an open neighborhood of $x$ relative to $K_1$ and $G=\tilde{G}\cap(K_1\cup K_2)\supset \tilde{G}\cap K_1$. As $x\in \suppm(\mu|K_1)$ we have $0<\mu(\tilde{G}\cap K_1)\le \mu(G)$. Since $G$ is arbitrary we conclude that $x\in \suppm(\mu|(K_1\cup K_2))$; that is $\suppm(\mu|K_1)\subset\suppm(\mu|(K_1\cup K_2))$. In the same way we have $\suppm(\mu|K_2)\subset\suppm(\mu|(K_1\cup K_2))$. Thus we have $\suppm(\mu|K_1) \cup \suppm(\mu|K_2)\subset\suppm(\mu|(K_1\cup K_2))$.

Suppose conversely that $x\in \suppm(\mu|(K_1\cup K_2))$. Then $x\in K_1\cup K_2$. Suppose that $x\not\in \suppm(\mu|K_1)\cup \suppm(\mu|K_2)$. First we consider the case that $x\in K_1\cap K_2$. Then there is an open neighborhood $G_1$ of $x$ relative to $K_1$ and an open neighborhood $G_2$ of $x$ relative to $K_2$ such that $\mu(G_1)=\mu(G_2)=0$ since we have assumed that $x\not\in \suppm(\mu|K_1)\cup \suppm(\mu|K_2)$. There exist open sets $\tilde{G_1}$ and $\tilde{G_2}$ in $X$ such that $\tilde{G_1}\cap K_1=G_1$ and $\tilde{G_2}\cap K_2=G_2$. Put $\tilde{G}=\tilde{G_1}\cap \tilde{G_2}$. Then $\tilde{G}$ is an open set in $X$ and $x\in \tilde{G}$. Then $\tilde{G}\cap(K_1\cup K_2)$ is an open neighborhood of $x$ relative to $K_1\cup K_2$ and 
\[
\tilde{G}\cap (K_1\cup K_2)=(\tilde{G}\cap K_1)\cup(\tilde{G}\cap K_2)
\subset 
(\tilde{G_1}\cap K_1)\cup (\tilde{G_2}\cap K_2)=G_1\cup G_2.
\]
Then 
\[
0\le \mu(\tilde{G}\cap (K_1\cup K_2))\le \mu(G_1\cup G_2)
\le \mu(G_1)+\mu(G_2)=0,
\]
so that $\mu(\tilde{G}\cap (K_1\cup K_2))=0$, which is a contradiction since $x\in \suppm(\mu|(K_1\cup K_2))$. Next we consider the case where $x\in K_1$ and $x\not\in K_2$. Then there exists an open neighborhood $G_1$ of $x$ relative to $K_1$ with $\mu(G_1)=0$ since we have assumed that $x\not\in\suppm(\mu|K_1)$. There exists an open set $\tilde{G_1}$ in $X$ such that $\tilde{G_1}\cap K_1=G_1$. Since $x\not\in K_2$ we infer that $\tilde{G_1}\cap K_2^c$ is an open neighborhood of $x$ in $X$. Then $(\tilde{G_1}\cap K_2^c)\cap(K_1\cup K_2)$ is an open neighborhood of $x$ relative to $K_1\cup K_2$ and 
\[
(\tilde{G_1}\cap K_2^c)\cap(K_1\cup K_2)=\tilde{G_1}\cap K_2^c\cap K_1\subset \tilde{G_1}\cap K_1=G_1.
\]
As $(\tilde{G_1}\cap K_2^c)\cap(K_1\cup K_2)$ is an open neighborhood of $x$ relative to $K_1\cup K_2$, we infer that 
 $0<\mu((\tilde{G_1}\cap K_2^c)\cap(K_1\cup K_2))$ since $x\in \suppm(\mu|(K_1\cup K_2))$. On the other hand $(\tilde{G_1}\cap K_2^c)\cap(K_1\cup K_2)\subset G_1$ assures that 
\[
0< \mu((G_1\cap K_2^c)\cap(K_1\cup K_2))\le \mu(G_1)=0,
\]
which is a contradiction. In the same way we derive a contradiction for the case where $x\not\in K_1$ and $x\in K_2$. Therefore we conclude that $x\in \suppm(\mu|K_1)\cup \suppm(\mu|K_2)$.
\end{proof}
We assume the regularity for the measure $\mu$ in Lemma \ref{0.3}. If $\mu$ is a regular Borel probability measure on a compact Hausdorff space $Y$, then for any Borel set $S$ in $Y\setminus \suppm(\mu)$ we have $\mu(S)=0$. Indeed the regularity of $\mu$ assures that $\mu(S)$ is approximated arbitrarily closely  by $\mu(E)$ for a compact subset $E\subset S$. Since $S\cap \suppm(\mu)=\emptyset$, we use the compactness to cover $E$ by a finitely many open sets with measure zero. This implies $\mu(E)=0$ and thus $\mu(S)=0$.

\begin{lemma}\label{0.3}
Let $Y$ be a compact Hausdorff space and let $K$ be a non-empty compact subset of $Y$ and let 
$\mu$ be a regular Borel probability measure on $Y\times \mathbb{T}$. 
Let $g\in C_{\mathbb{R}}(Y)$ such that $|g|\le c$ on $K$ for some $c>0$. Suppose that there exists $\gamma_0\in \mathbb{T}$ such that
\[
\int_{K\times\mathbb{T}}\gamma g(y)
d\mu(y,\gamma)=\gamma_0c \mu(K\times \mathbb{T}).
\]
Then we have the inclusion
\begin{multline*}
\suppm (\mu|K\times \mathbb{T})
\\
\subset 
\left\{(g^{-1}(c)\cap K)\times \{\gamma_0\}\right\}
\cup
\left\{(g^{-1}(-c)\cap K)\times \{-\gamma_0\}\right\}.
\end{multline*}
\end{lemma}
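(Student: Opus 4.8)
The plan is to reduce everything to Lemma \ref{0.1} by multiplying the hypothesis through by $\bar\gamma_0$ and passing to real parts. Set $Z=Y\times\mathbb{T}$, a compact Hausdorff space, and regard $\mu$ as a Borel probability measure on $Z$. Define $f\in C(Z)$ by $f(y,\gamma)=\operatorname{Re}(\bar\gamma_0\gamma)\,g(y)$. Since $|\operatorname{Re}(\bar\gamma_0\gamma)|\le 1$ and $|g(y)|\le c$ on $K$, we have $|f|\le c$ on $K\times\mathbb{T}$; in particular $f\le c$ there.

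Next I would multiply the assumed identity by $\bar\gamma_0$ and take real parts. The right-hand side $c\,\mu(K\times\mathbb{T})$ is real, so this yields
\[
\int_{K\times\mathbb{T}} f\,d\mu = c\,\mu(K\times\mathbb{T}).
\]
Applying Lemma \ref{0.1} with the compact set $K\times\mathbb{T}\subset Z$ then gives $\suppm(\mu|K\times\mathbb{T})\subset f^{-1}(c)\cap(K\times\mathbb{T})$.

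It remains to identify $f^{-1}(c)\cap(K\times\mathbb{T})$. If $(y,\gamma)\in K\times\mathbb{T}$ satisfies $\operatorname{Re}(\bar\gamma_0\gamma)\,g(y)=c>0$, then because the first factor has modulus at most $1$ and the second modulus at most $c$, both must attain their extreme values with matching signs: either $\operatorname{Re}(\bar\gamma_0\gamma)=1$ and $g(y)=c$, or $\operatorname{Re}(\bar\gamma_0\gamma)=-1$ and $g(y)=-c$. Since $\gamma,\gamma_0\in\mathbb{T}$, the condition $\operatorname{Re}(\bar\gamma_0\gamma)=1$ is equivalent to $\gamma=\gamma_0$, and $\operatorname{Re}(\bar\gamma_0\gamma)=-1$ to $\gamma=-\gamma_0$. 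Hence
\[
f^{-1}(c)\cap(K\times\mathbb{T})=\bigl\{(g^{-1}(c)\cap K)\times\{\gamma_0\}\bigr\}\cup\bigl\{(g^{-1}(-c)\cap K)\times\{-\gamma_0\}\bigr\},
\]
which is exactly the asserted inclusion once combined with the display from Lemma \ref{0.1}.

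There is no serious obstacle; the only delicate point is the sign analysis that forces $\gamma$ to equal $\pm\gamma_0$, and this relies essentially on the hypothesis $c>0$, so that a product of quantities bounded by $1$ and by $c$ can equal $c$ only when each is extremal. Note that the regularity of $\mu$ is not actually needed for this argument, since Lemma \ref{0.1} does not require it.
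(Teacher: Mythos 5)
Your proof is correct. The hypotheses of Lemma \ref{0.1} are indeed met by $f(y,\gamma)=\operatorname{Re}(\overline{\gamma_0}\gamma)\,g(y)$ on the compact space $Y\times\mathbb{T}$: multiplying the assumed identity by $\overline{\gamma_0}$ and taking real parts (legitimate since $g$ is real-valued and $\mu$ is positive) gives $\int_{K\times\mathbb{T}}f\,d\mu=c\,\mu(K\times\mathbb{T})$ with $f\le c$ on $K\times\mathbb{T}$, and your pointwise identification of $f^{-1}(c)\cap(K\times\mathbb{T})$ via the extremality/sign argument (using $c>0$ and $|\overline{\gamma_0}\gamma|=1$, so $\operatorname{Re}(\overline{\gamma_0}\gamma)=\pm1$ forces $\gamma=\pm\gamma_0$) is exactly right. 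This is, however, a genuinely different and leaner route than the paper's. The paper first applies Lemma \ref{0.1} to $|g\otimes 1_{C(\mathbb{T})}|$ to localize the support in $(|g|^{-1}(c)\cap K)\times\mathbb{T}$, then splits $|g|^{-1}(c)$ into $K_1=g^{-1}(c)$ and $K_2=g^{-1}(-c)$, runs a modulus comparison ($M_1+M_2=e^{i\delta_1}N_1-e^{i\delta_2}N_2$) to show the integral of $\overline{\gamma_0}\gamma$ over each piece saturates its measure, applies Lemma \ref{0.1} twice more to $\operatorname{Re}(\overline{\gamma_0}\gamma)$ and $\operatorname{Re}(-\overline{\gamma_0}\gamma)$ to pin down the $\mathbb{T}$-coordinate on each piece, and finally glues the pieces with Lemma \ref{0.2}, invoking regularity of $\mu$ to discard the set off $(K_1\cup K_2)\cap K$. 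Your single application of Lemma \ref{0.1} to the product function absorbs all of that bookkeeping at once, dispenses with Lemma \ref{0.2} entirely, and, as you correctly note, does not use regularity of $\mu$ (Lemma \ref{0.1} needs none), so your argument even proves the statement under slightly weaker hypotheses; the only thing the paper's longer decomposition buys is that it mirrors the two-piece structure $(K_1\times\{\gamma_0\})\cup(K_2\times\{-\gamma_0\})$ explicitly, which is not needed for the stated inclusion.
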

\begin{proof}
As $|\gamma g|=|g|\le c$ on $K\times \mathbb{T}$ we have
\[
c\mu (K\times \mathbb{T})=\left|\int_{K\times\mathbb{T}}\gamma g(y)d\mu\right|\le \int_{K\times \mathbb{T}}|g(y)|d\mu\le c\mu(K\times \mathbb{T}),
\]
hence $\int_{K\times \mathbb{T}}|(g\otimes 1_{C(\mathbb{T})})(y,\gamma)|d\mu=\int_{K\times \mathbb{T}}|g(y)|d\mu= c\mu(K\times \mathbb{T})$. 
Letting $|g\otimes 1_{C(\mathbb{T})}|$ be 
the function 
$f$ 
 and $K\times \mathbb{T}$ be the compact set $K$ of Lemma \ref{0.1} respectively we have 
\[
\suppm(\mu|K\times \mathbb{T})\subset (|g\otimes 1_{C({\mathbb T})}|^{-1}(c))\cap (K\times\mathbb{T})=
(|g|^{-1}(c)\cap K)\times \mathbb{T}.
\]
As $g$ is a real-valued function we infer by a simple calculation that 
\[
|g|^{-1}(c)=g^{-1}(c)\cup g^{-1}(-c).
\]
Put $K_1=g^{-1}(c)$ and $K_2=g^{-1}(-c)$. As $c>0$, we have $K_1\cap K_2=\emptyset$. Then
\begin{multline}\label{o1}
\suppm(\mu|K\times \mathbb{T})
\subset ((K_1\cup K_2)\cap K)\times\mathbb{T}\\
=(K_1\cap K)\times \mathbb{T} \cup (K_2\cap K)\times \mathbb{T}.
\end{multline}
As $\mu$ is regular, we have that 
\[
\mu(K\times \mathbb{T}\setminus[(K_1\cap K)\times \mathbb{T} \cup (K_2\cap K)\times \mathbb{T}])=0.
\]
It follows that 
\begin{multline*}
\gamma_0 c \mu (K\times \mathbb{T})=
\int_{K\times\mathbb{T}}\gamma g(y)d\mu \\
=\int_{(K_1\cap K)\times\mathbb{T}}\gamma g(y)d\mu +
\int_{(K_2\cap K)\times\mathbb{T}}\gamma g(y)d\mu \\
=c\int_{(K_1\cap K)\times\mathbb{T}}\gamma d\mu-c\int_{(K_2\cap K)\times\mathbb{T}}\gamma d\mu.
\end{multline*}
Thus we have
\begin{equation}\label{sub1}
\mu(K\times \mathbb{T})=\int_{(K_1\cap K)\times \mathbb{T}}\overline{\gamma_0}\gamma d\mu-\int_{(K_2\cap K)\times\mathbb{T}}\overline{\gamma_0}\gamma d\mu.
\end{equation}
Put $M_1=\int_{(K_1\cap K)\times \mathbb{T}}1d\mu$ and $M_2=\int_{(K_2\cap K)\times \mathbb{T}}1d\mu$. As $\mu$ is regular and $K_1\cap K_2=\emptyset$ we have
\begin{equation}\label{sub2}
M_1+M_2= \int_{((K_1\cup K_2)\cap K)\times \mathbb{T}}1d\mu=\int_{K\times \mathbb{T}}1d\mu=\mu(K\times \mathbb{T}).
\end{equation}
Put
\[
\int_{(K_1\cap K)\times\mathbb{T}}\overline{\gamma_0}\gamma d\mu=e^{i\delta_1}N_1, \quad 
\int_{(K_2\cap K)\times\mathbb{T}}\overline{\gamma_0}\gamma d\mu=e^{i\delta_2}N_2,
\]
where $N_1, N_2\ge 0$ and $\delta_1,\delta_2\in {\mathbb R}$. We may assume that $e^{i\delta_1}=1$ if $N_1=0$ and $e^{i\delta_2}=-1$ if $N_2=0$. Note that $N_1\le M_1$ and $N_2\le M_2$. By \eqref{sub1} and \eqref{sub2} we obtain 
\[
M_1+M_2=e^{i\delta_1}N_1-e^{i\delta_2}N_2.
\]
 Then by a simple calculation we have that $e^{i\delta_1}=-e^{i\delta_2}=1$, $N_1=M_1$, and $N_2=M_2$, that is, 
\[
\int_{(K_1\cap K)\times\mathbb{T}}\overline{\gamma_0}\gamma d\mu =\mu((K_1\cap K)\times\mathbb{T}), \quad 
\int_{(K_2\cap K)\times\mathbb{T}}-\overline{\gamma_0}\gamma d\mu =\mu((K_2\cap K)\times\mathbb{T}).
\]
Then 
\begin{equation}\label{sub3}
\mu((K_1\cap K)\times\mathbb{T})=\operatorname{Re}\int_{(K_1\cap K)\times \mathbb{T}}\overline{\gamma_0}\gamma d\mu=
\int_{(K_1\cap K)\times\mathbb{T}}\operatorname{Re}\overline{\gamma_0}\gamma d\mu,
\end{equation}
\begin{equation}\label{sub4}
\mu((K_2\cap K)\times\mathbb{T})=\operatorname{Re}\int_{(K_2\cap K)\times \mathbb{T}}-\overline{\gamma_0}\gamma d\mu=
\int_{(K_2\cap K)\times\mathbb{T}}\operatorname{Re}(-\overline{\gamma_0}\gamma) d\mu.
\end{equation}
Applying Lemma \ref{0.1} to \eqref{sub3} we infer that
\[
\operatorname{supp}(\mu|((K_1\cap K)\times\mathbb{T}))\subset
(K_1\cap K)\times \{\gamma_0\}
\]
since $\operatorname{Re}\overline{\gamma_0}\gamma  \le 1$ and $(\operatorname{Re}\overline{\gamma_0}\gamma)^{-1}(1)=\{\gamma_0\}$. 
In the same way we have by \eqref{sub4} that
\[
\operatorname{supp}(\mu|((K_2\cap K)\times\mathbb{T}))\subset
(K_2\cap K)\times \{-\gamma_0\}.
\]
By Lemma \ref{0.2} we have that 
\begin{multline}\label{o2}
\operatorname{supp}\Big(\mu|\big(((K_1\cup K_2)\cap K)\times\mathbb{T}\big)\Big)
\\
\subset
\big\{(K_1\cap K)\times\{\gamma_0\}\big\}\cup
\big\{(K_2\cap K)\times\{-\gamma_0\}\big\}.
\end{multline}
Since $\mu$ is regular, so is $\mu|(K\times\mathbb{T})$. Thus $\mu|(K\times\mathbb{T})$ is a regular Borel measure on $K\times\mathbb{T}$ such that $\operatorname{supp}(\mu|(K\times\mathbb{T}))\subset ((K_1\cup K_2)\cap K)\times\mathbb{T}$ by \eqref{o1}. Thus
\begin{multline*}
\operatorname{supp}(\mu|(K\times\mathbb{T}))=
\operatorname{supp}\Big(\big(\mu|(K\times\mathbb{T})\big)|\big(((K_1\cup K_2)\cap K)\times\mathbb{T}\big)\Big)\\
=\operatorname{supp}\Big(\mu|\big(((K_1\cup K_2)\cap K)\times\mathbb{T}\big)\Big),
\end{multline*}
hence the conclusion holds by \eqref{o2}.
\end{proof}
Lemma \ref{0.3} plays an essential role in the proof of Lemma \ref{2} which is a crucial lemma for the proof of Proposition \ref{absolute value 1}.
\section{Isometries on admissible quadruples of type L}
The main result of this paper is the following. 
\begin{theorem}\label{main}
Suppose that $(X_j, C(Y_j), B_j, \wbj)$ is an admissible quadruple of type L for $j=1,2$. 
Suppose that $U:\wbo\to \wbt$ is a surjective isometry. Then there exists $h\in C(Y_2)$ such that $|h|=1$ on $Y_2$, a continuous map $\varphi:X_2\times Y_2\to X_1$ such that $\varphi(\cdot,y):X_2\to X_1$ is a homeomorphism for each $y\in Y_2$, and a homeomorphism $\tau:Y_2\to Y_1$ which satisfy
\[
U(F)(x,y)=h(y)F(\varphi(x,y),\tau(y)),\qquad (x,y)\in X_2\times Y_2
\]
for every $F\in \wbo$.
\end{theorem}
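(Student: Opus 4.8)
The plan is to follow the route indicated just after the statement of Theorem \ref{main}. The crux is Proposition \ref{absolute value 1}: $U(1_{\wbo})=1_{B_2}\otimes h$ for some $h\in C(Y_2)$ with $|h|\equiv 1$ on $Y_2$. Since $1_{\wbo}=1_{B_1}\otimes 1\in\ker D_1$ by $\cnt$(2), we have $\nn 1_{\wbo}\nn=0$, so $\|1_{\wbo}\|=1$, and as $U$ is a surjective isometry, $\|U(1)\|_{\infty(X_2\times Y_2)}+\nn U(1)\nn=\|U(1)\|=1$. To pin $U(1)$ down I would, for each $(x,y)\in X_2\times Y_2$, study the norm-one functional $F\mapsto U(F)(x,y)$ on $\wbo$: via the isometric embedding $F\mapsto(F,D_1F)$ of $\wbo$ into $C(X_1\times Y_1)\oplus_1 C(\M_1)$ and Hahn--Banach it is represented by a pair of regular Borel measures, and then the self-adjointness $\cno$, the norm identity $\cnt$(1) and the invariance $\cnt$(3) should confine those measures to thin sets. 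This is exactly where Lemmata \ref{1} and \ref{2}, and through Lemma \ref{2} the measure-theoretic Lemma \ref{0.3}, enter. The conclusion sought is $D_2(U(1))=0$, whence $U(1)\in\ker D_2=1_{B_2}\otimes C(Y_2)$ by $\cnt$(2), together with $|U(1)|\equiv1$; writing $U(1)=1\otimes h$ finishes this step. I expect this Choquet-theoretic step to be the main obstacle; the remainder is comparatively soft.

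Given $h$, put $U_0=(1\otimes\bar h)U:\wbo\to\wbt$. As $|\bar h|\equiv1$ on $Y_2$, multiplication by $1\otimes\bar h$ preserves $\|\cdot\|_{\infty(X_2\times Y_2)}$, and by $\cnt$(3) it preserves $\nn\cdot\nn$; hence $U_0$ is again a surjective isometry for $\|\cdot\|=\|\cdot\|_\infty+\nn\cdot\nn$, and $U_0(1_{\wbo})=1_{\wbt}$. Because $\|\cdot\|$ is a $p$-norm whose seminorm $\nn\cdot\nn$ is one-invariant, the theorem of Jarosz \cite{ja} applies and gives $\|U_0(F)\|_{\infty(X_2\times Y_2)}=\|F\|_{\infty(X_1\times Y_1)}$ for every $F\in\wbo$; consequently $\nn U_0(F)\nn=\|U_0(F)\|-\|U_0(F)\|_\infty=\|F\|-\|F\|_\infty=\nn F\nn$, so $U_0$ carries $\ker D_1$ onto $\ker D_2$.

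Finally, $U_0$ is a unital surjective linear isometry for the supremum norm. Since each $\wbj$ is self-adjoint by $\cno$, it is a point-separating, conjugate-closed, unital subalgebra of $C(X_j\times Y_j)$, hence uniformly dense there by Stone--Weierstrass; so $U_0$ extends to a unital surjective linear isometry $C(X_1\times Y_1)\to C(X_2\times Y_2)$, and the Banach--Stone theorem makes it a composition with a homeomorphism, so that $U_0:\wbo\to\wbt$ is an algebra isomorphism. Applying the description of isomorphisms between admissible quadruples from \cite{hots}, $U_0$ is a composition operator of type BJ: there are a homeomorphism $\tau:Y_2\to Y_1$ and a continuous $\varphi:X_2\times Y_2\to X_1$ with $\varphi(\cdot,y):X_2\to X_1$ a homeomorphism for each $y\in Y_2$ such that $U_0(F)(x,y)=F(\varphi(x,y),\tau(y))$ (one sees the second coordinate is a fixed $\tau$ because $U_0$ maps $\ker D_1=1\otimes C(Y_1)$ onto $\ker D_2=1\otimes C(Y_2)$, and restricting there yields an algebra isomorphism $C(Y_1)\to C(Y_2)$). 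Combining with $U=(1\otimes h)U_0$ gives $U(F)(x,y)=h(y)F(\varphi(x,y),\tau(y))$ for all $(x,y)\in X_2\times Y_2$ and $F\in\wbo$, as claimed.
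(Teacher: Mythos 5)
Your route is the same as the paper's: establish Proposition \ref{absolute value 1}, pass to $U_0=(1\otimes\bar h)U$, use Jarosz's theorem (\cite{ja}) to upgrade $U_0$ to a sup-norm isometry, then Stone--Weierstrass and Banach--Stone to see that $U_0$ is an algebra isomorphism, and finally the composition-operator description from \cite{hots}; in the main case this is correct. But there is a genuine gap in the case analysis. Everything you propose for the crucial first step (Lemmata \ref{1}, \ref{2} and \ref{0.3}, and hence Proposition \ref{absolute value 1}) is only available when $X_2$ is not a singleton: Lemma \ref{1} needs a nonconstant $b_0\in B_2$ so that $F_0=b_0\otimes f_0\notin\ker D_2$ and $\|D(F_0)\|_\infty\neq 0$, and if $X_2$ is a singleton then $\wbt=1\otimes C(Y_2)$ and $D_2\equiv 0$, so no such $F_0$ exists and the extreme-point/measure machinery you sketch has nothing to work with. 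The theorem still holds in the degenerate cases, but they require separate (short) arguments that your proposal omits: the paper treats the case where both $X_1$ and $X_2$ are singletons directly by the classical Banach--Stone theorem on $C(Y_j)$, and disposes of the possibility ``$X_1$ not a singleton but $X_2$ a singleton'' by applying the main-case representation to $U^{-1}$, which forces $X_2$ to be homeomorphic to $X_1$.

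A secondary, smaller issue: you attribute to \cite{hots} more than the cited result gives. Proposition 3.2 there (as invoked in the paper, after checking $U_0(1\otimes C(Y_1))\subset 1\otimes C(Y_2)$ exactly as you do) only yields \emph{continuous} maps $\varphi:X_2\times Y_2\to X_1$ and $\tau:Y_2\to Y_1$ with $U_0(F)(x,y)=F(\varphi(x,y),\tau(y))$. That $\tau$ is a homeomorphism and each $\varphi(\cdot,y)$ is bijective (hence a homeomorphism, by compactness) is not part of that citation; it is obtained, as in the paper, by applying the same representation to $U_0^{-1}$ and composing the two formulas to get $\tau_1\circ\tau=\operatorname{id}$, $\tau\circ\tau_1=\operatorname{id}$ and the corresponding identities for $\varphi$ and $\varphi_1$. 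This extra step is routine but needs to be stated; your parenthetical about $\ker D_1$ mapping onto $\ker D_2$ explains only why the second coordinate depends on $y$ alone, not why the maps are invertible.
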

In short a surjective isometry between admissible quadruples of type L is canonical, that is, a weighted composition operator of a specific form: the homeomorphism $X_2\times Y_2 \to X_1\times Y_1, \,\,(x,y)\mapsto (\varphi(x,y),\tau(y))$ has the second coordinate that depends only on the second variable $y\in Y_2$. 
A composition operator induced by such a homeomorphism is said to be of type BJ in \cite{ho,hots} after the study of Botelho and Jamison \cite{bjRocky}. That every composition operator on an admissible quadruple  $(X, E, B, \widetilde{B})$ onto itself is of type BJ indicates that $B$ and $E$ are totally different Banach algebras.
\section{The form of $U(1_{\wb_1})$}
Throughout this section we assume 
that $U:\wbo \to \wbt$ is a surjective linear isometry satisfying all the hypotheses of Theorem \ref{main} without further mention. For the simplicity of the proof of Theorem \ref{main} we assume that $X_2$ is not a singleton in this section. Our main purpose in this section is to prove Proposition \ref{absolute value 1}, which is a crucial part of proof of Theorem \ref{main}.
\begin{prop}\label{absolute value 1}
There exists $h\in C(Y_2)$ with $|h|=1$ on $Y_2$ such that $U(1_{\wb_1})=1_{B_2}\otimes h$.
\end{prop}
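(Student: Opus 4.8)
Set $G:=U(1_{\wb_1})$. Since $1_{\wb_1}=1_{B_1}\otimes 1_{C(Y_1)}\in\ker D_1$, we have $\|1_{\wb_1}\|=\|1_{\wb_1}\|_{\infty(X_1\times Y_1)}+\nn 1_{\wb_1}\nn=1+0=1$, so $\|G\|_{\infty(X_2\times Y_2)}+\nn G\nn=1$. Hence it is enough to prove that $|G(x,y)|=1$ for all $(x,y)\in X_2\times Y_2$ and that $G$ is independent of its first variable: then $\nn G\nn=\|D_2(G)\|_{\infty(\M_2)}=0$, so $G\in\ker D_2=1_{B_2}\otimes C(Y_2)$, and writing $G=1_{B_2}\otimes h$ with $h:=G(x_0,\cdot)$ for a fixed $x_0\in X_2$ we obtain $h\in C(Y_2)$ (as $G\in\wbt\subset C(X_2\times Y_2)$) with $|h|\equiv 1$ on $Y_2$.

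My approach is to read $G$ off from the extreme functionals of the closed unit ball $B_{\wbt^*}$ via the adjoint $U^*$. Embedding $\wbj$ isometrically into $C(X_j\times Y_j)\oplus_1 C(\M_j)$ by $F\mapsto(F,D_j(F))$, the Arens--Kelley description of the extreme points of the dual ball of a $C(K)$-space together with Milman's partial converse to Krein--Milman shows that every $\phi\in\operatorname{ext}B_{\wbj^*}$ is a weak$^*$-limit of functionals $F\mapsto\lambda F(x,y)+\mu D_j(F)(m)$ with $(x,y)\in X_j\times Y_j$, $m\in\M_j$, $|\lambda|=|\mu|=1$. Because $U$ is a surjective isometry, $U^*$ is a weak$^*$-continuous surjective linear isometry, so it carries $\operatorname{ext}B_{\wbt^*}$ bijectively onto $\operatorname{ext}B_{\wbo^*}$. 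Two properties of $1_{\wb_1}$ are then decisive: (a) $D_1(1_{\wb_1})=0$, so on the approximating functionals above $\lambda\,1_{\wb_1}(x,y)+\mu\,D_1(1_{\wb_1})(m)=\lambda$, a unimodular scalar; and (b) the elements $1_{B_1}\otimes g$ with $\|g\|_\infty=1$ are peak elements, so that for each $(x,y)\in X_2\times Y_2$ the evaluation $\operatorname{ev}_{(x,y)}\colon F\mapsto F(x,y)$ has norm $1$ on $\wbt$ and attains it precisely on the unimodular multiples of the $1_{B_2}\otimes g$ with $\|g\|_{\infty(Y_2)}=|g(y)|=1$. In particular $U^*\operatorname{ev}_{(x,y)}$ is a norm-$1$, norm-attaining functional on $\wbo$.

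Next I would represent $U^*\operatorname{ev}_{(x,y)}$ by a regular Borel probability measure $\Omega=\Omega_{(x,y)}$ carried by the weak$^*$-compact set of candidate functionals above; norm-attainment of $U^*\operatorname{ev}_{(x,y)}$ at the points $U^{-1}(1_{B_2}\otimes g)$ restricts the support of $\Omega$, and evaluating the barycenter identity at $1_{\wb_1}$ gives $G(x,y)=\int\phi(1_{\wb_1})\,d\Omega(\phi)$ with $|\phi(1_{\wb_1})|\le 1$, whence $|G(x,y)|\le 1$. To upgrade this to $|G(x,y)|=1$ and to eliminate the dependence on $x$, I would use the self-adjointness hypothesis $\cno$ to reduce to real-valued test functions, parametrize the ``evaluation part'' of these extreme functionals by $Y_1\times\mathbb{T}$ (the point of $Y_1$ from the evaluation, the circle factor from the unimodular multiplier $\lambda$), and recast norm-attainment as an identity of the form $\int_{K\times\mathbb{T}}\gamma\,g(y)\,d\mu(y,\gamma)=\gamma_0 c\,\mu(K\times\mathbb{T})$. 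Lemmata~\ref{0.1} and \ref{0.2}, fed into Lemma~\ref{0.3}, then force the support of the representing measure down onto a set on which the $D_1$-components are inert and $\lambda$ is constant; combining this across enough test functions $g$ should yield that $G(x,y)$ is a single unimodular scalar depending only on $y$, giving $G=1_{B_2}\otimes h$ with $|h|\equiv 1$.

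\textbf{Main obstacle.} The delicate point -- precisely where the argument of \cite[Example 8]{jp} is incomplete -- is that the evaluation functionals are \emph{not} extreme in $B_{\wbt^*}$ (each $\operatorname{ev}_{(x,y)}$ is the midpoint of $\operatorname{ev}_{(x,y)}\pm\mu\,\delta_m\circ D_2$ for any $m$ with $\delta_m\circ D_2\neq 0$), while the genuine extreme points carried by the seminorm $\nn\cdot\nn=\|D_2(\cdot)\|_\infty$ are neither norm-attaining nor able to detect $1_{\wb_1}$. One therefore cannot simply assert that $U^*$ sends evaluation functionals to evaluation functionals; the whole weight of the proof rests on showing that the representing measure of $U^*\operatorname{ev}_{(x,y)}$ does not spread over these ``$D$-type'' extreme functionals but collapses onto a single evaluation, and this is exactly what the regularity and support computations of Lemmata~\ref{0.1}--\ref{0.3} are designed to supply.
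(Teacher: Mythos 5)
Your setup is sound: the extreme points of the dual unit ball of $\wbj$ are indeed (restrictions of) functionals $F\mapsto\lambda F(u,v)+\mu D_j(F)(m)$ with $|\lambda|=|\mu|=1$, the adjoint of a surjective isometry permutes extreme points, and since $D_1(1_{\wb_1})=0$ every such functional takes a unimodular value at $1_{\wb_1}$; this is exactly how the paper begins, via the embedding $I_j(F)(x,y,m,\gamma)=F(x,y)+\gamma D_j(F)(m)$ and the Arens--Kelly theorem. The genuine gap is the step you yourself flag with ``should yield'': nothing in your scheme forces the representing measure $\Omega$ of $U^*\operatorname{ev}_{(x,y)}$ to have an essentially constant $\lambda$-coordinate, i.e.\ to upgrade the trivial bound $|G(x,y)|\le 1$ to $|G(x,y)|=1$. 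The only constraints your norm-attainment argument produces are of the form $\lambda F_g(u,v)+\mu D_1(F_g)(m)=g(y)$ for $\Omega$-a.e.\ candidate functional, where $F_g=U^{-1}(1_{B_2}\otimes g)$ is a completely unknown element of $\wbo$. Lemmata~\ref{0.1}--\ref{0.3} cannot digest such data: their hypotheses require explicit real-valued test functions with known peak sets, and in the paper they are applied on the \emph{target} side to functions $F_0=b_0\otimes f_0$ manufactured from $B_2\otimes C(Y_2)$, self-adjointness and Urysohn's lemma (Lemma~\ref{1}), never to inverse images under $U$.

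What is missing is the mechanism that disentangles the evaluation part of $U(1_{\wb_1})$ from its $D$-part. In the paper this is Lemma~\ref{2}: for the Choquet boundary point $(x_c,y_c,m_c,\gamma_c)$ of $I_2(\wbt)$ produced by Lemma~\ref{1} inside an arbitrary neighborhood of an arbitrary $(x_0,y_0)\in X_2\times Y_2$, the whole arc $(x_c,y_c,m_c,e^{i\theta}\gamma_c)$, $0<\theta<\pi/2$, remains in the Choquet boundary; pushing these extreme points through $\tilde U^*$ gives $|U(1)(x_c,y_c)+e^{i\theta}\gamma_c D(U(1))(m_c)|=1$ for all $\theta$ in an interval, which forces either $U(1)(x_c,y_c)=0$ with $|D(U(1))(m_c)|=1$, or $|U(1)(x_c,y_c)|=1$ with $D(U(1))(m_c)=0$, and the first alternative is excluded by $1=\|U(1)\|_\infty+\|D(U(1))\|_\infty$. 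Your proposal has no analogue of this rotation argument: with only one admissible phase per boundary point you get the single equation $|U(1)(x_c,y_c)+\gamma_cD(U(1))(m_c)|=1$, which does not separate the two terms---precisely the obstacle you describe in your last paragraph but do not overcome. Note also that the localization in Lemma~\ref{1} matters at the end: $\|D_2(U(1))\|_\infty=0$ alone gives $U(1)=1_{B_2}\otimes h$ with only $\max_{Y_2}|h|=1$, and it is the availability of suitable points $(x_c,y_c)$ in every neighborhood of every $(x_0,y_0)$ that yields $|h|\equiv 1$ on $Y_2$.
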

Lemma \ref{2} is crucial for the proof of Proposition \ref{absolute value 1}.  We prove Lemma \ref{2} by applying Choquet's theory (\cite{ph}) which studies the extreme point of the dual unit ball of the space of continuous functions with the supremum norms. To apply the theory we first define an isometry
 from $\widetilde{B_j}$ into a uniformly closed space of complex-valued continuous functions. 
 Let $j=1, 2$. Define a map
\[
I_j:\wbj \to C(X_j\times Y_j\times \M_j\times \mathbb{T})
\]
by $I_j(F)(x,y,m,\gamma)=F(x,y)+\gamma D_j(F)(m)$ for $F\in \wbj$ and $(x,y,m,\gamma)\in X_j\times Y_j\times M_j\times \mathbb{T}$. (Recall that $\mathbb{T}$ is the unit circle in the complex plane.) As $D_j$ is a complex linear map, so is $I_j$. Let $S_j=X_j\times Y_j\times M_j\times \mathbb{T}$. For simplicity we just write $I$ and $D$ instead of $I_j$ and $D_j$ without causing any confusion. For every $F\in \wbj$ the supremum norm $\|I(F)\|_{\infty}$ on $S_j$ of $I(F)$ is written as  
\begin{equation*}
\begin{split}
\|I(F)\|_\infty 
& =\sup\{|F(x,y)+\gamma D(F)(m)|:(x,y,m,\gamma)\in S_j\}\\
& =\sup\{|F(x,y)|:(x,y)\in X_j\times Y_j\}\\
&\qquad 
+\sup\{|D(F)(m)|:m\in \M_j\}\\
&=\|F\|_{\infty(X_j\times Y_j)}+\|D(F)\|_{\infty(\M)}.
\end{split}
\end{equation*}
The second equality follows by an inspection that $\gamma$ runs through the whole $\mathbb{T}$. 
It follows that
\[
\|I(F)\|_{\infty}=\|F\|_{\infty}
+\|D(F)\|_{\infty}=
 \|F\|
\]
for every $F\in \wbj$. Since $0= \| D(1)\|_{\infty}$, we have $D(1)=0$ and  $I(1)=1$. Hence $I$ is a complex-linear isometry with $I(1)=1$. In particular, $I(\wbj)$ is a complex-linear closed subspace of $C(S_j)$ which contains $1$. In general $I(\wbj)$ needs not separate the points of $S_j$.

It follows from the definition in \cite{ph} of the Choquet boundary $\ch I(\wbt)$ of $I(\wbt)$, we see that a point $p=(x,y,m,\gamma)\in X_2\times Y_2\times\mathfrak{M}\times \mathbb{T}$ is in $\ch I(\wbt)$ if the point evaluation $\phi_p$ at $p$ is an extreme point of the state space, or equivalently $\phi_p$ is an extreme point of the closed unit ball $(I(\wbt))^*_1$ of the dual space $(I(\wbt))^*$ of $I(\wbt)$.
\begin{lemma}\label{1}
Suppose that 
$(x_0,y_0)\in X_2\times Y_2$ and ${\mathfrak U}$ is an open neighborhood of $(x_0,y_0)$. Then there exists a function $F_0=b_0\otimes f_0\in \wbt$ with $0\le b_0\le 1$ on $X_2$ and $0\le f_0\le 1$ on $Y_2$ such that $F_0(x_0,y_0)=1$  and $F_0<1/2$ on $X_2\times Y_2\setminus \mathfrak{U}$. Furthermore there exists a point $(x_c,y_c,m_c,\gamma_c)$ in the Choquet boundary for $I_2(\wbt)$ such that $(x_c,y_c)\in {\mathfrak U}\cap (b^{-1}_0(1)\times f^{-1}_0 (1))$ and $\gamma_cD(F_0)(m_c)=\|D(F_0)\|_\infty\ne 0$. 
\end{lemma}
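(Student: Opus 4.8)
The plan is to construct $F_0$ explicitly as an elementary tensor and then extract the Choquet-boundary point by a peaking argument. First I would produce the function. Since $B_2$ is a unital point separating, self-adjoint subalgebra of $C(X_2)$ and is natural, it is a regular function algebra on $X_2$ in the sense that continuous functions in $B_2$ can be used to peak; concretely, using self-adjointness we may find $b_1\in B_2$ with $b_1(x_0)=1$ and $|b_1|$ small off a prescribed neighborhood, and then set $b_0=\min\{|b_1|^2,1\}$ suitably normalized, or more simply take a real function in $B_2$ with $0\le b_0\le 1$, $b_0(x_0)=1$, $b_0<1/2$ outside the $X_2$-projection of a slightly shrunk $\mathfrak U$. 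The same for $f_0\in C(Y_2)$ with $0\le f_0\le1$, $f_0(y_0)=1$, $f_0<1/2$ off the $Y_2$-projection. Then $F_0=b_0\otimes f_0\in B_2\otimes C(Y_2)\subset\wbt$ by the admissibility hypothesis, $F_0(x_0,y_0)=1$, $0\le F_0\le1$, and shrinking carefully one arranges $F_0<1/2$ on $X_2\times Y_2\setminus\mathfrak U$. Note $F_0$ is not constant (here $X_2$ is not a singleton, and $f_0$ can be taken non-constant if $Y_2$ is not a singleton; if $Y_2$ is a singleton then $b_0$ is non-constant), so $\nn F_0\nn\ne0$, i.e. $\|D(F_0)\|_{\infty(\M)}>0$: this uses condition $\cnt(2)$, $\ker D=1_{B_2}\otimes C(Y_2)$, together with the fact that a non-constant $F_0$ of the stated form is not in $1_{B_2}\otimes C(Y_2)$.

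Next I would locate the Choquet point. Consider $I_2(F_0)\in C(S_2)$. By the norm identity proved just above, $\|I_2(F_0)\|_\infty=\|F_0\|_{\infty(X_2\times Y_2)}+\|D(F_0)\|_{\infty(\M)}=1+\|D(F_0)\|_\infty$, and this supremum is attained exactly at points $(x,y,m,\gamma)$ with $F_0(x,y)=1$ and $\gamma D(F_0)(m)=\|D(F_0)\|_\infty$ (both summands must be simultaneously maximized, and the phases must align). The set $P$ of such points is a non-empty compact subset of $S_2$, and it is contained in $\mathfrak U\times\M\times\mathbb T$ (in fact in $(b_0^{-1}(1)\times f_0^{-1}(1))\times\M\times\mathbb T$, using $F_0=b_0\otimes f_0$ with $0\le b_0,f_0\le1$, so $F_0=1$ forces $b_0=f_0=1$). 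Now I invoke Choquet theory: the Choquet boundary $\ch I_2(\wbt)$ is a boundary for $I_2(\wbt)$, so the modulus of the function $I_2(F_0)$ attains its maximum at some point of $\ch I_2(\wbt)$; that point lies in $P$ and hence is the desired $(x_c,y_c,m_c,\gamma_c)$ with $(x_c,y_c)\in\mathfrak U\cap(b_0^{-1}(1)\times f_0^{-1}(1))$ and $\gamma_c D(F_0)(m_c)=\|D(F_0)\|_\infty\ne0$.

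The main obstacle I anticipate is the first step: arranging simultaneously all the inequalities for $F_0$ on the product $X_2\times Y_2$, since a general open neighborhood $\mathfrak U$ of $(x_0,y_0)$ need not be a box. The fix is to first pick a basic open box $V\times W\subset\mathfrak U$ containing $(x_0,y_0)$ and then build $b_0$ supported (in the sense of being $<1/2$) off $V$ and $f_0$ off $W$, so that $b_0(x)f_0(y)<1/2$ whenever $(x,y)\notin V\times W$, hence in particular off $\mathfrak U$; this requires $b_0,f_0\le1$ everywhere, which is easy to enforce. A secondary point is justifying $\|D(F_0)\|_\infty\ne0$, which, as noted, is exactly where $\cnt(2)$ enters; one must make sure the constructed $F_0$ genuinely fails to lie in $1_{B_2}\otimes C(Y_2)$, and the cleanest way is to ensure $b_0$ is non-constant on $X_2$ (possible since $X_2$ is not a singleton and $B_2$ separates points) so that $F_0=b_0\otimes f_0$ with $f_0(y_0)=1$ cannot have the form $1_{B_2}\otimes g$. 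The rest is a direct appeal to the definition of the Choquet boundary as a minimal boundary and to the already-established isometry property of $I_2$.
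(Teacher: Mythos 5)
Your overall route is the same as the paper's: reduce to a box neighborhood $\mathfrak{G}\times\mathfrak{H}\subset\mathfrak{U}$, build a peaking tensor $F_0=b_0\otimes f_0$ with $b_0$ non-constant, use $\ker D=1_{B_2}\otimes C(Y_2)$ to get $\|D(F_0)\|_\infty\ne 0$, and invoke the fact (Phelps, Proposition 6.3) that $|I_2(F_0)|$ attains its maximum $\|F_0\|_\infty+\|D(F_0)\|_\infty$ at a point of the Choquet boundary, where the triangle inequality forces $F_0(x_c,y_c)=1$ (hence $b_0(x_c)=f_0(y_c)=1$, so $(x_c,y_c)\in\mathfrak{U}$) and $\gamma_cD(F_0)(m_c)=\|D(F_0)\|_\infty$. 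That part of your argument is correct and is exactly what the paper does.

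The one step that does not work as written is the construction of $b_0$ inside $B_2$. Setting $b_0=\min\{|b_1|^2,1\}$ fails because $B_2$ is an algebra, not a lattice: the pointwise minimum with $1$ need not belong to $B_2$. Your fallback, ``more simply take a real function in $B_2$ with $0\le b_0\le 1$, $b_0(x_0)=1$, $b_0<1/2$ outside $\mathfrak{G}$,'' simply asserts the existence of the peaking function, and that is precisely the nontrivial point: $B_2$ is only uniformly \emph{dense} in $C(X_2)$ (by Stone--Weierstrass), and a uniform approximation of a Urysohn function will not satisfy the exact constraints $b_0(x_0)=1$ and $0\le b_0\le 1$, which are needed verbatim in the statement and later in Lemma \ref{2}. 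The paper repairs this with a small but essential device: take a Urysohn function $v$ with $v(x_0)=0$, $0\le v\le 4/5$, $v=4/5$ off $\mathfrak{G}$, approximate it within $1/40$ by $u_1\in B_2\cap C_{\mathbb R}(X_2)$, recenter $u=u_1-u_1(x_0)$ (so $u(x_0)=0$, $-1\le u\le 1$, $u^2>1/2$ off $\mathfrak{G}$), and set $b_0=1-u^2\in B_2$; this restores the exact equalities while keeping membership in $B_2$. With that substitution (and the same remark that $\mathfrak{G}$ can be shrunk so that $b_0$ is non-constant, using that $X_2$ is not a singleton), your proof goes through and coincides with the paper's.
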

\begin{proof}
Suppose that $\mathfrak{G}$ and $\mathfrak{H}$ are open neighborhoods of $x_0$ and $y_0$ respectively such that $\mathfrak{G\times H}\subset \mathfrak{U}$. Since $B_2$ is unital, self-adjoint and separates the points of $X_2$, the Stone-Weierstrass theorem asserts that $B_2$ is uniformly dense in $C(X_2)$. By the Urysohn's lemma there exists $v\in C(X_2)$ such that $0\le v\le 4/5$ on $X_2$, $v(x_0)=0$, and $v=4/5$ on $X_2\setminus \mathfrak{G}$. As $B_2$ is self-adjoint and uniformly dense in $C(X_2)$, there exists $u_1\in B_2\cap C_{\mathbb R}(X_2)$ such that $\|v-u_1\|_\infty<1/40$. Put $u=u_1-u_1(x_0)$. 
By a simple calculation we infer that $u\in B_2$ with $u(x_0)=0$ and $-1\le u\le 1$ on $X_2$ and $u^2>1/2$ on $X_2\setminus \mathfrak{G}$. Then $b_0=1-u^2\in B_2$, $0\le b_0\le 1=b_0(x_0)$ on $X_2$, and $b_0< 1/2$ on $X_2\setminus \mathfrak{G}$. We may suppose that $b_0$ is not constant as we assume that $X_2$ is not a singleton. In a similar way, there exists $f_0\in C(Y_2)$ with $0\le f_0\le 1=f_0(y_0)$ and $f_0<1/2$ on $Y_2\setminus \mathfrak{H}$. Put $F_0=b_0\otimes f_0$. Hence we have that $0\le F_0\le 1=F_0(x_0,y_0)$ and $F_0<1/2$ on $X_2\times Y_2\setminus \mathfrak{U}$. 
Since $B_2\otimes C(Y_2)\subset \wbt$ by Definition \ref{aqL}, 
we infer that $F_0\in \wbt$.

By Proposition 6.3 in \cite{ph} there exists $c=(x_c,y_c,m_c,\gamma_c)$ in the Choquet boundary for $I(\wbt)$ with
\[
\|I(F_0)\|_{\infty}=|I(F_0)(c)|.
\]
We see that 
\begin{multline}\label{sub5}
\|I(F_0)\|_{\infty}=|I(F_0)(c)|=|F_0(x_c,y_c)+\gamma_cD(F_0)(m_c)| \\
\le |F_0(x_c,y_c)|+|D(F_0)(m_c)|\le \|F_0\|_\infty + \|D(F_0)\|_\infty
=\|I(F_0)\|_{\infty}.
\end{multline}
As $0\le F_0\le 1=\|F_0\|_\infty$ we have by \eqref{sub5} that $F_0(x_c,y_c)=1=\|F_0\|_\infty$.  Thus $(x_c,y_c)\in {\mathfrak U}\cap (b^{-1}_0(1)\times f^{-1}_0(1))$. Applying that $F_0(x_c,y_c)=1$ and \eqref{sub5}, we also have that  $\gamma_cD(F_0)(m_c)=|D(F_0)(m_c)|=\|D(F_0)\|_\infty$.
As $b_0$ is not a constant function, we have $F_0=b_0\otimes f_0\not\in 1\otimes C(Y_2)=\operatorname{ker} D$. Hence we have $\|D(F_0)\|_\infty \ne 0$, so that $\|D(F_0)\|_\infty> 0$. As $F_0$ is real-valued, so is $D(F_0)$ by the condition $\cnt$ of Definition \ref{aqL}. 
Hence we see that  $\gamma_cD(F_0)(m_c)>0$ and $\gamma_c=1$ or $-1$.  
\end{proof}
Note that $\gamma_c=1$ if $D(F_0)(m_c)>0$ and $\gamma_c=-1$ if $D(F_0)(m_c)<0$.
\begin{lemma}\label{2}
Suppose that 
$(x_0,y_0)\in X_2\times Y_2$ and ${\mathfrak U}$ is an open neighborhood of $(x_0,y_0)$. Let $F_0=b_0\otimes f_0\in \wbt$ be a function such that $0\le b_0\le 1$ on $X_2$, $0\le f_0\le 1$ on $Y_2$, $F_0(x_0,y_0)=1$, and $F_0<1/2$ on $X_2\times Y_2\setminus \mathfrak{U}$. Let $(x_c,y_c,m_c,\gamma_c)$ be a point in the Choquet boundary for $I_2(\wbt)$ such that $(x_c,y_c)\in {\mathfrak U}\cap (b^{-1}_0(1)\times f^{-1}_0 (1))$ and $\gamma_cD(F_0)(m_c)=\|D(F_0)\|_\infty\ne 0$. (Such functions and a point $(x_c,y_c,m_c,\gamma_c)$ exist by Lemma \ref{1}.)
 Then for any $0<\theta<\pi/2$, 
$c_\theta=(x_c,y_c,m_c,e^{i\theta}\gamma_c)$ is also in the Choquet boundary for $I(\wbt)$. 
\end{lemma}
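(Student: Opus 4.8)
The plan is to prove that the point evaluation $\phi_{c_\theta}$ is an extreme point of the closed unit ball $(I(\wbt))^*_1$; as $\phi_{c_\theta}(1)=1$ this is the same as being extreme in the state space, so I would start from a splitting $\phi_{c_\theta}=\tfrac12(\psi_1+\psi_2)$ with $\psi_1,\psi_2\in(I(\wbt))^*_1$. Evaluating at $1$ forces $\psi_i(1)=1$, so each $\psi_i$ is a state; represent it by a regular Borel probability measure $\mu_i$ on $S_2=X_2\times Y_2\times\M_2\times\mathbb{T}$ and set $\mu=\tfrac12(\mu_1+\mu_2)$, a representing measure for $\phi_{c_\theta}$. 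The goal is then to deduce $\psi_1=\psi_2=\phi_{c_\theta}$.

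First I would extract from $F_0$ everything it gives. Testing $\mu$ against $F_0$ yields $\int_{S_2}\bigl(F_0(x,y)+\gamma D(F_0)(m)\bigr)\,d\mu=1+e^{i\theta}\|D(F_0)\|_{\infty}$, using $F_0(x_c,y_c)=1$, $\gamma_cD(F_0)(m_c)=\|D(F_0)\|_\infty$ and $\gamma_c^2=1$. Since $F_0$ is real-valued and $\mu\ge0$, comparing imaginary parts pins $\operatorname{Im}\int\gamma D(F_0)(m)\,d\mu=\sin\theta\,\|D(F_0)\|_\infty$; together with $\bigl|\int\gamma D(F_0)(m)\,d\mu\bigr|\le\|D(F_0)\|_\infty$ this bounds the real part by $\cos\theta\,\|D(F_0)\|_\infty$ in modulus, and substituting that into the real part of the identity together with $0\le F_0\le1$ forces $\int F_0\,d\mu=1$ and $\int\gamma D(F_0)(m)\,d\mu=e^{i\theta}\|D(F_0)\|_\infty=\int|D(F_0)(m)|\,d\mu$. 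Because $0\le F_0=b_0\otimes f_0\le1$ and $\mu=\tfrac12(\mu_1+\mu_2)$, the same identities pass to each $\mu_i$, so $\suppm\mu_i\subset b_0^{-1}(1)\times f_0^{-1}(1)\times\M_2\times\mathbb{T}$. Testing against $1_{B_2}\otimes f$ ($f\in C(Y_2)$), on which $D$ vanishes by \cnt, gives $\int f\,d\mu_i=f(y_c)$, so the $Y_2$-marginal of each $\mu_i$ is $\delta_{y_c}$. Finally, pushing $\mu_i$ forward to $\M_2\times\mathbb{T}$ and applying Lemma \ref{0.3} with $K=\M_2$, $g=D(F_0)$ (real by \cnt, $|g|\le c:=\|D(F_0)\|_\infty$) and $\gamma_0=e^{i\theta}$, I obtain $\suppm\mu_i\subset X_2\times Y_2\times\bigl((D(F_0)^{-1}(c)\times\{e^{i\theta}\})\cup(D(F_0)^{-1}(-c)\times\{-e^{i\theta}\})\bigr)$; in particular $\gamma\in\{e^{i\theta},-e^{i\theta}\}$ on $\suppm\mu_i$, so $\mu_i=\mu_i^++\mu_i^-$ with $\gamma\equiv e^{i\theta}$, $D(F_0)(m)\equiv c$ on $\mu_i^+$ and $\gamma\equiv-e^{i\theta}$, $D(F_0)(m)\equiv-c$ on $\mu_i^-$.

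The crux is a rotation/transport step. Let $R\colon S_2\to S_2$ be the homeomorphism $R(x,y,m,\gamma)=(x,y,m,e^{-i\theta}\gamma)$, so $R(c_\theta)=(x_c,y_c,m_c,\gamma_c)=:c$. Writing the representing identity for $\mu$ through $\mu=\mu^++\mu^-$ (with $\mu^\pm=\tfrac12(\mu_1^\pm+\mu_2^\pm)$) as
\[
\ell(F)+e^{i\theta}\bigl(P_1(F)-P_2(F)\bigr)=F(x_c,y_c)+e^{i\theta}\gamma_cD(F)(m_c),\qquad F\in\wbt,
\]
where $\ell(F)=\int F(x,y)\,d\mu$ and $P_j(F)=\int D(F)(m)\,d\mu^{(j)}$, I would replace $F$ by $\bar F\in\wbt$ and take complex conjugates, using $\ell(\bar F)=\overline{\ell(F)}$, $P_j(\bar F)=\overline{P_j(F)}$ (positivity), $D(\bar F)=\overline{D(F)}$ (from \cno, \cnt, via $F=\operatorname{Re}F+i\operatorname{Im}F$) and $\gamma_c=\pm1$, to get the same identity with $e^{-i\theta}$ in place of $e^{i\theta}$. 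Subtracting and dividing by $e^{i\theta}-e^{-i\theta}\ne0$ gives $P_1(F)-P_2(F)=\gamma_cD(F)(m_c)$, hence $\ell(F)=F(x_c,y_c)$ for every $F$. Therefore
\[
\int(I(F)\circ R)\,d\mu=\ell(F)+e^{-i\theta}e^{i\theta}\bigl(P_1(F)-P_2(F)\bigr)=F(x_c,y_c)+\gamma_cD(F)(m_c)=I(F)(c),
\]
so $R_*\mu$ represents $\phi_c$, and likewise each $R_*\mu_i$ is a state with $R_*\mu=\tfrac12(R_*\mu_1+R_*\mu_2)$.

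The two extremality facts now finish the job. Since $c\in\ch I(\wbt)$, $\phi_c$ is extreme in the state space, so $R_*\mu_1=R_*\mu_2=\phi_c$, which unravels to $\ell_i(F)+\bigl(P_1^{(i)}(F)-P_2^{(i)}(F)\bigr)=F(x_c,y_c)+\gamma_cD(F)(m_c)$ for the corresponding $\ell_i,P_j^{(i)}$. Separately, each $\ell_i$ is a state on $\wbt\subset C(X_2\times Y_2)$ with $\tfrac12(\ell_1+\ell_2)=\phi_{(x_c,y_c)}$; since $\wbt$ is self-adjoint and point separating, it is uniformly dense in $C(X_2\times Y_2)$ by Stone--Weierstrass, so $\phi_{(x_c,y_c)}$ is extreme in the state space of $\wbt$ and $\ell_1=\ell_2=\phi_{(x_c,y_c)}$. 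Inserting this back gives $P_1^{(i)}(F)-P_2^{(i)}(F)=\gamma_cD(F)(m_c)$, whence $\psi_i(I(F))=\ell_i(F)+e^{i\theta}\bigl(P_1^{(i)}(F)-P_2^{(i)}(F)\bigr)=F(x_c,y_c)+e^{i\theta}\gamma_cD(F)(m_c)=\phi_{c_\theta}(I(F))$; thus $\psi_1=\psi_2=\phi_{c_\theta}$ and $c_\theta\in\ch I(\wbt)$. The main obstacle is the transport step: to make "$c$ is a Choquet point" usable one must first squeeze out of $F_0$ enough to confine $\suppm\mu_i$ to finitely many values of $\gamma$ (the imaginary-part estimate feeding Lemma \ref{0.3}), and then run the conjugation-symmetry argument to disentangle the $\ell$-part from the $D(\cdot)(m_c)$-part, so that $R_*\mu$ is genuinely a representing measure for $\phi_c$.
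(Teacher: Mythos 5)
Your proposal is correct and follows essentially the same route as the paper's proof: Hahn--Banach extension plus Riesz representation, localization of the supports via $F_0$ together with Lemmata \ref{0.1} and \ref{0.3}, the rotation $\gamma\mapsto e^{-i\theta}\gamma$ of the $\mathbb{T}$-coordinate (the paper's measures $\nu_j$) to transport the splitting to the known Choquet point $(x_c,y_c,m_c,\gamma_c)$, and Stone--Weierstrass to disentangle the function part from the derivative part and conclude $\psi_1=\psi_2=\phi_{c_\theta}$. The differences are only cosmetic: you separate $\ell$ from $P_1-P_2$ by evaluating the identity at $\bar F$ and conjugating (equivalent to the paper's comparison of real and imaginary parts for real-valued $F$, both resting on self-adjointness and $D$ preserving real-valuedness), you identify the $X_2\times Y_2$-marginals as $\delta_{(x_c,y_c)}$ directly from uniform density of $\wbt$ instead of via the paper's peaking family $P$, and your intermediate $\delta_{y_c}$-marginal remark is never actually needed.
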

\begin{proof}
Let $\theta$ be $0<\theta<\pi/2$. The point evaluation $\phi_\theta(I(F))=F(x_c,y_c)+e^{i\theta}\gamma_cD(F)(m_c)$ at $c_\theta$ is well defined for $I(F)\in I(\wbt)$ since $I$ is injective. We prove that the point evaluation $\phi_\theta$  is an extreme point of the closed unit ball $I(\wbt)^*_1$ of the dual space $I(\wbt)^*$ of $I(\wbt)$. Suppose that $\phi_\theta=\frac12(\phi_1+\phi_2)$ for $\phi_1,\phi_2\in I(\wbt)^*$ with $\|\phi_1\|=\|\phi_2\|=1$, where $\|\cdot\|$ denotes the operator norm here. Let $\check{\phi_j}$ be a Hahn-Banach extension of $\phi_j$ to $C(X_2\times Y_2\times\mathfrak{M}_2\times \mathbb{T})$ for each $j=\theta, 1,2$. By the Riesz-Markov-Kakutani representation theorem there exists a complex regular Borel measure $\mu_j$ on $X_2\times Y_2\times\mathfrak{M}_2\times \mathbb{T}$ with $\|\mu_j\|=1$ which represents $\check{\phi_j}$ for $j=\theta,1,2$ respectively. In particular, we have
\[
\int I(F)d\mu_j=\phi_j(I(F)),\qquad I(F)\in I(\wbt)
\]
for $j=\theta,1,2$. As $\int 1d\mu_\theta=\phi_\theta(1)=1$ we see that $\mu_\theta$ is a probability measure. By the equation 
\[
1=\int 1d\mu_\theta=\frac12\int 1d\mu_1+\frac12\int 1d\mu_2
\]
we see that $\mu_1$ and $\mu_2$ are also probability measures.

We prove that the support $\suppmj$ of the measure  $\mu_j$ satisfies
\begin{equation}\label{sppmj}
\suppmj\subset b^{-1}_0(1)\times f^{-1}_0(1)\times\left\{(K_1\times \{e^{i\theta}\gamma_c\})\cup (K_2\times \{-e^{i\theta}\gamma_c\})\right\},
\end{equation}
where $K_1=D(F_0)^{-1}(D(F_0)(m_c))$ and $K_2=D(F_0)^{-1}(-D(F_0)(m_c))$, for $j=\theta,1,2$. Note that $m_c\in K_1$ while $K_2$ may be empty. Note also that $K_1\cap K_2=\emptyset$ since $|D(F_0)(m_c)|=\|D(F_0)\|_\infty\ne 0$. We first consider the case for $j=\theta$. As $(x_c, y_c)\in b^{-1}_0(1)\times f^{-1}_0(1)$ we have
\[
\phi_\theta(I(F_0))=F_0(x_c,y_c)+e^{i\theta}\gamma_cD(F_0)(m_c)
=1+ e^{i\theta}\gamma_cD(F_0)(m_c).
\]
As $\phi_\theta(I(F_0))=\int I(F_0)d\mu_\theta$ we have
\begin{multline*}
1+ e^{i\theta}\gamma_cD(F_0)(m_c)\\
=\int F_0(x,y)d\mu_\theta(x,y,m,\gamma) + \int \gamma D(F_0)(m)d\mu_\theta(x,y,m,\gamma).
\end{multline*}
Note that $0\le \int F_0(x,y)d\mu_\theta \le 1$ since $0\le F_0\le 1$ and $\mu_\theta$ is a probability measure. As $\gamma_cD(F_0)(m_c)=\|D(F_0)\|_\infty$, we have 
\[
\left|\int\gamma D(F_0)(m)d\mu_\theta\right|\le \gamma_cD(F_0)(m_c).
\]
Taking into account that $0<\theta<\pi/2$ we have by an elementary calculation that 
\begin{equation}\label{(a)}
1=\int F_0(x,y)d\mu_\theta,
\end{equation}
\begin{equation}\label{(b)}
e^{i\theta}\gamma_cD(F_0)(m_c)
=
\int \gamma D(F_0)(m)d\mu_\theta.
\end{equation}
Since $\mu_\theta$ is a regular Borel measure, $\mu_\theta(L)=0$ for any Borel set $L$ with $L\cap \suppmthe=\emptyset$. Hence we have $\int Gd\mu_\theta=\int_{\suppmthe} Gd\mu_\theta$ for every $G\in C(X_2\times Y_2\times \mathfrak{M}_2\times \mathbb{T})$. Then by the equality \eqref{(a)} we have 
\[
1=\int_{\suppmthe} F_0(x,y)d\mu_\theta.
\]
As $0\le F_0\le 1$ we have by Lemma \ref{0.1} that 
\begin{equation}\label{guruguru}
\suppmthe \subset F_0^{-1}(1)\times\mathfrak{M}_2\times\mathbb{T}
=
b^{-1}_0(1)\times f^{-1}_0(1)\times\mathfrak{M}_2\times\mathbb{T}.
\end{equation}
Letting $K=X_2\times Y_2\times \mathfrak{M}$, $g=1_{C(X_2\times Y_2)}\otimes D(F_0)$, and 
applying Lemma \ref{0.3} to the equation \eqref{(b)} we get
\begin{multline*}
\suppmthe \subset 
\left\{X_2\times Y_2\times K_1\times\{e^{i\theta}\gamma_c\}\right\}\cup\left\{ X_2\times Y_2\times K_2\times\{-e^{i\theta}\gamma_c\}\right\} \\
=
X_2\times Y_2\times\left\{(K_1\times\{e^{i\theta}\gamma_c\})\cup (K_2\times \{-e^{i\theta}\gamma_c\})\right\}.
\end{multline*}
Combining this inclusion with \eqref{guruguru} we infer that the inclusion \eqref{sppmj} holds
for $\mu_\theta$. In order to prove the corresponding inclusion for $\mu_j$ for $j=1,2$, we first have
\begin{multline*}
1+e^{i\theta}\gamma_c D(F_0)(m_c)
=
\phi_\theta (I(F_0))\\
=
\int I(F_0)d\frac{\mu_1+\mu_2}{2}+\int \gamma D(F_0)d\frac{\mu_1+\mu_2}{2}
\end{multline*}
by the equation $\phi_\theta(I(F_0))=\frac12\left(\phi_1(I(F_0))+\phi_2(I(F_0))\right)$.
Using a similar argument to that of 
$\mu_\theta$ for $\frac{\mu_1+\mu_2}{2}$ we get
\[
\suppm(\frac{\mu_1+\mu_2}{2}) \subset b^{-1}_0(1)\times f^{-1}_0(1)\times\left\{(K_1\times\{e^{i\theta}\gamma_c\})\cup (K_2\times \{-e^{i\theta}\gamma_c\})\right\}.
\]
As $\mu_1$ and $\mu_2$ are positive measures we have the inclusion \eqref{sppmj} for $j=1,2$.

Next we prove equations
\begin{equation}\label{star1}
F(x_c,y_c)=\int F(x,y)d\mu_\theta
\end{equation}
and
\begin{multline}\label{star2}
D(F)(m_c)=(e^{i\theta}\gamma_c)^{-1}\int \gamma D(F)(m)d\mu_\theta
\\
=
\int_{L_1}D(F)(m)d\mu_\theta
-
\int_{L_2}D(F)(m)d\mu_\theta
\end{multline}
for every $F\in \wbt$, where $L_j=b^{-1}_0(1)\times f^{-1}_0(1)\times K_j\times \{(-1)^{j+1}e^{i\theta}\gamma_c\}$ for $j=1,2$. 
We first show \eqref{star1} and \eqref{star2} for
 a real-valued function $F\in \wbt$. 
Suppose that $F\in \wbt\cap C_{\mathbb R}(X_2\times Y_2)$. Then we have
\begin{equation}\label{grgr}
\begin{split}
F(x_c,y_c)&+e^{i\theta}\gamma_c D(F)(m_c)
 = \phi_\theta(I(F)) \\
& = \int F(x,y)d\mu_\theta + \int \gamma D(F)(m)d\mu_\theta \\
& = \int F(x,y)d\mu_\theta + \int_{L_1} \gamma D(F)(m)d\mu_\theta +\int_{L_2} \gamma D(F)(m)d\mu_\theta \\
& = \int F(x,y)d\mu_\theta  \\
& \qquad
+e^{i\theta}\gamma_c \left(
\int_{L_1}D(F)(m)d\mu_\theta -
\int_{L_2}D(F)(m)d\mu_\theta \right).
\end{split}
\end{equation}
Note that $F(x_c,y_c)$, $D(F)(m_c)$, $\int F(x,y)d\mu_\theta$, $\int_{L_j}D(F)(m)d\mu_\theta$ 
for $j=1,2$ are all real numbers since $F$ and $D(F)$ are real-valued functions (see Definition \ref{aqL}). We also note that $e^{i\theta}\gamma_c\not\in \mathbb{R}$ since $0<\theta<\pi/2$ and $\gamma_c=1$ or $-1$. 
Then comparing the real and the imaginary parts of the equation \eqref{grgr} we have \eqref{star1} and \eqref{star2} 
for every $F\in \wbt \cap C_{\mathbb{R}}(X_2\times Y_2)$. 
Take a general function 
 $F\in \wbt$. We have assumed that $\wbt$ is self-adjoint by the condition $\cno$ in Definition \ref{aqL}, therefore the real part $\operatorname{Re}F$ and the imaginary part $\operatorname{Im}F$ of $F$ both are in $\wbt \cap C_{\mathbb{R}}(X_2\times Y_2)$. Then 
by \eqref{star1}  for real-valued maps,
 we have
\[
\operatorname{Re}F(x_c,y_c)=\int\operatorname{Re}F(x,y)d\mu_\theta,
\]
\[
\operatorname{Im}F(x_c,y_c)=\int\operatorname{Im}F(x,y)d\mu_\theta.
\]
Hence we have
\[
F(x_c,y_c)=\int\operatorname{Re}F(x,y)d\mu_\theta+i\int\operatorname{Im}F(x,y)d\mu_\theta=\int F(x,y)d\mu_\theta.
\]
Thus \eqref{star1} is proved for every $F\in \wbt$. As $D$ is complex-linear we have by \eqref{star2} for real-valued functions that
\begin{multline*}
D(F)(m_c)=D(\operatorname{Re}F)(m_c)+iD(\operatorname{Im}F)(m_c) \\
=(e^{i\theta}\gamma_c)^{-1}\int \gamma D(\operatorname{Re}F)(m)d\mu_\theta
+i(e^{i\theta}\gamma_c)^{-1}\int\gamma D(\operatorname{Im}F)(m)d\mu_\theta \\
= (e^{i\theta}\gamma_c)^{-1}\int \gamma D(F)d\mu_\theta \\
=
\int_{L_1}D(F)(m)d\mu_\theta
-
\int_{L_2}D(F)(m)d\mu_\theta.
\end{multline*}
Thus we have just proved  \eqref{star2} for every $F\in \wbt$. 

For every $F\in \wbt$ we have
\begin{equation*}
\begin{split}
\phi_\theta (I(F)) &=\frac12 \left(\phi_1(I(F))+\phi_2(I(F))\right) \\
&=
\int F(x,y)d\frac{\mu_1+\mu_2}{2}+\int \gamma D(F)(m)d\frac{\mu_1+\mu_2}{2}.
\end{split}
\end{equation*}
By the same way as the proof of \eqref{star1} and \eqref{star2} we have 
\begin{equation}\label{star3}
F(x_c,y_c)=\int F(x,y)d\frac{\mu_1+\mu_2}{2}
\end{equation}
and
\begin{multline}\label{star4}
D(F)(m_c)=(e^{i\theta}\gamma_c)^{-1}\int \gamma D(F)(m)d\frac{\mu_1+\mu_2}{2}
\\
=
\int_{L_1}D(F)(m)d\frac{\mu_1+\mu_2}{2}
-
\int_{L_2}D(F)(m)d\frac{\mu_1+\mu_2}{2}
\end{multline}
for every $F\in \wbt$. 

Next define a regular Borel probability measure $\nu_j$ on $X_2\times Y_2\times \mathfrak{M}_2\times \mathbb{T}$ for $j=\theta,1,2$ by
\[
\nu_j(E)=\mu_j(\{(x,y,m,e^{i\theta}\gamma):(x,y,m,\gamma)\in E\})
\]
for a Borel set $E\subset X_2\times Y_2\times\mathfrak{M}_2\times\mathbb{T}$. Then we have 
\begin{equation}\label{A1}
\int F(x,y)d\nu_j=\int F(x,y) d\mu_j
\end{equation}
for every $F\in \wbt$ and $j=\theta, 1,2$. By \eqref{sppmj}
 we have
\begin{equation}\label{sppnj}
\operatorname{supp}(\nu_j)\subset b^{-1}_0(1)\times f^{-1}_0(1)\times \left[(K_1\times \{\gamma_c\})\cup (K_2\times \{-\gamma_c\})\right]
\end{equation}
for $j=\theta,1,2$. Put $T_j=b^{-1}_0(1)\times f^{-1}_0(1)\times K_j\times\{(-1)^{j+1}\gamma_c\}$. As $\nu_\theta$ and $\frac{\nu_1+\nu_2}{2}$ are regular and $K_1\cap K_2=\emptyset$, we have  by \eqref{sppmj} and \eqref{sppnj} that 
\begin{equation}\label{A2}
\begin{split}
\int\gamma D(F)(m)d\nu_j 
& = \int_{T_1}\gamma D(F)(m)d\nu_j+\int_{T_2}\gamma D(F)(m)d\nu_j \\
& = \gamma_c\int_{T_1}D(F)(m)d\nu_j-\gamma_c\int_{T_2}D(F)(m)d\nu_j \\
& = \gamma_c\int_{L_1}D(F)(m)d\mu_j-\gamma_c\int_{L_2}D(F)(m)d\mu_j \\
& = e^{-i\theta}\int \gamma D(F)(m)d\mu_j
\end{split}
\end{equation}
for every $F\in \wbt$ and $j=\theta, 1,2$. 
For $j=\theta, 1,2$, put $\psi_j:I(\wbt)\to \mathbb{C}$ 
by 
\[
\psi_j(I(F))=\int I(F)d\nu_j, \quad I(F)\in I(\wbt).
\]
As $\nu_j$ is a probability measure we see that $\psi_j\in I(\wbt)^*_1$. Let $I(F)\in I(\wbt)$. Then by \eqref{A1} and \eqref{A2} we have
\begin{equation*}
\begin{split}
\psi_\theta (I(F)) 
& = \int I(F)d\nu_\theta \\
& = \int F(x,y)d\nu_\theta +\int \gamma D(F)(m)d\nu_\theta \\
& = \int F(x,y)d\mu_\theta +e^{-i\theta}\int \gamma D(F)(m)d\mu_\theta.
\end{split}
\end{equation*}
Then by \eqref{star1} and \eqref{star2} we have 
\[
\psi_\theta(I(F))=F(x_c,y_c)+\gamma_cD(F)(m_c)=I(F)(x_c,y_c,m_c,\gamma_c).
\]
That is $\psi_\theta$ is the point evaluation for $I(\wbt)$ at $(x_c,y_c,m_c,\gamma_c)$. 
By \eqref{A1}, \eqref{A2}, \eqref{star3} and \eqref{star4} we have
\begin{multline*}
\frac12(\psi_1(I(F))+\psi_2(I(F))) \\
= \int F(x,y)d\frac{\nu_1+\nu_2}{2}+\int \gamma D(F)(m)d\frac{\nu_1+\nu_2}{2} \\
= \int F(x,y)d\frac{\mu_1+\mu_2}{2}+e^{-i\theta}\int \gamma D(F)(m)d\frac{\mu_1+\mu_2}{2} \\
= F(x_c,y_c)+\gamma_c D(F)(m_c)
\end{multline*}
for every $F\in \wbt$. Hence we have 
\[
\psi_\theta(I(F))=\frac12\left(\psi_1(I(F))+\psi_2(I(F))\right)
\]
for every $I(F)\in I(\wbt)$; $\psi_\theta=\frac12(\psi_1+\psi_2)$. Since $(x_c,y_c,m_c,\gamma_c)$ is in the Choquet boundary for $I(\wbt)$, $\psi_\theta$ is an extreme point for $I(\wbt)^*_1$. Thus we have that $\psi_\theta=\psi_1=\psi_2$. 

Applying the equations $\psi_\theta=\psi_1=\psi_2$ we prove that $\phi_\theta=\phi_1=\phi_2$. By \eqref{A1} and \eqref{A2} we have
\begin{multline}\label{K}
\phi_j(I(F))=\int F(x,y)d\mu_j+\int \gamma D(F)(m)d\mu_j \\
=\int F(x,y)d\nu_j+ e^{i\theta}\int \gamma D(F)(m)d\nu_j,\quad F\in I(\wbt)
\end{multline}
for every $j=\theta,1,2$. 
Put
\[
P=\{G\in \wbt: 0\le G\le 1=G(x_c,y_c)\}.
\]
Then the set $P$ separates the points of $X_2\times Y_2$. Suppose that $(x_1,y_1)$ and $(x_2,y_2)$ are different points in $X_2\times Y_2$. We may assume that $(x_c,y_c)\ne(x_2,y_2)$. Let $\mathfrak{U}_c$ be an open neighbourhood of $(x_c,y_c)$ such that $(x_2,y_2)\not\in \mathfrak{U}_c$. By Lemma \ref{1} there is $F_c\in \wbt$ such that $0\le F_c\le 1=F_c(x_c,y_c)$ on $X_2\times Y_2$ and $F_c<1/2$ on $X_2\times Y_2\setminus \mathfrak{U}_c$. Hence $0\le F_c(x_2,y_2)<1/2$. In the same way there exists $F_1\in \wbt$ such that $0\le F_1\le 1=F_1(x_1,y_1)$ on $X_2\times Y_2$ and $0\le F_1(x_2,y_2)<1/2$. Put $H=1-(1-F_c)(1-F_1)\in \wbt$. Then we infer that $0\le H\le 1$ on $X_2\times Y_2$, $H(x_c,y_c)=H(x_1,y_1)=1$, and $H(x_2,y_2)\ne 1$. Hence we have that $H\in P$ and $H(x_1,y_1)\ne H(x_2,y_2)$.
Let $G\in P$ be arbitrary. Since $P\subset \wbt$, we have $G\in \wbt$. Hence by the equality \eqref{A1} we have
\begin{multline*}
\frac12\left(\int G(x,y)d\nu_1+\int G(x,y)d\nu_2\right) \\
=\frac12\left(\int G(x,y)d\mu_1+\int G(x,y)d\mu_2\right)
=\int G(x,y)d\frac{\mu_1+\mu_2}{2}.
\end{multline*}
By \eqref{star3}
\[
\int G(x,y)d\frac{\mu_1+\mu_2}{2}=G(x_c,y_c)=1.
\]
Hence we have 
\[
\frac12\left(\int G(x,y)d\nu_1+\int G(x,y)d\nu_2\right)=1.
\]
Since $0\le G\le 1$ we have $0\le \int G(x,y)d\nu_j\le 1$ for $j=1,2$. It follows that 
\[
\int G(x,y)d\nu_1=\int G(x,y)d\nu_2 = 1.
\]
As $G\in P$ is arbitrary we have 
\[
\int \sum a_nG_n(x,y)d\nu_1=\sum a_n= \int \sum a_nG_n(x,y)d\nu_2
\]
for any complex linear combination $\sum a_nG_n$ for 
$G_n\in P$. 
Since $P$ is closed under multiplication and separates the points in $X_2\times Y_2$, we have that
\[
\left\{\sum a_nG_n:\text{$a_n\in {\mathbb C}$, $G_n\in P$}\right\}
\] 
is a unital subalgebra of $\wbj$ which is conjugate-closed and separates the points of $X_2\times Y_2$. The Stone-Weierstrass theorem asserts that it is uniformly dense in $C(X_2\times Y_2)$, hence so is in $\wbt$. It follows that we have
\begin{equation}\label{C}
\int F(x,y)d\nu_1=\int F(x,y)d\nu_2
\end{equation}
for every $F\in \wbt$. On the other hand, since $\psi_1=\psi_2$ we have
\begin{multline}\label{nnn}
\int F(x,y)d\nu_1+\int \gamma D(F)(m)d\nu_1=\psi_1(I(F))\\
=\psi_2(I(F))=
\int F(x,y)d\nu_2+\int \gamma D(F)(m)d\nu_2
\end{multline}
for every $F\in \wbt$. By \eqref{C} and \eqref{nnn} we have
\[
\int \gamma D(F)(m)d\nu_1=\int \gamma D(F)(m) d\nu_2
\]
for every $F\in \wbt$. It follows by \eqref{K} that $\phi_1(I(F))=\phi_2(I(F))$ for every $F\in \wbt$. We infer that $\phi_\theta=\phi_1=\phi_2$. We conclude that $\phi_\theta$ is an extreme point for any $0<\theta<\pi/2$, that is, $(x_c,y_c,m_c,e^{i\theta}\gamma_c)$ is in the Choquet boundary for $I(\wbt)$ for  any $0<\theta<\pi/2$.
\end{proof}
\begin{proof}[Proof of Proposition \ref{absolute value 1}]
Define a map $\tilde{U}:I_1(\wbo)\to I_2(\wbt)$ by $\tilde{U}(I_1(H))=I_2(U(H))$ for $I_1(H)\in I_1(\wbo)$. The map $\tilde{U}$ is well defined since $I_1$ is injective. Due to the definition of $I_j$, we see that $\tilde{U}$ is a surjective isometry.
Then the dual map $\tilde{U}^*:I_2(\wbt)^*\to I_1(\wbo)^*$ is an isometry and it preserves the extreme points of the closed unit ball $I_2(\wbt)^*_1$ of $I_2(\wbt)^*$. 
Let $(x_0,y_0)$ be an arbitrary point in $X_2\times Y_2$ and ${\mathfrak U}$ an arbitrary open neighborhood of $(x_0,y_0)$. Then by Lemmata \ref{1} and \ref{2} there exists $(x_c,y_c,m_c,\gamma_c)\in {\mathfrak U}\times\mathfrak{M}_2\times \mathbb{T}$ such that $(x_c,y_c,m_c,e^{i\theta}\gamma_c)$ is in the Choquet boundary of $I(\wbt)$ for every $0\le \theta<\pi/2$. Let $\phi_\theta$ be the point evaluation on $I(\wbt)$ at $(x_c,y_c,m_c,e^{i\theta}\gamma_c)$. Then $\phi_\theta$ is an extreme point of the closed unit ball $I(\wbt)^*_1$. As $\tilde{U}^*$ preserves the extreme point of the closed unit ball, 
$\tilde{U}^*(\phi_{\theta})$ is an extreme points of the closed unit ball $I_1(\wbo)^*_1$ of $I_1(\wbo)^*$. By the Arens-Kelly theorem we see that there exists a complex number $\gamma$ with absolute value 1 and a point $d$ in the Choquet boundary for $I_1(\wbo)$ such that $\tilde{U}^*(\phi_{\theta})=\gamma \phi_d$, where $\phi_d$ denotes the point evaluation for $I_1(\wbo)$ at $d$. Thus we have that
\[
|\tilde{U}^*(\phi_{\theta})(1)|=1.
\]
As $\tilde{U}^*(\phi_\theta)(1)=\phi_\theta(I_2(U(1)))$ we have
\[
1=|U(1)(x_c,y_c)+e^{i\theta}\gamma_cD(U(1))(m_c)|
\]
for every $0\le \theta <\pi/2$. Hence one of the following (i) or (ii) occurs:
\begin{itemize}
\item[(i)]
$U(1)(x_c,y_c)=0$ and $|D(U(1))(m_c)|=1$,
\item[(ii)]
$|U(1)(x_c,y_c)|=1$ and $D(U(1))(m_c)=0$.
\end{itemize}
But (i) never occurs. The reason is as follows. Since $U$ is an isometry we have 
\begin{equation}\label{E}
1=\|1\|=\|U(1)\|=\|U(1)\|_\infty +\|D(U(1))\|_\infty.
\end{equation}
Suppose that (i) holds. 
By the second equation of (i) we have $\|D(U(1))\|_\infty\ge 1$. Then by \eqref{E} we have $\|U(1)\|_{\infty}=0$, and $U(1)=0$, which contradicts \eqref{E}. Thus we conclude that only (ii) occurs. 

By the first equation of (ii) we infer that $\|U(1)\|_\infty\ge 1$. Then by the equation \eqref{E}, we have $0=\|D(U(1))\|_\infty$. By the condition $\cnt (2)$  of Definition \ref{aqL} we have $U(1)\in 1\otimes C(Y_2)$; there exists $h\in C(Y_2)$ with $U(1)=1\otimes h$. As $|U(1)(x_c,y_c)|=1$ we have $|h(y_c)|=1$. Note that $h$ does not depend on the point $(x_0,y_0)$ nor a neighborhood $\mathfrak{U}$. As $\mathfrak{U}$ is an arbitrary neighborhood of $(x_0,y_0)$, and $(x_c,y_c)\in \mathfrak{U}$, the continuity of $h$ asserts that $|h(y_0)|=1$. Since $y_0$ is an arbitrary point in $Y_2$, we infer that $|h|=1$ on $Y_2$.
\end{proof}
\section{Proof of Theorem \ref{main}}
\begin{proof}[Proof of Theorem \ref{main}]
Suppose first $X_1=\{x_1\}$ and $X_2=\{x_2\}$ are singletons. In this case $B_j$ is isometrically isomorphic to $\mathbb{C}$ as a Banach algebra and $\wbj=1\otimes C(Y_j)$. Thus $\|D(F)\|_\infty=0$ for every $F\in \wbj$. Therefore $\wbj$ is isometrically isomorphic to $C(Y_j)$ for $j=1,2$. Thus we may suppose that $U$ is a surjective isometry from $C(Y_1)$ onto $C(Y_2)$. Then applying the Banach--Stone theorem, we see that $|U(1)|=1$ on $Y_2$ and there exists a homeomorphism $\tau:Y_2\to Y_1$ such that
\[
U(F)=U(1)F\circ \tau, \qquad F\in C(Y_1).
\]
Letting $U(1)=1\otimes h$ and $\varphi:X_2\times Y_2 \to X_1$ by $\varphi(x_2,y)=x_1$ for every $y\in Y_2$, we have 
\[
U(F)(x,y)=h(y)F(\varphi(x,y),\tau(y)),\qquad (x,y)\in X_2\times Y_2
\]
for every $F\in \wbo$.

Suppose that $X_2$ is not a singleton. We prove the conclusion applying Proposition \ref{absolute value 1}. By Proposition \ref{absolute value 1} there exists $h\in C(Y_2)$ with $|h|=1$ on $Y_2$ such that $U(1)=1\otimes h$. Define $U_0:\wbo\to \wbt$ by $U_0(F)=1\otimes \bar{h}U(F)$ for $F\in \wbo$, where $\bar{h}$ denotes the complex conjugate of $h$. It is easy to see that $U_0$ is a bijection with $U_0(1)=1$. By the condition $\cnt (3)$  of Definition \ref{aqL} it is also easy to check that $U_0$ is an isometry. As $\wbj$ is a unital Banach algebra which is contained in $C(X_j\times Y_j)$ which separates the points of $X_j\times Y_j$. As $\wbj$ is natural, by \cite[Proposition 2]{ja} it is a regular subspace of $C(X_j\times Y_j)$ in the sense of Jarosz \cite[p. 67]{ja}. As the norm $\|\cdot\|=\|\cdot\|_\infty+\nn\cdot\nn$ is a $p$-norm (see \cite[p. 67]{ja}) and $U_0(1)=1$, we infer by Theorem in \cite{ja} that $U_0$ is also an isometry with respect to the supremum norm $\|\cdot\|_{\infty}$ on $X_j\times Y_j$. As $\wbj$ is a self-adjoint unital subalgebra of $C(X_j\times Y_j)$ which separates the points of $X_j\times Y_j$, the Stone-Weierstrass theorem asserts that $\wbj$ is uniformly dense in $C(X_j\times Y_j)$. Then the Banach--Stone theorem asserts that $U_0$ is an algebra isomorphism. Since $U_0$ is an isometry with respect to the original norm $\|\cdot\|$ on $\wbj$ we have for every $1\otimes g\in 1\otimes C(Y_1)$ that
\begin{multline*}
\|1\otimes g\|_{\infty}+\|D(1\otimes g)\|_\infty=\|1\otimes g\|=\|U_0(1\otimes g)\| \\
=\|U_0(1\otimes f)\|_\infty+\|D(U_0(1\otimes g))\|_\infty.
\end{multline*}
By the condition $\cnt (2)$ of Definition \ref{aqL} we have $\|D(1\otimes g)\|_\infty=0$. Since $U_0$ is also an isometry with respect to the supremum norm we have $\|1\otimes g\|_\infty=\|U_0(1\otimes g)\|_\infty$. Therefore we have that $\|D(U_0(1\otimes g))\|_\infty=0$. By the condition $\cnt (2)$ of Definition \ref{aqL}  we have that $U_0(1\otimes g)\in 1\otimes C(Y_2)$. Hence we see that $U_0(1\otimes C(Y_1))\subset 1\otimes C(Y_2)$. By the Stone-Weierstrass theorem $B_1\otimes C(Y_1)$ is uniformly dense in $C(X_1\times Y_1)$, hence $\wbo\subset \overline{B_1\otimes C(Y_1)}$, where $\overline{\cdot}$ denotes the uniform closure on $X_1\times Y_1$. Then by Proposition 3.2 and the comments which follow that proposition in \cite{hots} there exists continuous maps $\varphi:X_2\times Y_2\to X_1$ and  $\tau:Y_2\to Y_1$ such that 
\begin{equation}\label{abc}
U_0(F)(x,y)=F(\varphi(x,y),\tau(y)),\qquad (x,y)\in X_2\times Y_2
\end{equation}
for every $F\in \wbo$. 
As $X_2$ is not a singleton, there are two distinct points $z,w\in X_2$. Let $y\in Y_2$ be any point. As $U_0$ is a surjecton and $\wbt$ separates the points of $X_2\times Y_2$, there exists a function $F\in \wbo$ such that $U_0(F)(z,y)\ne U_0(F)(w,y)$. Then by \eqref{abc} we have
\[
F(\varphi(z,y),\tau(y))=U_0(F)(z,y)\ne U_0(F)(w,y)=F(\varphi(w,y),\tau(y)).
\]
Hence $\varphi(z,y)\ne \varphi(w,y)$. As $\varphi(z,y),\varphi(w,y)\in X_1$, we have that $X_1$ is not a singleton. 
Applying a similar argument for $U_0^{-1}$ instead of $U_0$ we observe that there exists continuous maps $\varphi_1:X_1\times Y_1\to X_2$ and $\tau_1:Y_1\to Y_2$ such that 
\[
U_0^{-1}(G)(u,v)=G(\varphi_1(u,v), \tau_1(v)),\qquad (u,v)\in X_1\times Y_1
\]
for every $G\in \wbt$. Thus we have 
\begin{multline}\label{[1]}
G(x,y)=U_0(U_0^{-1}(G))(x,y)=U_0^{-1}(G)(\varphi(x,y),\tau(y)) \\
=G(\varphi_1(\varphi(x,y),\tau(y)), \tau_1(\tau(y))), \quad (x,y)\in X_2\times Y_2
\end{multline}
for every $G\in \wbt$ and 
\begin{multline}\label{[2]}
F(u,v)=U_0^{-1}(U_0(F))(u,v)=U_0(F)(\varphi_1(u,v),\tau_1(v)) \\
=F(\varphi(\varphi_1(u,v),\tau_1(v)),\tau(\tau_1(v))), \quad (u,v)\in X_1\times Y_1
\end{multline}
for every $F\in \wbo$. As $\wbo$ separates the points in $X_1\times Y_1$ and $\wbt$ separates the points in $X_2\times Y_2$, we infer that $y=\tau_1(\tau(y))$ for every $y\in Y_2$ and $v=\tau(\tau_1(v))$ for every $v\in Y_1$. Hence $\tau:Y_2\to Y_1$ and $\tau_1:Y_1\to Y_2$ are homeomorphisms and $\tau_1^{-1}=\tau$. We have by \eqref{[2]} that $u=\varphi(\varphi_1(u,v),\tau_1(v))$ for every $(u,v)\in X_1\times Y_1$. As $\tau_1$ is a homeomorphism, we infer that $u=\varphi(\varphi_1(u,\tau_1^{-1}(y)),y)$ holds for every pair $u\in X_1$ and $y\in Y_2$. It means that for every $y\in Y_2$ the map $\varphi(\cdot,y):X_2\to X_1$ is a surjection. 

We prove that $\varphi(\cdot,y)$ is an injection for every $y\in Y_2$. Let $y\in Y_2$. Suppose that $\varphi(a,y)=\varphi(b,y)$ for $a,b\in X_2$. Then $\varphi_1(\varphi(a,y),\tau(y))=a$ and $\varphi_1(\varphi(b,y),\tau(y))=b$ by the equation \eqref{[1]}. Thus we have $a=b$. Hence we conclude that $\varphi(\cdot,y)$ is an injection. It follows that $\varphi(\cdot,y):X_2\to X_1$ is a bijective continuous map. As $X_2$ is compact and $X_1$ is Hausdorff, we at once see that $\varphi(\cdot,y)$ is a homeomorphism. As $U_0(F)=1\otimes\bar{h} U(F)$ for every $F\in \wbo$ we conclude that 
\[
U(F)(x,y)=h(y)F(\varphi(x,y),\tau(y)),\qquad (x,y)\in X_2\times Y_2.
\]

Suppose that $X_1$ is not a singleton. By a similar argument for $U^{-1}$ instead of $U$ we see that there exists a continuous map $\varphi_1:X_1\times Y_1\to X_2$ such that $\varphi_1(\cdot,y):X_1\to X_2$ is a homeomorphism. As $X_1$ is not a singleton we infer that $X_2$ is not a singleton. Then the conclusion follows from the proof for the case where $X_2$ is not a singleton.
\end{proof}
\section{Examples of admissible quadruples of type L with applications of Theorem \ref{main}}\label{example}
\begin{example}\label{lip}
Let $(X,d)$ be a compact metric space and $Y$ a compact Hausdorff space. Let $0<\alpha\le 1$. Suppose that $B$ is a closed subalgebra of $\Lip((X,d^\alpha))$ which contains the constants and separates the points of $X$, where $d^\alpha$ is the H\"older metric induced by $d$. Suppose that $\widetilde{B}$ is a closed subalgebra of $\Lip((X,d^\alpha),C(Y))$ which contains the constants and separates the points of $X\times Y$. Suppose that $B$ and $\wb$ are self-adjoint. Suppose that 
\[
B\otimes C(Y)\subset \widetilde{B}
\]
 and 
\[
\{F(\cdot, y):F\in \widetilde{B}, y\in Y\}\subset B.
\] 
Let $\mathfrak{M}$ be the Stone-\v Cech compactification of $\{(x,x')\in X^2:x\ne x'\}\times Y$. For $F\in \wb$, let $D(F)$ be the continuous extension to $\mathfrak{M}$ of the function $(F(x,y)-F(x',y))/d^\alpha(x,x')$ on $\{(x,x')\in X^2:x\ne x'\}\times Y$. Then $D:\wb\to C(\mathfrak{M})$ is well defined. 
We have $\|D(F)\|_\infty=L_\alpha(F)$ for every $F\in \wb$.
It is easy to see that the condition $\cnt$ of Definition \ref{aqL} is satisfied. Hence we have that 
 $(X,C(Y),B,\widetilde{B})$ is an admissible quadruple of type L. 

There are two typical example of $(X,C(Y),B,\widetilde{B})$ above. One is
\[
(X,C(Y), \Lip((X,d^\alpha)),\Lip((X,d^\alpha),C(Y)))
\]
By Corollary \ref{eell} $\Lip((X,d^\alpha))$ and $\Lip((X,d^\alpha),C(Y))$ are self-adjoint. The inclusions 
\[
\Lip((X,d^\alpha))\otimes C(Y)\subset \Lip((X,d^\alpha),C(Y))
\]
and 
\[
\{F(\cdot,y):F\in \Lip((X,d^\alpha),C(Y)), y\in Y\}\subset \Lip((X,d^\alpha))
\]
 is obvious. The other example of $(X,C(Y),B,\widetilde{B})$ above is
\[
(X, C(Y), \lip(X), \lip(X,C(Y)))
\]
for $0<\alpha <1$. 
 In fact $\lip(X)$ (resp. $\lip(X,C(Y))$) is a closed subalgebra of $\Lip((X,d^\alpha))$ (resp. $\Lip((X,d^\alpha),C(Y))$ which contains the constants. In this case   Corollary \ref{eell} asserts that $\lip(X)$ separates the points of $X$. As $\lip(X)\otimes C(Y)\subset \lip(X,C(Y))$ we see that $\wb=\lip(X,C(Y))$ separates the points of $X\times Y$. By Corollary \ref{eell}  $\lip(X)$ and $\lip(X,C(Y))$ are self-adjoint. The inclusions 
\[
\lip(X)\otimes C(Y)\subset \lip(X,C(Y))
\]
and 
\[
\{F(\cdot,y):F\in \lip(X,C(Y)), y\in Y\}\subset \lip(X)
\]
is obvious.
\end{example}
\begin{cor}\label{g}
Let $j=1,2$.
Let $(X_j,d_j)$ be a compact metric space 
and $Y_j$ a compact Hausdorff space. Let $\alpha$ be $0<\alpha\le 1$. Suppose that $B_j$  is a closed subalgebra of $\Lip((X_j,d_j^\alpha))$ which contains the constants and separates the points of $X_j$. Suppose that $\widetilde{B_j}$ is a closed subalgebra of $\Lip((X_j,d_j^\alpha),C(Y_j))$ which contains the constants and separates the points of $X_j\times Y_j$. 
Suppose that $B_j$ and $\wb_j$ are self-adjoint. Suppose that 
\[
B_j\otimes C(Y_j)\subset \widetilde{B_j}
\]
 and 
\[
\{F(\cdot, y):F\in \widetilde{B_j}, y\in Y_j\}\subset B_j.
\] 
Suppose that 
\[
U:\widetilde{B_1}\to \wbt
\]
is a surjective isometry. Then there exists $h\in C(Y_2)$ such that $|h|=1$ on $Y_2$, a continuous map $\varphi:X_2\times Y_2\to X_1$ such that $\varphi(\cdot,y):X_2\to X_1$ is a homeomorphism for each $y\in Y_2$, and a homeomorphism $\tau:Y_2\to Y_1$ which satisfy
\[
U(F)(x,y)=h(y)F(\varphi(x,y),\tau(y)),\qquad (x,y)\in X_2\times Y_2
\]
for every $F\in \wbo$.
\end{cor}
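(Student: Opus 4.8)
The plan is to deduce Corollary \ref{g} directly from Theorem \ref{main}; essentially all the work lies in checking that, under the stated hypotheses, each quadruple $(X_j, C(Y_j), B_j, \widetilde{B_j})$ is an admissible quadruple of type L, which is exactly the content of Example \ref{lip}. So the first step is to fix $j\in\{1,2\}$ and produce the data required by Definition \ref{aqL}: take $\mathfrak{M}_j$ to be the Stone--\v Cech compactification of $\{(x,x')\in X_j^2 : x\ne x'\}\times Y_j$, and for $F\in\widetilde{B_j}$ let $D_j(F)$ be the continuous extension to $\mathfrak{M}_j$ of the function $(x,x',y)\mapsto (F(x,y)-F(x',y))/d_j^\alpha(x,x')$ on $\{(x,x')\in X_j^2 : x\ne x'\}\times Y_j$. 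This function is bounded precisely because $F\in\Lip((X_j,d_j^\alpha),C(Y_j))$, hence continuous and bounded, so it extends to $\mathfrak{M}_j$; $D_j$ is complex-linear, and it sends $\widetilde{B_j}\cap C_{\mathbb R}(X_j\times Y_j)$ into $C_{\mathbb R}(\mathfrak{M}_j)$ since the quotient is real whenever $F$ is.

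Second, I would verify $\cno$ and $\cnt$. Condition $\cno$ is assumed outright. For $\cnt$: by construction $\|D_j(F)\|_{\infty(\mathfrak{M}_j)}=L_\alpha(F)$, so the norm on $\widetilde{B_j}$ is $\|F\|_{\infty(X_j\times Y_j)}+\|D_j(F)\|_{\infty(\mathfrak{M}_j)}$, which is $\cnt(1)$. For $\cnt(2)$, if $D_j(F)=0$ then $L_\alpha(F)=0$, so $F(\cdot,y)$ is independent of the first variable; putting $g(y)=F(x_0,y)$ for a fixed $x_0$ gives $F=1_{B_j}\otimes g$ with $g\in C(Y_j)$, and conversely every element of $1_{B_j}\otimes C(Y_j)\subset B_j\otimes C(Y_j)\subset\widetilde{B_j}$ is annihilated by $D_j$, so $\ker D_j=1_{B_j}\otimes C(Y_j)$. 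For $\cnt(3)$, if $g\in C(Y_j)$ with $|g|=1$ on $Y_j$ then $1_{B_j}\otimes g\in B_j\otimes C(Y_j)\subset\widetilde{B_j}$, hence $(1_{B_j}\otimes g)F\in\widetilde{B_j}$, and on the dense set $\{x\ne x'\}\times Y_j$ the difference quotient defining $D_j((1_{B_j}\otimes g)F)$ is $g(y)$ times that for $F$; since $|g(y)|=1$, passing to the continuous extension gives $|D_j((1_{B_j}\otimes g)F)|=|D_j(F)|$ pointwise on $\mathfrak{M}_j$, so the supremum norms agree. The two remaining structural requirements $B_j\otimes C(Y_j)\subset\widetilde{B_j}$ and $\{F(\cdot,y):F\in\widetilde{B_j},\,y\in Y_j\}\subset B_j$ are part of the hypotheses.

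Third, with $(X_j,C(Y_j),B_j,\widetilde{B_j})$ now known to be an admissible quadruple of type L for $j=1,2$, I would apply Theorem \ref{main} to the surjective isometry $U:\widetilde{B_1}\to\widetilde{B_2}$ and read off the conclusion: the existence of $h\in C(Y_2)$ with $|h|=1$, a continuous $\varphi:X_2\times Y_2\to X_1$ with each $\varphi(\cdot,y)$ a homeomorphism, and a homeomorphism $\tau:Y_2\to Y_1$ with $U(F)(x,y)=h(y)F(\varphi(x,y),\tau(y))$. I expect no real obstacle: the only mildly delicate points are the well-definedness and continuity of $D_j$ through the Stone--\v Cech compactification and the verification that multiplication by a unimodular $1_{B_j}\otimes g$ remains inside $\widetilde{B_j}$ and interacts with $D_j$ as stated; the Stone--Weierstrass density of $\widetilde{B_j}$ in $C(X_j\times Y_j)$ that is used inside the proof of Theorem \ref{main} is automatic from self-adjointness, point separation, and containment of the constants.
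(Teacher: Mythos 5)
Your proposal is correct and follows the paper's own route: the paper proves Corollary \ref{g} exactly by observing (as in Example \ref{lip}, with the Stone--\v Cech compactification of $\{(x,x')\in X_j^2:x\ne x'\}\times Y_j$ and the difference-quotient operator $D_j$) that each $(X_j,C(Y_j),B_j,\widetilde{B_j})$ is an admissible quadruple of type L, and then applying Theorem \ref{main}. Your verification of conditions $\cno$ and $\cnt(1)$--$(3)$ just spells out the details the paper leaves as ``easy to see,'' so there is no substantive difference.
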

\begin{proof}
As in a similar way to the argument in Example \ref{lip} we see that $(X_j,C(Y_j), B_j, \widetilde{B_j})$ is an admissible quadruple of type L. Then applying Theorem \ref{main} the conclusion holds.
\end{proof}
Note that Corollary \ref{g} holds for $\wbj=\Lip(X_j,C(Y_j))$ and $\wbj=\lip(X_j,C(Y_j))$ for $0<\alpha<1$. 
In this case we have a complete description of a surjective isometry for $\wbj=\Lip(X_j,C(Y_j))$ and $\wbj=\lip(X_j,C(Y_j))$ for $0<\alpha<1$. Note that $\Lip_\alpha((X_j,d_j),C(Y_j))$ for $0<\alpha< 1$ is isometrically isomorphic to $\Lip((X_j,d_j^\alpha),C(Y_j))$ by considering  the  H\"older metric $d_j(\cdot,\cdot)^\alpha$ for the original metric $d_j(\cdot,\cdot)$ on $X_j$.
\begin{cor}\label{isoLip}
Let $(X_j,d_j)$ be a compact metric space and $Y_j$ a compact Hausdorff space for $j=1,2$.
Suppose that 
$U:\Lip(X_1,C(Y_1))\to \Lip(X_2,C(Y_2))$ {\rm (}resp. $U:\lip(X_1,C(Y_1))\to \lip(X_2,C(Y_2))${\rm )}
 is a map. Then $U$ is a surjective isometry with respect to the sum norm $\|\cdot\|=\|\cdot\|_\infty+L(\cdot)$ {\rm (}resp. $\|\cdot\|=\|\cdot\|_\infty+L_\alpha(\cdot)${\rm )} if and only if there exists $h\in C(Y_2)$ with $|h|=1$ on $Y_2$, a continuous map $\varphi:X_2\times Y_2\to X_1$ such that $\varphi(\cdot,y):X_2\to X_1$ is a surjective isometry for every $y\in Y_2$, and a homeomorphism $\tau:Y_2\to Y_1$ which satisfy that
\[
U(F)(x,y)=h(y)F(\varphi(x,y),\tau(y)),\qquad (x,y)\in X_2\times Y_2
\]
for every $F\in \Lip(X_1,C(Y_1))$ {\rm (}resp. $F\in \lip(X_1,C(Y_1))${\rm )}.
\end{cor}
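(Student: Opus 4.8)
The plan is to read the statement off Theorem~\ref{main} (through Corollary~\ref{g}) and then add one extra, purely metric, step; the \emph{sufficiency} will be a direct check. Given $h,\varphi,\tau$ as in the statement, put $\Phi(x,y)=(\varphi(x,y),\tau(y))$. Since $\tau$ maps $Y_2$ onto $Y_1$ and each $\varphi(\cdot,y)$ maps $X_2$ onto $X_1$, the map $\Phi$ is onto $X_1\times Y_1$, so $\|U(F)\|_\infty=\|F\circ\Phi\|_\infty=\|F\|_\infty$ because $|h|\equiv1$. Using that $\varphi(\cdot,y)$ is an isometry, $\|U(F)(x,\cdot)-U(F)(x',\cdot)\|_{C(Y_2)}=\sup_y|F(\varphi(x,y),\tau(y))-F(\varphi(x',y),\tau(y))|\le L_\alpha(F)\,d_2(x,x')^\alpha$, whence $L_\alpha(U(F))\le L_\alpha(F)$ (and likewise $L(U(F))\le L(F)$ in the Lipschitz case); as $U^{-1}$ is of the same form, the reverse inequality follows, so $\|U(F)\|=\|F\|$ and $U$ is a surjective isometry for the sum norm. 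In the little-Lipschitz case one checks moreover that $U$ carries $\lip(X_1,C(Y_1))$ onto $\lip(X_2,C(Y_2))$, again because $\varphi(\cdot,y)$ is a fibrewise isometry and so transports the little-H\"older condition.

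For \emph{necessity}, by Example~\ref{lip} (equivalently Corollary~\ref{g}) the quadruples $(X_j,C(Y_j),\Lip(X_j),\Lip(X_j,C(Y_j)))$, respectively $(X_j,C(Y_j),\lip(X_j),\lip(X_j,C(Y_j)))$ for $0<\alpha<1$, are admissible of type~L, so Theorem~\ref{main} already supplies $h\in C(Y_2)$ with $|h|=1$, a continuous $\varphi:X_2\times Y_2\to X_1$ with each $\varphi(\cdot,y)$ a homeomorphism onto $X_1$, and a homeomorphism $\tau:Y_2\to Y_1$ with $U(F)(x,y)=h(y)F(\varphi(x,y),\tau(y))$. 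The only thing left is to upgrade ``homeomorphism'' to ``surjective isometry'' for $\varphi(\cdot,y)$. I would set $U_0=(1\otimes\bar h)\,U$; as in the proof of Theorem~\ref{main} it is the composition operator $F\mapsto F\circ\Phi$, $\Phi(x,y)=(\varphi(x,y),\tau(y))$, and it is an isometry both for $\|\cdot\|_\infty+\nn\cdot\nn$ and for $\|\cdot\|_\infty$; subtracting these gives $\nn U_0(F)\nn=\nn F\nn$, i.e.\ $L_\alpha(F\circ\Phi)=L_\alpha(F)$ for every $F\in\wbo$ (with $L_1=L$). The same holds for $U_0^{-1}$, which is composition with $\Phi_1(u,v)=(\varphi_1(u,v),\tau^{-1}(v))$, $\varphi_1(u,\tau(y))=\varphi(\cdot,y)^{-1}(u)$.

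Now fix $y_0\in Y_2$ and distinct $x_1,x_2\in X_2$, and put $u_i=\varphi(x_i,y_0)\in X_1$ (so $u_1\ne u_2$). Given $\varepsilon>0$, I would choose $f\in B_1$ with $L_\alpha(f)\le1$ and $|f(u_1)-f(u_2)|\ge(1-\varepsilon)\,d_1(u_1,u_2)^\alpha$, and apply $U_0$ to $F=f\otimes 1_{C(Y_1)}\in\wbo$:
\[
1\ge L_\alpha(f)=L_\alpha(F\circ\Phi)\ge\frac{|f(u_1)-f(u_2)|}{d_2(x_1,x_2)^\alpha}\ge(1-\varepsilon)\,\frac{d_1(u_1,u_2)^\alpha}{d_2(x_1,x_2)^\alpha}.
\]
Letting $\varepsilon\to0$ yields $d_1(\varphi(x_1,y_0),\varphi(x_2,y_0))\le d_2(x_1,x_2)$; the symmetric argument with $U_0^{-1}$ and a test function $g\otimes 1_{C(Y_2)}$, $g\in B_2$, gives the reverse inequality. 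Hence $\varphi(\cdot,y_0)$ preserves distances, and being onto (Corollary~\ref{g}) it is a surjective isometry; $y_0$ being arbitrary, this finishes the proof. The hard part is choosing $f$ when $B_1=\lip(X_1)$: the obvious candidate $d_1(\cdot,u_2)^\alpha$ is not admissible, since it fails to be little-$\alpha$-H\"older at $u_2$. I would remedy this by flattening it near $u_2$: take $f=\phi_\eta\circ d_1(\cdot,u_2)$ for small $\eta>0$, where $\phi_\eta(r)=r^\alpha-(1-\alpha)\eta^\alpha$ for $r\ge\eta$ and $\phi_\eta(r)=\alpha\eta^{\alpha-1}r$ for $0\le r\le\eta$; a short computation shows $\phi_\eta$ is nondecreasing, satisfies $|\phi_\eta(a)-\phi_\eta(b)|\le|a-b|^\alpha$ for all $a,b\ge0$, and is $O(r)=o(r^\alpha)$ as $r\to0$, so $f\in\lip(X_1)$ with $L_\alpha(f)\le1$, while $|f(u_1)-f(u_2)|=\phi_\eta(d_1(u_1,u_2))\ge d_1(u_1,u_2)^\alpha-(1-\alpha)\eta^\alpha\ge(1-\varepsilon)d_1(u_1,u_2)^\alpha$ once $\eta$ is small. (When $B_1=\Lip(X_1)$ one simply takes $f=d_1(\cdot,u_2)$, for which $L(f)=1$ and $|f(u_1)-f(u_2)|=d_1(u_1,u_2)$, so $\varepsilon=0$ suffices.) Everything else is bookkeeping on top of Theorem~\ref{main}.
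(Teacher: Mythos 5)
Your proposal is correct and follows essentially the same route as the paper: Corollary \ref{g} supplies the weighted-composition form, and the preservation of both the supremum norm and the sum norm (hence of the Lipschitz seminorm) applied to test functions $f\otimes 1$ built from the distance function, together with the symmetric argument for $U^{-1}$ and the identity $\varphi_1(\varphi(x,y),\tau(y))=x$, upgrades each $\varphi(\cdot,y)$ to a surjective isometry, while sufficiency is the same direct verification. The only divergence is the choice of test function in the little-Lipschitz case — the paper uses $d_1(\cdot,u_2)^\beta$ with $\beta\downarrow\alpha$, whereas you flatten $d_1(\cdot,u_2)^\alpha$ near $u_2$ and let $\eta\to 0$ — and both constructions work.
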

\begin{proof}
Suppose that there exists $h\in C(Y_2)$ with $|h|=1$ on $Y_2$, a continuous map $\varphi:X_2\times Y_2\to X_1$ such that $\varphi(\cdot,y):X_2\to X_1$ is a surjective isometry for every $y\in Y_2$, and a homeomorphism $\tau:Y_2\to Y_1$ which satisfy that
\[
U(F)(x,y)=h(y)F(\varphi(x,y),\tau(y)),\qquad (x,y)\in X_2\times Y_2
\]
for every $F\in \Lip(X_1,C(Y_1))$ {\rm (}resp. $F\in \lip(X_1,C(Y_1))${\rm )}. 
We prove that $U$ is a surjective isometry on $\Lip(X_j,C(Y_j))$. A proof for the case of $\lip(X_j,C(Y_j))$ is the same and we omit it. Since $\varphi(\cdot,y)$ is an isometry for every $y\in Y_2$, we have
\begin{equation}\label{pisoLip5}
\begin{split}
&\frac{|(U(F))(x,y)-(U(F))(x',y)|}{d_2(x,x')}
\\
& = \frac{|h(y)F(\varphi(x,y),\tau(y))-h(y)F(\varphi(x',y),\tau(y))|}{d_2(x,x')} 
\\
& = \frac{|F(\varphi(x,y),\tau(y))-F(\varphi(x',y),\tau(y))|}{d_2(\varphi(x,y),\varphi(x',y))},\quad x,x'\in X_2, y\in Y_2
\end{split}
\end{equation}
for $F\in \Lip(X_1,C(Y_1))$. Since $\varphi(\cdot,y)$ is bijective and the map $(x,y)\mapsto (\varphi(x,y),\tau(y))$ gives a bijection from $X_2\times Y_2$ onto $X_1\times Y_1$, we see by \eqref{pisoLip5} that $L(F)=L(U(F))$ for every $F\in \Lip(X_1,C(Y_1))$. Since $\|F\|_\infty=\|U(F)\|_\infty$, we conclude that
\[
\|F\|=\|F\|_\infty+L(F)=\|U(F)\|_\infty+L(U(F))=\|U(F)\|
\]
for every $F\in \Lip(X_1,C(Y_1))$; that is $U$ is an isometry. We prove that $U$ is surjective. Let $G\in \Lip(X_2,C(Y_2))$ be arbitrary. Put $F$ by 
$F(x,y)=\bar{h}(\tau^{-1}(y))G((\varphi(\cdot,\tau^{-1}(y)))^{-1}(x),\tau^{-1}(y))$ for $(x,y)\in X_1\times Y_1$, where $(\varphi(\cdot,\tau^{-1}(y)))^{-1}$ denotes the inverse of $\varphi(\cdot,\tau^{-1}(y)):X_2\to X_1$. Then we infer that $F\in \Lip(X_1,C(Y_1))$ and $U(F)=G$. As $G$ is an arbitrary elements in $\Lip(X_2,C(Y_2))$, we conclude that $U$ is surjective. It follows that $U$ is a surjective isometry.

Next we prove the converse. First consider the case of $\Lip(X_j,C(Y_j))$. Suppose that $U:\Lip(X_1,C(Y_1))\to \Lip(X_2,C(Y_2))$ is a surjective isometry. Then by Corollary \ref{g} there exists $h\in C(Y_2)$ with $|h|=1$ on $Y_2$, a continuous map $\varphi:X_2\times Y_2\to X_1$ such that $\varphi(\cdot,y):X_2\to X_1$ is a homeomorphism for every $y\in Y_2$, and a homeomorphism $\tau:Y_2\to Y_1$ which satisfy that
\begin{equation}\label{pisoLip1}
U(F)(x,y)=h(y)F(\varphi(x,y),\tau(y)),\qquad (x,y)\in X_2\times Y_2
\end{equation}
for every $F\in \Lip(X_1,C(Y_1))$. We only need to prove that $\varphi(\cdot,y):X_2\to X_1$ is a surjective isometry for every $y\in Y_2$. Let $x_1,x_2\in X_2$ and $y\in Y_2$  be arbitrary. Set $f:X_1\to {\mathbb C}$ by $f(x)=d_1(x,\varphi(x_2,y))$ for $x\in X_1$. Then $f\otimes 1\in \Lip(X_1,C(Y_1))$ and $L(f\otimes 1)=1$. Then we have
\begin{equation}\label{pisoLip2}
\begin{split}
d_1(\varphi(x_1,y),\varphi(x_2,y))
& = f(\varphi(x_1,y))=|f(\varphi(x_1,y))-f(\varphi(x_2,y))|
\\
& = |f\otimes 1(\varphi(x_1,y),\tau(y))-f\otimes 1(\varphi(x_2,y),\tau(y))|
\\
& = |(U(f\otimes 1))(x_1,y)-(U(f\otimes 1))(x_2,y)|
\\
& \le 
L(U(f\otimes 1))d_2(x_1,x_2).
\end{split}
\end{equation}
By \eqref{pisoLip1} the map $U$ is an isometry with respect to $\|\cdot\|_{\infty}$, thus $1=L(f\otimes 1)=L(U(f\otimes 1))$ since $U$ is an isometry for $\|\cdot\|=\|\cdot\|_\infty+L(\cdot)$. It follows by \eqref{pisoLip2} that $d_1(\varphi(x_1,y),\varphi(x_2,y))\le d_2(x_1,x_2)$. Since $U^{-1}$ is a surjective isometry we have by Corollary \ref{g} that there exists $h_1$, $\varphi_1$ and $\tau_1$ such that
\[
U^{-1}(G)(x,y)=h_1(y)G(\varphi_1(x,y),\tau_1(y)),\qquad (x,y)\in X_1\times Y_1
\]
for $G\in \Lip(X_2,C(Y_2))$. Then by a similar way as above we infer that $d_2(\varphi_1(x_1',y'),\varphi_1(x_2',y'))\le d_1(x_1',x_2')$ for every pair $x_1',x_2'\in X_1$ and $y'\in Y_1$. By a simple calculation we obtain that $x=\varphi_1(\varphi(x,y),\tau(y))$ for every $x\in X_2$ and $y\in Y_2$ (see a similar calculation in the proof of Theorem \ref{main} or that given on p.386 of \cite{ho}). Thus we have
\begin{multline*}
d_2(x_1,x_2)=d_2(\varphi_1(\varphi(x_1,y),\tau(y)),\varphi_1(\varphi(x_2,y),\tau(y))\\
\le
d_1(\varphi(x_1,y),\varphi(x_2,y)).
\end{multline*}
Therefore $d_2(x_1,x_2)= d_1(\varphi(x_1,y),\varphi(x_2,y))$ holds for every pair $x_1,x_2\in X_2$ and $y\in Y_2$, that is, $\varphi(\cdot,y)$ is an isometry for every $y\in Y_2$. 

Next we consider the case of $\lip(X_j,C(Y_j))$. Suppose that $0<\alpha<1$ and $U:\lip(X_1,C(Y_1))\to \lip(X_2,C(Y_2))$ is a surjective isometry. As in the same way as before there exists $h\in C(Y_2)$ with $|h|=1$ on $Y_2$, a continuous map $\varphi:X_2\times Y_2\to X_1$ such that $\varphi(\cdot,y):X_2\to X_1$ is a homeomorphism for every $y\in Y_2$, and a homeomorphism $\tau:Y_2\to Y_1$ which satisfy that
\begin{equation*}
U(F)(x,y)=h(y)F(\varphi(x,y),\tau(y)),\qquad (x,y)\in X_2\times Y_2
\end{equation*}
for every $F\in \lip(X_1,C(Y_1))$. We prove $\varphi(\cdot,y):X_2\to X_1$ is an isometry for every $y\in Y_2$. Let $x_1,x_2\in X_2$ and $y\in Y_2$ be arbitrary. Let $\beta$ with $\alpha<\beta<1$ be arbitrary. Set $f^\beta:X_1\to {\mathbb C}$ by $f^\beta(x)=d_1(x,\varphi(x_2,y))^\beta$. We have 
\begin{multline}\label{pisoLip3}
\frac{|f^\beta(s)-f^\beta(t)|}{d_1(s,t)^\alpha}=
\frac{|d_1(s,\varphi(x_2,y))^\beta-d_1(t,\varphi(x_2,y))^\beta|}{d_1(s,t)^\alpha}\\
\le 
\frac{d_1(s,t)^\beta}{d_1(s,t)^\alpha}=d_1(s,t)^{\beta-\alpha},\quad s,t\in X_1.
\end{multline}
Since $X_1$ is compact we have $\sup_{s,t\in X_1}d_1(s,t)<\infty$. Put $M=\sup_{s,t\in X_1}d_1(s,t)$. 
Then by \eqref{pisoLip3} we infer that $L_\alpha(f^\beta\otimes 1)\le M^{\beta-\alpha}$. 
We also infer by \eqref{pisoLip3} that $\lim_{s\to t}\frac{|f^\beta(s)-f^\beta(t)|}{d_1(s,t)^\alpha}=0$. Hence we have $f^\beta\otimes 1\in \lip(X_1,C(Y_1))$. We have, as before, 
\begin{equation}\label{pisoLip4}
\begin{split}
d_1(\varphi(x_1,y),\varphi(x_2,y))^\beta 
& = |f^\beta\otimes 1(\varphi(x_1,y),\tau(y))-f^\beta\otimes 1(\varphi(x_2,y),\tau(y))|
\\
& = |(U(f^\beta\otimes 1)(x_1,y)-(U(f^\beta\otimes 1)(x_2,y)|
\\
& \le L_\alpha(U(f^\beta\otimes 1))d_2(x_1,x_2)^\alpha
\\
& = L_\alpha(f^\beta\otimes 1)d_2(x_1,x_2)^\alpha
= M^{\beta-\alpha}d_2(x_1,x_2)^\alpha.
\end{split}
\end{equation}
Letting $\beta\to \alpha$ we have by \eqref{pisoLip4} that $d_1(\varphi(x_1,y),\varphi(x_2,y))^\alpha\le d_2(x_1,x_2)^\alpha$, hence $d_1(\varphi(x_1,y),\varphi(x_2,y))\le d_2(x_1,x_2)$. Applying the same argument for $U^{-1}$ as in the case of $\Lip(X_j,C(Y_j))$ we get
\[
d_2(x_1,x_2)^\beta\le M'^{\beta-\alpha}d_1(\varphi(x_1,y),\varphi(x_2,y))^\alpha
\]
for every $\beta$ with $\alpha<\beta<1$, where $M'=\sup_{s,t\in X_2}d_2(s,t)$. Letting $\beta\to \alpha$ we get $d_2(x_1,x_2)^\alpha \le d_1(\varphi(x_1,y),\varphi(x_2,y))^\alpha$ and $d_2(x_1,x_2) \le d_1(\varphi(x_1,y),\varphi(x_2,y))$. It follows that $d_2(x_1,x_2)=d_1(\varphi(x_1,y),\varphi(x_2,y))$ for every pair $x_1,x_2\in X_2$ and $y\in Y_2$, that is, $\varphi(\cdot,y)$ is an isometry for every $y\in Y_2$. 
\end{proof}
Note that if $Y_j$ is a singleton in Corollary \ref{isoLip}, then 
 $\Lip(X_j,C(Y_j))$ {\rm (}resp. $\lip(X_j,C(Y_j))${\rm )} is naturally identified with $\Lip(X_j)$ {\rm (}resp. $\lip(X_j)${\rm )}. 
Then Corollary \ref{isoLip} states that the statement of  Example 8 of \cite{jp} is indeed correct.
\begin{cor}\cite[Example 8]{jp} \label{JPOK}
The map $U:\Lip(X_1)\to \Lip(X_2)$ {\rm (}resp. $U:\lip(X_1)\to \lip(X_2)${\rm )} is a surjective isometry with respect to the norm $\|\cdot\|=\|\cdot\|_{\infty}+L(\cdot)$ {\rm (}resp. $\|\cdot\|=\|\cdot\|_{\infty}+L_\alpha(\cdot)${\rm )} if and only if there exists a complex number $c$ with the unit modulus and a surjective isometry $\varphi:X_2\to X_1$ such that
\[
U(F)(x)=cF(\varphi(x)), \qquad x\in X_2
\]
for every $F\in \Lip(X_1)$ {\rm (}resp. $F\in \lip(X_1)${\rm )}.
\end{cor}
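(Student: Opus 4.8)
The plan is to deduce Corollary~\ref{JPOK} directly from Corollary~\ref{isoLip} by taking each $Y_j$ to be a one-point space. If $Y_j=\{\ast_j\}$ is a singleton, then evaluation at $\ast_j$ identifies $C(Y_j)$ isometrically and isomorphically with ${\mathbb C}$, and consequently $\Lip(X_j,C(Y_j))$ (resp.\ $\lip(X_j,C(Y_j))$) is identified, as a Banach algebra with its sum norm, with $\Lip(X_j)$ (resp.\ $\lip(X_j)$): under this identification the supremum norm goes to the supremum norm, the Lipschitz number $L(\cdot)$ (resp.\ the $\alpha$-Lipschitz number $L_\alpha(\cdot)$) goes to $L(\cdot)$ (resp.\ $L_\alpha(\cdot)$), the little Lipschitz condition is preserved, and a map $U$ is a surjective isometry between the vector-valued algebras exactly when the corresponding map is a surjective isometry between the scalar-valued ones. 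First I would record this identification; it is routine but should be stated honestly, in particular the assertion that it respects the little Lipschitz condition.

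For the necessity, suppose $U\colon\Lip(X_1)\to\Lip(X_2)$ (resp.\ $U\colon\lip(X_1)\to\lip(X_2)$) is a surjective isometry. Viewing it, via the identification above, as a surjective isometry $\Lip(X_1,C(Y_1))\to\Lip(X_2,C(Y_2))$ (resp.\ $\lip(X_1,C(Y_1))\to\lip(X_2,C(Y_2))$) with $Y_j=\{\ast_j\}$, Corollary~\ref{isoLip} furnishes $h\in C(Y_2)$ with $|h|=1$, a continuous map $\varphi\colon X_2\times Y_2\to X_1$ with $\varphi(\cdot,y)\colon X_2\to X_1$ a surjective isometry for every $y\in Y_2$, and a homeomorphism $\tau\colon Y_2\to Y_1$, satisfying $U(F)(x,y)=h(y)F(\varphi(x,y),\tau(y))$. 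Because $Y_2=\{\ast_2\}$, the function $h$ is a single unimodular constant $c$; the homeomorphism $\tau\colon\{\ast_2\}\to\{\ast_1\}$ is forced and carries no information; and $x\mapsto\varphi(x,\ast_2)$ is a single surjective isometry $\varphi\colon X_2\to X_1$. Hence $U(F)(x)=cF(\varphi(x))$ for all $x\in X_2$, which is the asserted form.

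For the sufficiency, given a unimodular $c$ and a surjective isometry $\varphi\colon X_2\to X_1$, define $U$ by $U(F)(x)=cF(\varphi(x))$; one can either put $h\equiv c$, let $\tau$ be the unique map $Y_2\to Y_1$, and set $\varphi(x,y):=\varphi(x)$ in order to invoke the sufficiency half of Corollary~\ref{isoLip}, or check directly that composition with the isometry $\varphi$ preserves $\|\cdot\|_\infty$ and $L(\cdot)$ (resp.\ $L_\alpha(\cdot)$) and that $U$ is onto because $\varphi$ is bijective, noting in the little Lipschitz case that $F\mapsto F\circ\varphi$ preserves the little Lipschitz condition since $\varphi$ is an isometry. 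I do not anticipate a genuine obstacle here: all the substance is already contained in Corollary~\ref{isoLip} (hence in Theorem~\ref{main}), and the only real task is the careful bookkeeping of the singleton identification.
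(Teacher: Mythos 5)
Your proposal is correct and follows exactly the paper's own route: the paper also deduces Corollary \ref{JPOK} by taking each $Y_j$ to be a singleton, identifying $\Lip(X_j,C(Y_j))$ (resp. $\lip(X_j,C(Y_j))$) with $\Lip(X_j)$ (resp. $\lip(X_j)$), and invoking both directions of Corollary \ref{isoLip}. Your extra bookkeeping of the identification is just a more explicit version of the remark the paper states before the corollary.
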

\begin{proof}
Suppose that $U$ is a surjective isometry, then by Corollary \ref{isoLip} there exists a complex number $c$ with the unit modulus and a surjective isometry $\varphi:X_2\to X_1$ such that the desired equality holds.

Suppose that $c$ is a complex number with the unit modulus and $\varphi:X_2\to X_1$ is a surjective isometry. Then $U:\Lip(X_1) \to \Lip(X_2)$ (resp. $U:\lip(X_1)\to \lip(X_2)$) by $U(F)(x)=cF(\varphi(x))$, $x\in X_2$ for $F\in \Lip(X_1)$ (resp. $F\in \lip(X_1)$) is well defined. Then by Corollary \ref{isoLip} we have that $U$ is a surjective isometry. 
\end{proof}
\begin{example}\label{C101n}
Let $Y$ be a compact Hausdorff space. Then 
\[
([0,1], C(Y), C^1([0,1]), C^1([0,1],C(Y)))
\]
is an admissible quadruple of type L, where the norm of $f\in C^1([0,1])$ is defined by $\|f\|=\|f\|_\infty+\|f'\|_\infty$ and the norm of $F\in C^{1}([0,1],C(Y))$ is defined by $\|F\|=\|F\|_\infty+\|F'\|_\infty$. It is easy to see that $C^1([0,1])\otimes C(Y)\subset C^1([0,1],C(Y))$ and 
\[
\{F(\cdot, y):F\in C^1([0,1],C(Y)),\,\,y\in Y\}\subset C^1([0,1]).
\]
Let $\mathfrak{M}=[0,1]\times Y$ and $D:C^1([0,1],C(Y))\to C(\mathfrak{M})$ be defined by $D(F)(x,y)=F'(x,y)$ for $F\in C^1([0,1],C(Y))$. Then  $\|F'\|_\infty=\|D(F)\|_{\infty}$ for $F\in C^1([0,1],C(Y))$. Then the conditions from $\cno$ through $\cnt (3)$  of Definition \ref{aqL} are satisfied.
\end{example}
\begin{example}\label{C1T}
Let $Y$ be a compact Hausdorff space. Then 
\[
(\mathbb{T}, C(Y), C^1(\mathbb{T}), C^1(\mathbb{T},C(Y)))
\]
is an admissible quadruple of type L, where the norm of $f\in C^1(\mathbb{T})$ is defined by $\|f\|=\|f\|_\infty+\|f'\|_\infty$ and the norm of $F\in C^{1}(\mathbb{T},C(Y))$ is defined by $\|F\|=\|F\|_\infty+\|F'\|_\infty$. It is easy to see that $C^1(\mathbb{T})\otimes C(Y)\subset C^1(\mathbb{T},C(Y))$ and 
\[
\{F(\cdot, y):F\in C^1(\mathbb{T},C(Y)),\,\,y\in Y\}\subset C^1(\mathbb{T}).
\]
Let $\mathfrak{M}=\mathbb{T}\times Y$ and $D:C^1(\mathbb{T},C(Y))\to C(\mathfrak{M})$ be defined by $D(F)(x,y)=F'(x,y)$ for $F\in C^1(\mathbb{T},C(Y))$. Then $\|F'\|_\infty=\|D(F)\|_{\infty}$ for $F\in C^1(\mathbb{T},C(Y))$. Then the conditions from $\cno$ through $\cnt (3)$ of definition \ref{aqL} are satisfied for  $(\mathbb{T}, C(Y), C^1(\mathbb{T}), C^1(\mathbb{T},C(Y))$.
\end{example}

\begin{cor}\label{c101}
Let $Y_j$ be a compact Hausdorff space for $j=1,2$. The norm $\|F\|$ of $F\in C^{1}([0,1],C(Y_j))$ is defined by $\|F\|=\|F\|_\infty+\|F'\|_\infty$. Suppose that 
$U:C^1([0,1], C(Y_1))\to C^1([0,1],C(Y_2))$ is a map. Then $U$ is a surjective isometry if and only if 
there exists $h\in C(Y_2)$ such that $|h|=1$ on $Y_2$, a continuous map $\varphi:[0,1]\times Y_2\to [0,1]$ such that for each $y\in Y_2$ we have $\varphi(x,y)=x$ for every $x\in [0,1]$ or $\varphi(x,y)=1-x$ for every $x\in [0,1]$, and a homeomorphism $\tau:Y_2\to Y_1$ which satisfy that
\[
U(F)(x,y)=h(y)F(\varphi(x,y),\tau(y)),\qquad (x,y)\in [0,1]\times Y_2
\]
for every $F\in C^1([0,1],C(Y_1))$.
\end{cor}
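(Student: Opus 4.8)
The plan is to derive the necessity from Theorem~\ref{main} by recognizing the relevant quadruples as admissible of type~L (Example~\ref{C101n}), and then to promote the homeomorphisms $\varphi(\cdot,y)$ of $[0,1]$ to the affine involutions $x\mapsto x$ and $x\mapsto 1-x$ by exploiting that the sum norm on $C^1([0,1],C(Y))$ records the supremum norm of the derivative; the sufficiency is a direct verification. For the sufficiency, given $h$, $\varphi$, $\tau$ as in the statement and $U(F):=(1\otimes h)\bigl(F\circ(\varphi,\tau)\bigr)$, I would first check that $U(F)\in C^1([0,1],C(Y_2))$ whenever $F\in C^1([0,1],C(Y_1))$. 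Continuity of $y\mapsto\varphi(0,y)$ into the two-point set $\{0,1\}$ splits $Y_2$ into clopen pieces $Y_2^{+}=\{y:\varphi(0,y)=0\}$ and $Y_2^{-}=\{y:\varphi(0,y)=1\}$, on which $\varphi(x,y)=x$, respectively $\varphi(x,y)=1-x$; on each piece $U(F)$ is visibly continuously differentiable in $x$ with $(U(F))'(x,y)=\pm h(y)F'(\varphi(x,y),\tau(y))$, and the difference quotients converge uniformly because the defining limit for $F\in C^1([0,1],C(Y_1))$ is taken in the supremum norm over $Y_1$ and is unchanged by composition with the bijection $\tau$ and multiplication by the unimodular $h$. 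Since $(x,y)\mapsto(\varphi(x,y),\tau(y))$ is a bijection of $[0,1]\times Y_2$ onto $[0,1]\times Y_1$ and $|h|=1$, one gets $\|U(F)\|_\infty=\|F\|_\infty$ and $\|(U(F))'\|_\infty=\|F'\|_\infty$, hence $\|U(F)\|=\|F\|$; surjectivity follows because both admissible maps $x\mapsto x$ and $x\mapsto 1-x$ are involutions, so the analogous recipe built from $\overline{h}\circ\tau^{-1}$, $(u,v)\mapsto\varphi(u,\tau^{-1}(v))$ and $\tau^{-1}$ produces a preimage of any $G\in C^1([0,1],C(Y_2))$.

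For the necessity I would invoke Example~\ref{C101n}, by which $([0,1],C(Y_j),C^1([0,1]),C^1([0,1],C(Y_j)))$ is an admissible quadruple of type~L, and apply Theorem~\ref{main}: there are $h\in C(Y_2)$ with $|h|=1$, a continuous $\varphi:[0,1]\times Y_2\to[0,1]$ with each $\psi_y:=\varphi(\cdot,y)$ a homeomorphism of $[0,1]$, and a homeomorphism $\tau:Y_2\to Y_1$ with $U(F)(x,y)=h(y)F(\psi_y(x),\tau(y))$. Since $U^{-1}$ is again a surjective isometry of the same kind, Theorem~\ref{main} also applies to it; composing $U$ with $U^{-1}$ and using that $\wbo$ and $\wbt$ separate points (exactly as in the proof of Theorem~\ref{main}) yields, for each $v\in Y_1$, a homeomorphism $\chi_v$ of $[0,1]$ with $\chi_{\tau(y)}\circ\psi_y=\operatorname{id}_{[0,1]}$ for every $y\in Y_2$.

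Next I would extract the differential information. From $|h|=1$ and the fact that $(x,y)\mapsto(\psi_y(x),\tau(y))$ maps onto $[0,1]\times Y_1$ we get $\|U(F)\|_\infty=\|F\|_\infty$, so the sum-norm isometry of $U$ forces $\|(U(F))'\|_\infty=\|F'\|_\infty$ for every $F$. Taking $F=f\otimes 1$ with $f\in C^1([0,1])$, the relation $U(F)(x,y)=h(y)f(\psi_y(x))$ together with $U(F)\in C^1$ and $h(y)\ne 0$ shows $x\mapsto f(\psi_y(x))$ is $C^1$ on $[0,1]$; choosing $f(x)=x$ gives $\psi_y\in C^1([0,1])$, and then the chain rule gives $(U(F))'(x,y)=h(y)f'(\psi_y(x))\psi_y'(x)$. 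As $f'$ ranges over all of $C([0,1])$, the norm equality becomes
\[
\sup_{(x,y)\in[0,1]\times Y_2}|g(\psi_y(x))|\,|\psi_y'(x)|=\|g\|_{\infty([0,1])},\qquad g\in C([0,1]).
\]
Putting $g\equiv 1$ gives $|\psi_y'|\le 1$ on $[0,1]$ for every $y$. Running the same argument for $U^{-1}$ (which is likewise of this canonical form) gives $\chi_v\in C^1([0,1])$ and $|\chi_v'|\le 1$; differentiating $\chi_{\tau(y)}\circ\psi_y=\operatorname{id}$ then yields $\chi_{\tau(y)}'(\psi_y(x))\,\psi_y'(x)=1$, whence $|\psi_y'|\ge 1$. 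Therefore $|\psi_y'|\equiv 1$ on $[0,1]$; being continuous and nowhere zero on a connected interval, $\psi_y'$ is identically $1$ or identically $-1$, and integrating under the constraint $\psi_y([0,1])=[0,1]$ forces $\psi_y(x)=x$ for all $x$, or $\psi_y(x)=1-x$ for all $x$. This is exactly the claimed description of $\varphi$, and $U(F)(x,y)=h(y)F(\varphi(x,y),\tau(y))$ finishes the proof.

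I expect the essential obstacle to be the penultimate step: the sum-norm isometry applied to $U$ alone only delivers $|\psi_y'|\le 1$, which does not determine $\psi_y$, so one must also use that $U^{-1}$ has the canonical form and bring in the reflection identity $\chi_{\tau(y)}\circ\psi_y=\operatorname{id}$ to secure the reverse inequality $|\psi_y'|\ge 1$. The remaining steps---the clopen decomposition of $Y_2$ needed to confirm $U(F)\in C^1$ in the sufficiency part, and the chain-rule bookkeeping---are routine.
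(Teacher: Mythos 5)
Your argument is correct, and its overall skeleton (invoke Example~\ref{C101n} and Theorem~\ref{main}, show $\varphi(\cdot,y)$ is $C^1$ with $|\varphi'(\cdot,y)|\le 1$ from the norm identity $\|(U(F))'\|_\infty=\|F'\|_\infty$ applied to $F_0(x,y)=x$, force $|\varphi'(\cdot,y)|\equiv 1$, then integrate on the connected interval to get $x$ or $1-x$; direct verification plus the involution trick for sufficiency) matches the paper's proof. The one place where you genuinely diverge is the reverse inequality $|\varphi'(\cdot,y)|\ge 1$: you obtain it by applying Theorem~\ref{main} a second time to $U^{-1}$, extracting the composition identity $\varphi_1(\varphi(x,y),\tau(y))=x$ from point separation, and differentiating, exactly in the spirit of the paper's proof of Corollary~\ref{isoLip}. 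The paper instead gets it from $U$ alone: since $\varphi(\cdot,y_0)$ is a homeomorphism of $[0,1]$ onto itself, $1=|\varphi(1,y_0)-\varphi(0,y_0)|=\bigl|\int_0^1\varphi'(x,y_0)\,dx\bigr|\le\int_0^1|\varphi'(x,y_0)|\,dx$, and a point where $|\varphi'|<1$ would make the last integral strictly less than $1$. The paper's route is more economical (no second appeal to the main theorem, no inverse bookkeeping), while yours trades that for a chain-rule computation and has the mild advantage of running in parallel with the Lipschitz case; both are sound. Your sufficiency direction is also fine and in fact slightly more careful than the paper's, which leaves the clopen decomposition of $Y_2$ (needed to see $U(F)\in C^1([0,1],C(Y_2))$) and the membership $G\in C^1([0,1],C(Y_1))$ of the constructed preimage as routine checks.
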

\begin{proof}
Suppose that $U:C^1([0,1], C(Y_1))\to C^1([0,1],C(Y_2))$ is a surjective isometry. Then by Theorem \ref{main} there exists
 $h\in C(Y_2)$ such that $|h|=1$ on $Y_2$, a continuous map $\varphi:[0,1]\times Y_1\to [0,1]$ such that $\varphi(\cdot,y):[0,1]\to [0,1]$ is a homeomorphism for each $y\in Y_2$, and a homeomorphism $\tau:Y_2\to Y_1$ which satisfy
\begin{equation}\label{c1teq1}
U(F)(x,y)=h(y)F(\varphi(x,y),\tau(y)),\qquad (x,y)\in [0,1] \times Y_2
\end{equation}
for every $F\in C^{1}([0,1],C(Y_1))$. We only need to prove that, for every $y\in Y_2$ $\varphi(x,y)=x$ for every $x\in [0,1]$ or $\varphi(x,y)=1-x$ for every $x\in [0,1]$. Let $F_0\in C^1([0,1],C(Y_1))$ be defined by $F_0(x,y)=x$ for every $(x,y)\in [0,1]\times Y_1$. Then we have $F_0'=1$ on $[0,1]\times Y_1$ and $\|F_0\|=\|F_0\|_\infty+\|F_0'\|_\infty=2$. By \eqref{c1teq1} we have $U(F_0)(x,y)=h(y)\varphi(x,y)$ for every $(x,y)\in [0,1]\times Y_2$. 
Since $U(F_0)$ is continuously differentiable we infer that $\varphi$ is continuously differentiable and that $U(F_0)'(x,y)=h(y)\varphi'(x,y)$ for every $(x,y)\in [0,1]\times Y_2$. By \eqref{c1teq1} we infer that $\|U(F_0)\|_\infty=\|F_0\|_\infty$, hence $\|U(F_0)'\|_\infty=\|F_0'\|_\infty$ since $U$ is an isometry with respect to $\|\cdot\|$. As $|h|=1$ on $Y_2$ we see that 
\[
|\varphi'(x,y)|\le \|U(F_0)'\|_\infty=\|F_0'\|_\infty=1
\]
for every $(x,y)\in [0,1]\times Y_2$. We prove that $|\varphi'(x,y)|=1$ for every $(x,y)\in [0,1]\times Y_2$. Suppose contrary that there exists $(x_0,y_0)\in [0,1]\times Y_2$ with $|\varphi'(x_0,y_0)|<1$. As $\varphi(\cdot,y_0):[0,1]\to [0,1]$ is a homeomorphism we infer that $|\varphi(1,y_0)-\varphi(0,y_0)|=1$. As $\varphi(\cdot,y_0)$ is continuously differentiable we have
\[
1=|\varphi(1,y_0)-\varphi(0,y_0)|=|\int^1_0\varphi'(x,y_0)dx|\le
\int^1_0|\varphi'(x,y_0)|dx.
\]
Since $\varphi'$ is continuous and $|\varphi'|\le 1$ on $[0,1]\times Y_1$, and $|\varphi'(x_0,y_0)|<1$ we have 
\[
\int^1_0|\varphi'(x,y_0)|dx<1,
\]
which is a contradiction. Hence we have that $|\varphi'(x,y)|=1$ for every $(x,y)\in [0,1]\times Y_2$. Let $y_1\in Y_2$ be arbitrary. As $\varphi'(\cdot,y_1)$ is continuous on $[0,1]$ and $|\varphi'(\cdot,y_1)|=1$ on $[0,1]$ we have that 
$\varphi'(\cdot,y_1)=1$ on $[0,1]$ or $\varphi'(\cdot,y_1)=-1$ on $[0,1]$ since $\varphi'$ is real-valued with $|\varphi'|=1$ on a connected space $[0,1]$. It follows by a simple calculation that $\varphi(x,y_1)=x$ for every $x\in [0,1]$ or $\varphi(x,y_1)=1-x$ for every $x\in [0,1]$ since $\varphi(\cdot,y_1)$ is a bijection between $[0,1]$.

Suppose conversely that there exists $h\in C(Y_2)$ such that $|h|=1$ on $Y_2$, a continuous map $\varphi:[0,1]\times Y_2\to [0,1]$ such that for each $y\in Y_2$ $\varphi(x,y)=x$ for every $x\in [0,1]$ or $\varphi(x,y)=1-x$ for every $x\in [0,1]$, and a homeomorphism $\tau:Y_2\to Y_1$ which satisfy that
\[
U(F)(x,y)=h(y)F(\varphi(x,y),\tau(y)),\qquad (x,y)\in [0,1]\times Y_2
\]
for every $F\in C^1([0,1],C(Y_1))$. It is straightforward to check that $\|U(F)\|_\infty = \|F\|_\infty$. Let $y\in Y_2$ be arbitrary. By a simple calculation we infer that $|U(F)'(x,y)|=|F'(x,\tau(y))|$ for every $x\in [0,1]$ or $|U(F)'(x,y)|=|F'(1-x,\tau(y))|$ for every $x\in [0,1]$ for each $y\in Y_2$ and $F\in C^1([0,1],C(Y_1))$. As $\tau$ is a surjection, we have $\|U(F)'\|_\infty=\|F'\|_\infty$ for every $F\in C^1([0,1],C(Y_1))$. 
To prove that $U$ is surjective, let $F\in C^1([0,1],C(Y_2))$ be an arbitrary map. Put $G(x',y')=\overline{h(\tau^{-1}(y'))}F(\varphi(x',\tau^{-1}(y')),\tau^{-1}(y'))$, $(x',y')\in [0,1]\times Y_1$. It is easy to see that $G\in C^1([0,1],C(Y_1))$. As $\varphi(x,y)=x$ or $1-x$ depending on $y\in Y_2$ we see by a simple calculation that $\varphi(\varphi(x,y),y)=x$ for every $(x,y)\in [0,1]\times Y_2$. Then we have
\begin{multline*}
(U(G))(x,y)=h(y)G(\varphi(x,y),\tau(y))
\\
=
h(y)\overline{h(\tau^{-1}(\tau(y)))}F(\varphi(\varphi(x,y)),\tau^{-1}(\tau(y)),\tau^{-1}(\tau(y)))
\\
=F(\varphi(\varphi(x,y)),y)=F(x,y),\quad (x,y)\in [0,1]\times Y_2
\end{multline*}
It follows that $U$ is a surjective isometry from $C^1([0,1],C(Y_1))$ onto $C^1([0,1],C(Y_2))$.
\end{proof}
Note that if $Y_j$ is a singleton in Corollary \ref{c101}, then $C^1([0,1],C(Y_j))$ is $C^1([0,1],{\mathbb C})$. The corresponding result on isometries was given by Rao and Roy \cite{rr}.
\begin{cor}\label{c1t}
Let $Y_j$ be a compact Hausdorff space for $j=1,2$. The norm $\|F\|$ of $F\in C^{1}(\mathbb{T},C(Y_j))$ is defined by $\|F\|=\|F\|_\infty+\|F'\|_\infty$. Suppose that 
$U:C^1(\mathbb{T}, C(Y_1))\to C^1(\mathbb{T},C(Y_2))$ is a map. Then $U$ is a surjective isometry if and only if 
there exists $h\in C(Y_2)$ such that $|h|=1$ on $Y_2$, a continuous map $\varphi:\mathbb{T}\times Y_2\to \mathbb{T}$ and a continuous map $u:Y_2\to \mathbb{T}$ such that for every $y\in Y_2$ $\varphi(z,y)=u(y)z$ for every $z\in \mathbb{T}$ or $\varphi(z,y)=u(y)\bar{z}$ for every $z\in \mathbb{T}$, and a homeomorphism $\tau:Y_2\to Y_1$ which satisfy that
\[
U(F)(z,y)=h(y)F(\varphi(z,y),\tau(y)),\qquad (z,y)\in \mathbb{T}\times Y_2
\]
for every $F\in C^1(\mathbb{T},C(Y_1))$.
\end{cor}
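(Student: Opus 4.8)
The plan is to follow the proof of Corollary~\ref{c101}, replacing the connectedness argument available on $[0,1]$ by a winding-number computation adapted to $\mathbb{T}$. For the forward implication: by Example~\ref{C1T} the quadruple $(\mathbb{T},C(Y_j),C^1(\mathbb{T}),C^1(\mathbb{T},C(Y_j)))$ is an admissible quadruple of type L, so if $U$ is a surjective isometry, Theorem~\ref{main} furnishes $h\in C(Y_2)$ with $|h|=1$ on $Y_2$, a continuous $\varphi:\mathbb{T}\times Y_2\to\mathbb{T}$ with $\varphi(\cdot,y)$ a homeomorphism of $\mathbb{T}$ for each $y$, and a homeomorphism $\tau:Y_2\to Y_1$ with $U(F)(z,y)=h(y)F(\varphi(z,y),\tau(y))$. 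Everything then reduces to identifying the shape of $\varphi(\cdot,y)$.

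The key step is a rigidity argument. Apply $U$ to $F_0(z,y)=z$; then $F_0'=1$, $\|F_0\|_\infty=1$, and $U(F_0)(z,y)=h(y)\varphi(z,y)$ lies in $C^1(\mathbb{T},C(Y_2))$, so $\varphi(\cdot,y)$ is continuously differentiable and, $h$ being independent of $z$, $\varphi'(z,y)=\overline{h(y)}\,U(F_0)'(z,y)$ is jointly continuous on $\mathbb{T}\times Y_2$. Since $U$ preserves $\|\cdot\|_\infty$ (because $|h|=1$ and both $\varphi(\cdot,y)$ and $\tau$ are onto) as well as $\|\cdot\|$, we get $\|U(F_0)'\|_\infty=\|F_0'\|_\infty=1$, whence $|\varphi'(z,y)|\le 1$ everywhere. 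Now fix $y$ and lift: write $\varphi(e^{i\theta},y)=e^{i\sigma(\theta)}$ with $\sigma:\mathbb{R}\to\mathbb{R}$ of class $C^1$; then $\sigma(\theta+2\pi)=\sigma(\theta)+2\pi\varepsilon$ where $\varepsilon\in\{1,-1\}$ is the degree of the circle homeomorphism $\varphi(\cdot,y)$, and differentiating together with the chain-rule identity that $\frac{d}{d\theta}\varphi(e^{i\theta},y)$ equals $\varphi'(e^{i\theta},y)\,ie^{i\theta}$ gives $|\varphi'(e^{i\theta},y)|=|\sigma'(\theta)|$. Hence $2\pi=\big|\int_0^{2\pi}\sigma'\,d\theta\big|\le\int_0^{2\pi}|\sigma'|\,d\theta=\int_0^{2\pi}|\varphi'(e^{i\theta},y)|\,d\theta\le 2\pi$, which forces $|\sigma'|\equiv 1$, and since $\sigma'$ is continuous with $\int_0^{2\pi}\sigma'=2\pi\varepsilon$ this forces $\sigma'\equiv\varepsilon$, so $\sigma(\theta)=\varepsilon\theta+c$. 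Thus $\varphi(z,y)=u(y)z$ when $\varepsilon=1$ and $\varphi(z,y)=u(y)\bar z$ when $\varepsilon=-1$, with $u(y)=\varphi(1,y)$; since $\varphi$ is continuous, $u:Y_2\to\mathbb{T}$ is continuous, which is the asserted form.

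For the converse, assume $U$ has the stated form. Then $\|U(F)\|_\infty=\|F\|_\infty$ is immediate since $(z,y)\mapsto(\varphi(z,y),\tau(y))$ is onto $\mathbb{T}\times Y_1$. For fixed $y$, the map $z\mapsto u(y)z$ has derivative the unimodular constant $u(y)$, while $z\mapsto u(y)\bar z$ has derivative $-u(y)\bar z^2$, again unimodular, so a routine chain-rule check (parallel to the one in Corollary~\ref{c101}) shows $U(F)$ is continuously differentiable with $|U(F)'(z,y)|=|F'(\varphi(z,y),\tau(y))|$; as $\varphi(\cdot,y)$ is a bijection of $\mathbb{T}$ and $\tau$ is onto, $\|U(F)'\|_\infty=\|F'\|_\infty$ and therefore $\|U(F)\|=\|F\|$. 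For surjectivity, given $G\in C^1(\mathbb{T},C(Y_2))$ set $F(w,v)=\overline{h(\tau^{-1}(v))}\,G(\psi_v(w),\tau^{-1}(v))$, where $\psi_v$ is the inverse of $\varphi(\cdot,\tau^{-1}(v))$; since that inverse is again of the form $w\mapsto\overline{u(\tau^{-1}(v))}w$ or $w\mapsto u(\tau^{-1}(v))\bar w$, one checks $F\in C^1(\mathbb{T},C(Y_1))$ and $U(F)=G$.

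The main obstacle is the rigidity step: on $[0,1]$ it sufficed that a continuous unimodular real function on a connected space is constant, but on $\mathbb{T}$ the bound $|\varphi'|\le 1$ by itself does not force $|\varphi'|=1$; one must exploit that $\varphi(\cdot,y)$ wraps around the circle exactly once, which is precisely what the winding-number integral records. Care is also needed to track joint continuity of $\varphi'$ in $(z,y)$ so that $u$ comes out continuous, and to remember that, unlike the involutions $x\mapsto x$ and $x\mapsto 1-x$ on $[0,1]$, a rotation of $\mathbb{T}$ is not its own inverse, so surjectivity in the converse genuinely requires inverting $\varphi(\cdot,y)$.
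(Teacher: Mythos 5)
Your proposal is correct and follows essentially the same route as the paper: reduction via Theorem \ref{main} and Example \ref{C1T}, the test function $F_0(z,y)=z$ giving $\varphi=\bar h\,U(F_0)\in C^1(\mathbb{T},C(Y_2))$ with $|\varphi'|\le 1$, and then the rigidity step via the one-period integral of the derivative of a lift, which is exactly the paper's argument with the lift written explicitly as $\mathcal{L}(t)=-i\Log\overline{u(y)}\varphi(e^{it},y)$ instead of your $\sigma$. The converse is the same routine verification; you merely spell out the surjectivity check (inverting $\varphi(\cdot,y)$ as $w\mapsto \overline{u}w$ or $w\mapsto u\bar w$) that the paper leaves to the reader.
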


\begin{proof}
Suppose that $U:C^1(\mathbb{T}, C(Y_1))\to C^1(\mathbb{T},C(Y_2))$ is a surjective isometry. Then by Theorem \ref{main} there exists $h\in C(Y_2)$ such that $|h|=1$ on $Y_2$, a continuous map $\varphi:\mathbb{T}\times Y_1\to \mathbb{T}$ such that $\varphi(\cdot,y):\mathbb{T}\to \mathbb{T}$ is a homeomorphism for each $y\in Y_2$, and a homeomorphism $\tau:Y_2\to Y_1$ which satisfy
\begin{equation}\label{c1teq1.5}
U(F)(z,y)=h(y)F(\varphi(z,y),\tau(y)),\qquad (z,y)\in \mathbb{T}\times Y_2
\end{equation}
for every $F\in C^{1}(\mathbb{T},C(Y_1))$. We prove that for every $y\in Y_2$ there corresponds $u(y)\in \mathbb{T}$ such that $\varphi(z,y)=u(y)z$ for every $z\in \mathbb{T}$ or $\varphi(z,y)=u(y)\bar{z}$ for every $z\in \mathbb{T}$. Let $F_0\in C^1(\mathbb{T},C(Y_1))$ be defined as $F_0(z,y)=z$ for every $(z,y)\in \mathbb{T}\times Y_1$. Then by \eqref{c1teq1.5} we have $U(F_0)(z,y)=h(y)\varphi(z,y)$. As $|h|=1$ on $Y_2$ we have that $\varphi=\bar{h}U(F_0)\in C^1(\mathbb{T},C(Y_2))$. We also have $\|F_0\|_\infty=1$ and $\|F_0'\|_\infty=1$, hence $\|F_0\|=2$. By \eqref{c1teq1.5} we have $\|U(F_0)\|_\infty=1$. Since $\|U(F_0)\|=\|F_0\|$, we infer that $\|U(F_0)'\|_\infty=\|F_0'\|_\infty$, where
\[
U(F_0)'(z,y)
=h(y)\varphi'(z,y), \quad (z,y)\in \mathbb{T}\times Y_2
\]
as $U(F_0)=h\varphi$. Thus 
\[
\|\varphi'\|_\infty=\|U(F_0)'\|_\infty=\|F_0'\|_\infty=1.
\]
It follows that $|\varphi'(z,y)|\le 1$ for every $(z,y)\in \mathbb{T}\times Y_2$. Define $u:Y_2\to {\mathbb T}$ by $u(y)=\varphi(1,y)$. Then $u$ is continuous since $\varphi$ is continuous on ${\mathbb T}\times Y_2$. We also have that $|u(y)|=|\varphi(1,y)|=1$. As $\varphi(\cdot,y)$ is a bijection from $\mathbb{T}$ onto itself, we have $\varphi(\mathbb{T}\setminus\{1\},y)=\mathbb{T}\setminus\{u(y)\}$. Hence the map
\[
t\mapsto -i\Log \overline{u(y)}\varphi(e^{it},y)
\]
is well defined from $(0,2\pi)$ onto $(0,2\pi)$, where $\Log$ denotes the principal value of the logarithm. As $\varphi(\cdot,y)$ is continuously differentiable, the above map has a natural extension $\mathcal{L}:[0,2\pi]\to [0,2\pi]$ (defining by $\mathcal{L}(0)=0$ and $\mathcal{L}(2\pi)=2\pi$, or $\mathcal{L}(0)=2\pi$ and $\mathcal{L}(2\pi)=0$, $\mathcal{L}(t)=-i\Log \overline{u(y)}\varphi(e^{it},y)$ for $0<t<2\pi$), which is continuously differentiable. By a simple calculation we have
\[
\mathcal{L}'(t)=\frac{\varphi'(e^{it},y)e^{it}}{\varphi(e^{it},y)}, \quad t\in [0,2\pi].
\]
Hence $|\mathcal{L}'(t)|\le 1$ for every $t\in [0,2\pi]$ since $|\varphi'(z,y)|\le 1$ for every $(z,y)\in \mathbb{T}\times Y_2$. In the way as in the proof of Corollary \ref{c101} we have that $\mathcal{L}'=1$ on $[0,2\pi]$ or $\mathcal{L}'=-1$ on $[0,2\pi]$. It follows that $\overline{u(y)}\varphi(e^{it},y)=e^{it}$ for every $t\in [0,2\pi]$ or $\overline{u(y)}\varphi(e^{it},y)=e^{-it}$ for every $t\in [0,2\pi]$. Hence $\varphi(z,y)=u(y)z$ for every $z\in \mathbb{T}$ or $\varphi(z,y)=u(y)\bar{z}$ for every $z\in \mathbb{T}$. 

Suppose conversely that there exists $h\in C(Y_2)$ such that $|h|=1$ on $Y_2$, a continuous map $\varphi:\mathbb{T}\times Y_2\to \mathbb{T}$ and a continuous map $u:Y_2\to \mathbb{T}$ such that $\varphi(z,y)=u(y)z$ for every $z\in \mathbb{T}$ or $\varphi(z,y)=u(y)\bar{z}$ for every $z\in \mathbb{T}$, and a homeomorphism $\tau:Y_2\to Y_1$ which satisfy that
\begin{equation}\label{c1teq2}
U(F)(z,y)=h(y)F(\varphi(z,y),\tau(y)),\qquad (z,y)\in \mathbb{T}\times Y_2
\end{equation}
for every $F\in C^1(\mathbb{T},C(Y_1))$. By the hypotheses on $\varphi$ and $\tau$ we infer that $(z,y)\mapsto (\varphi(z,y),\tau(y))$ gives a homeomorphism from $\mathbb{T}\times Y_2$ onto $\mathbb{T}\times Y_1$. As $|h|=1$ on $Y_2$ we infer that $\|F\|_\infty=\|U(F)\|_\infty$ for every $F\in C^1(\mathbb{T},C(Y_1))$. By \eqref{c1teq2} we have
\[
U(F)'(z,y)=h(y)F'(\varphi(z,y),\tau(y))\varphi'(z,y),\quad (z,y)\in \mathbb{T}\times Y_2
\]
for every $F\in C^1(\mathbb{T},C(Y_1))$. As $\varphi'(z,y)=u(y)$ on $\mathbb{T}\times Y_2$ or $\varphi'(z,y)=-u(y)\bar{z}^2$ on $\mathbb{T}\times Y_2$ we infer that
\[
\|U(F)'\|_\infty=\|hF'(\varphi,\tau)\varphi'\|_\infty=\|F'\|_\infty.
\]
It follows that $U$ is an isometry. It is not difficult to prove that $U$ is a surjection. We conclude that $U$ is a surjective isometry.
\end{proof}
\subsection*{Acknowledgements}
The authors record our sincerest appreciation to the two referees for their valuable comments and advice which have improved the presentation of this paper substantially.
The first author was partially supported by JSPS KAKENHI Grant Numbers JP16K05172 (representative), JP15K04921 (sharer), JP15K04897 (sharer), Japan Society for the Promotion of Science. 


\begin{thebibliography}{99}
\bibitem{araduba}
J.~Araujo and L.~Dubarbie,
\emph{
Noncompactness and noncompleteness in isometries of Lipschitz spaces},
J. Math. Anal. Appl.
{\bf 377} (2011), 15--29.

\bibitem{bfj}
F.~Botelho, R.~J.~Fleming and J.~E.~Jamison,
\emph{
Extreme points and isometries on vector-valued Lipschitz spaces},
J. Math. Anal. Appl. 
{\bf 381} (2011), 821--832.

\bibitem{bjStudia2009}
F.~Botelho and J.~Jamison,
\emph{
Surjective isometries on spaces of differentiable vector-valued functions},
Studia Math.
{\bf 192} (2009), 39--50.

\bibitem{bjRocky}
F.~Botelho and J.~Jamison,
\emph{
Homomorphisms on a class of commutative Banach algebras},
Rocky Mountain J. Math.
{\bf 43} (2013), 395--416.

\bibitem{bjPositivity17}
F.~Botelho and J.~Jamison,
\emph{
Surjective isometries on spaces of vector valued continuous and Lipschitz functions},
Positivity 
{\bf 17} (2013), 395--405; 
\emph{
Erratum to: Surjective isometries on spaces of vector valued continuous and Lipschitz functions} by F.~Botelho, 
{\bf 20} (2016), 757--759.

\bibitem{bjz}
F.~Botelho, J.~Jamison and B.~Zheng,
\emph{
Isometries on spaces of vector valued Lipschitz functions},
{\bf 17} (2013), 47--65.

\bibitem{c}
M.~Cambern,
\emph{
Isometries of certain Banach algebras},
 Studia Math. 
{\bf 25} (1964/1965),  217--225.

\bibitem{dl}
K.~de Leeuw,
\emph{
Banach spaces of Lipschitz functions},
Studia Math.
{\bf 21} (1961), 55--66.

\bibitem{fj1}
R.~J.~Fleming and J.~E.~Jamison,
\emph{
Isometries on Banach spaces: function spaces},
 Chapman \& Hall/CRC Monographs and Surveys in Pure and Applied Mathematics, 129. Chapman \& Hall/CRC, Boca Raton, FL, 2003. x+197 pp. ISBN: 1-58488-040-6.

\bibitem{fj2}
R.~J.~Fleming and J.~E.~Jamison,
\emph{
Isometries on Banach spaces. Vol. 2. Vector-valued function spaces},
 Chapman \& Hall/CRC Monographs and Surveys in Pure and Applied Mathematics, 138. Chapman \& Hall/CRC, Boca Raton, FL, 2008. x+234 pp. ISBN: 978-1-58488-386-9; 1-58488-386-3.

\bibitem{ho}
O.~Hatori and S.~Oi,
\emph{
Hermitian operators on Banach algebras of vector-valued Lipschitz maps},
J. Math. Anal. Appl. 
{\bf 452} (2017), 378--387;
\emph{Corrigendum to "Hermitian operators on Banach algebras of vector-valued Lipschitz maps'' [J. Math. Anal. Appl. 452 (2017) 378--387] [MR3628025].
}
 J. Math. Anal. Appl. 
{\bf 453} (2017), 1151--1152.

\bibitem{hots}
O.~Hatori, S.~Oi and H.~Takagi,
\emph{
Peculiar homomorphisms on algebras of vector valued  maps},
Studia Math.
Online first, DOI: 10.4064/sm8799-6-2017

\bibitem{ja}
K.~Jarosz, 
\emph{Isometries in semisimple, commutative Banach algebras}, Proc. Amer. Math. Soc. 
{\bf 94} (1985), 65--71.

\bibitem{jp}
K.~Jarosz and V.~D.~Pathak,
\emph{
Isometries between function spaces},
Trans. Amer. Math. Soc.
{\bf 305} (1988), 193--206.

\bibitem{jlp}
A.~Jim\'enez-Vargas, L.~Li, A.~M.~Peralta, L.~Wang and Y.-S~Wang,
\emph{
2-local standard isometries on vector-valued Lipschitz function spaces},
arXiv:1708.0289v1, 9 Aug 2017.

\bibitem{amHouston}
A.~Jim\'enez-Vargas and M.~Villegas-Vallecillos,
\emph{
Into linear isometries between spaces of Lipschitz functions},
Houston Journal Math. 
{\bf 34} (2008), 1165--1184.

\bibitem{amPAMS}
A.~Jim\'enez-Vargas and M.~Villegas-Vallecillos,
\emph{
Linear isometries between spaces of vector-valued Lipschitz functions},
 Proc. Amer. Math. Soc. 
{\bf137} (2009), 1381--1388.

\bibitem{amy}
A.~Jim\'enez-Vargas, M.~Villegas-Vallecillos and Y.-S.~Wang,
\emph{
Banach--Stone theorems for vector-valued little Lipschitz functions},
 Publ. Math. Debrecen 
{\bf 74} (2009),  81--100.

\bibitem{kawar}
K.~Kawamura,
\emph{
Banach--Stone type theorem for $C^1$-function spaces over Riemannian manifolds},
to appear in Acta Sc. Math. (Szeged).


\bibitem{kc1}
K.~Kawamura,
\emph{
Perturbations of norms on $C^1$-function spaces and associated isometry groups},
to appear in Top. Proc.

\bibitem{kc12}
K.~Kawamura,
\emph{
A Banach--Stone type theorem for $C^1$-function spaces over the circle}
submitted, 2017.

\bibitem{kkm}
K.~Kawamura, H.~Koshimizu and T.~Miura,
\emph{
Norms on $C^1([0,1])$ and there isometries},
preprint, 2017.

\bibitem{kos}
H.~Koshimizu,
\emph{
Linear isometries on spaces of continuously differentiable and Lipschitz continuous functions},
Nihonkai Math. J. 
{\bf 22} (2011), 73--90.

\bibitem{lcmw}
L.~Li, D.~Chen, Q.~Meng and Y.-S.~Wang,
\emph{
Surjective isometries on vector-valued differentiable function spaces},
preprint 2017.

\bibitem{lpww}
L.~Li, A.~M.~Peralta, L.~Wang and Y.-S.~Wang,
\emph{
Weak-2-local isometries on uniform algebras and Lipschitz algebras},
arXiv:1705.03619v1, 10 May 2017.

\bibitem{mw}
E.~Mayer-Wolf,
\emph{
Isometries between Banach spaces of Lipschitz functions},
Israel J. Math.
{\bf 38} (1981), 58--74.


\bibitem{mt}
T.~Miura and H.~Takagi,
\emph{
Surjective isometries on the Banach space of continuously differentiable functions},
Contemp. Math. 
{\bf 687} (2017), 181--192.

\bibitem{no}
A.~Nikou and A.~G.~O'Farrell,
\emph{
Banach algebras of vector-valued functions},
Glasgow Math. J.
{\bf 56} (2014), 419--426.

\bibitem{pal}
T.~W.~Palmer,
\emph{
Banach algebras and the general theory of $*$-algebras. Vol. 1. Algebras and Banach algebras},
Encyclopedia of Mathematics and its Applications, 
Cambridge University Press, Cambridge, 1994.


\bibitem{ph}
R.~R.~Phelps, 
\emph{
Lectures on Choquet's Theorem. Second edition},
Lecture Notes in Mathematics, 1757. Springer-Verlag, Berlin, 2001. viii+124 pp. ISBN: 3-540-41834-2.

\bibitem{rm}
A.~Ranjbar-Motlagh,
\emph{
A note on isometries of Lipschitz spaces},
J. Math. Anal. Appl. 
{\bf 411} (2014), 555--558.

\bibitem{rr}
N.~V.~Rao and A.~K.~Roy,
\emph{
Linear isometries of some function spaces},
Pacific J. Math. 
{\bf 38} (1971), 177--192.

\bibitem{roy}
A.~K.~Roy,
\emph{
Extreme points and linear isometries of the Banach space of Lipschitz functions},
Canad. J. Math. 
{\bf 20} (1968), 1150--1164.


\bibitem{wea}
N.~Weaver,
\emph{
Isometries of noncompact Lipschitz spaces},
Canad. Math. Bull.
{\bf 38} (1995), 242--249.
\end{thebibliography}
\end{document}